\newtheorem{theorem}{Theorem}[section]
\newtheorem{maintheorem}[theorem]{Main Theorem}
\newtheorem{lemma}[theorem]{Lemma}
\newtheorem{proposition}[theorem]{Proposition}
\newtheorem{corollary}[theorem]{Corollary}
\theoremstyle{definition}
\newtheorem{definition}[theorem]{Definition}
\newtheorem{remark}[theorem]{Remark}
\newtheorem{assumption}[theorem]{Assumption}
\newcommand{\IR}{\mathbb{R}}
\newcommand{\IC}{\mathbb{C}}
\newcommand{\IN}{\mathbb{N}}
\newcommand{\M}{\mathcal{M}}
\newcommand{\Lop}{{\mathcal{L}}}
\newcommand{\loc}{\mathrm{loc}}
\newcommand{\V}{\mathcal{V}}
\renewcommand{\a}{\mathfrak{a}}
\renewcommand{\j}{\mathfrak{j}}
\newcommand{\J}{\mathfrak{J}}
\newcommand{\SysA}{\mathbb{A}}
\newcommand{\SysV}{\mathbb{V}}
\renewcommand{\L}{\mathrm{L}}
\renewcommand{\H}{\mathrm{H}}
\newcommand{\F}{\mathrm{F}}
\newcommand{\C}{\mathrm{C}}
\newcommand{\m}{{\mathrm{m}}}
\newcommand{\R}{{\mathfrak{R}}}
\newcommand{\E}{{\mathfrak{E}}}
\renewcommand{\P}{{\mathfrak{P}}}
\newcommand{\f}{{\mathbf{f}}}
\newcommand{\g}{{\mathbf{g}}}
\newcommand{\bm}{{\mathbf{m}}}
\newcommand{\bdist}{{\mathbf{d}}}
\newcommand{\x}{{\mathbf{x}}}
\newcommand{\y}{{\mathbf{y}}}
\newcommand{\B}{{\mathbf{B}}}
\newcommand{\Dcyl}{\Omega \hspace{-2pt} \uparrow \hspace{-2.5pt} D}
\renewcommand{\d}{\mathrm{d}}
\newcommand{\eps}{\varepsilon}
\newcommand{\ind}{{\mathbf{1}}}
\newcommand{\argdot}{\, \cdot \,}
\newcommand{\abs}[1]{\lvert#1\rvert}
\newcommand{\cl}[1]{\overline{#1}}
\newcommand{\bd}{\partial}
\DeclareMathOperator{\supp}{supp}
\DeclareMathOperator{\dist}{d}
\DeclareMathOperator{\diam}{diam}
\DeclareMathOperator{\sgn}{sgn}
\renewcommand\Re{\operatorname{Re}}
\DeclareMathOperator{\Id}{Id}
\DeclareMathOperator{\Rg}{Rg}
\DeclareMathOperator{\Ke}{ker}
\DeclareMathOperator{\dom}{D}
\numberwithin{equation}{section}
\title{The Kato Square Root Problem for Mixed Boundary Conditions}
\author{Moritz Egert, Robert Haller-Dintelmann, and Patrick Tolksdorf}
\address{Fachbereich Mathematik, Technische Universit\"at Darmstadt, Schlossgartenstr. 7, 64289 Darmstadt, Germany}
\email{egert@mathematik.tu-darmstadt.de}
\email{haller@mathematik.tu-darmstadt.de}
\email{tolksdorf@mathematik.tu-darmstadt.de}
\thanks{The first and the third author were supported by ``Studienstiftung des deutschen Volkes''.}
\subjclass[2010]{35J57, 46E35, 35J05}
\date{November 18, 2014}
\dedicatory{}
\keywords{Kato's square root problem, mixed boundary conditions, interpolation, fractional Hardy inequalities}
\begin{document}
\begin{abstract}
 We consider the negative Laplacian subject to mixed boundary conditions on a bounded domain. We prove under very general geometric assumptions that slightly above the critical exponent $\frac{1}{2}$ its fractional power domains still coincide with suitable Sobolev spaces of optimal regularity. In combination with a reduction theorem recently obtained by the authors, this solves the Kato Square Root Problem for elliptic second order operators and systems in divergence form under the same geometric assumptions. Thereby we answer a question posed by J.\ L.\ Lions in 1962 \cite{Counterexample_Lions}.
\end{abstract}
\maketitle

\section{Introduction}
\label{Sec: Introduction}

\noindent Let $-\nabla \cdot \mu \nabla$ be an elliptic differential operator in divergence form with bounded complex coefficients on a domain $\Omega$, subject to Dirichlet boundary conditions on some closed subset $D$ of the boundary $\bd \Omega$ and natural boundary conditions on $\bd \Omega \setminus D$ in the sense of the form method. Let $A$ be the maximal accretive realization of $-\nabla \cdot \mu \nabla$ on $\L^2(\Omega)$. The \emph{Kato Square Root Problem} for $A$ amounts to identifying the domain of the maximal accretive square root of $A$ as the domain of the corresponding form, i.e.\ the subspace of the first order Sobolev space $\H^1(\Omega)$ whose elements vanish on $D$. In this case $A$ is said to have the \emph{square root property}.

Whereas for self-adjoint $A$ the square root property is immediate from abstract form theory \cite{Kato}, the problem for non self-adjoint operators remained open for almost 40 years. For a historical survey explaining also the special role of the square root of $A$ compared to other fractional powers, we refer to \cite{McIntosh-KatoSurvey, Kato-Square-Root-Proof}. Shortly after being solved on the whole space by Auscher, Hofmann, Lacey, M\textsuperscript{c}Intosh, and Tchamitchian \cite{Kato-Square-Root-Proof}, \cite{Kato-Systems-Proof}, Auscher and Tchamitchian used localization techniques to give a proof on strongly Lipschitz domains complemented by either pure Dirichlet or pure Neumann boundary conditions \cite{Kato-homogeneousBoundary}. Earlier efforts concerning mixed boundary conditions culminated in the work of Axelsson, Keith, and M\textsuperscript{c}Intosh \cite{AKM}, who gave a proof for smooth domains with a Dirichlet part whose complement within the boundary is smooth and in addition -- due to the first order structure of the problem -- for global bi-Lipschitz images of these configurations.

The purpose of the present paper is to solve the Kato Square Root Problem on bounded domains under much more general geometric assumptions than in \cite{Kato-homogeneousBoundary} and \cite{AKM}. First and foremost we can dispense with the Lipschitz property of $\Omega$ in the following spirit: We assume that $\bd \Omega$ decomposes into a closed subset $D$, to be understood as the Dirichlet part, and its complement, which are allowed to share a common frontier. We demand that $D$ is a $(d-1)$-set in the sense of Jonsson-Wallin, or equivalently satisfies the Ahlfors-David condition, and only around $\cl{\bd \Omega \setminus D}$ do we demand local bi-Lipschitz charts. In addition, we in essence impose a plumpness, or interior corkscrew condition on $\Omega$, which, roughly speaking, excludes outward cusps also along the Dirichlet part. For precise definitions we refer to Section~\ref{Sec: Notation and general assumptions}.

In particular, $\Omega$ may be sliced or touch its boundary from two sides, see Figure~\ref{Fig: Cone} for a striking constellation.
\begin{figure}[ht]
\label{Fig: Cone}
	\centering
  \includegraphics[scale=0.4]{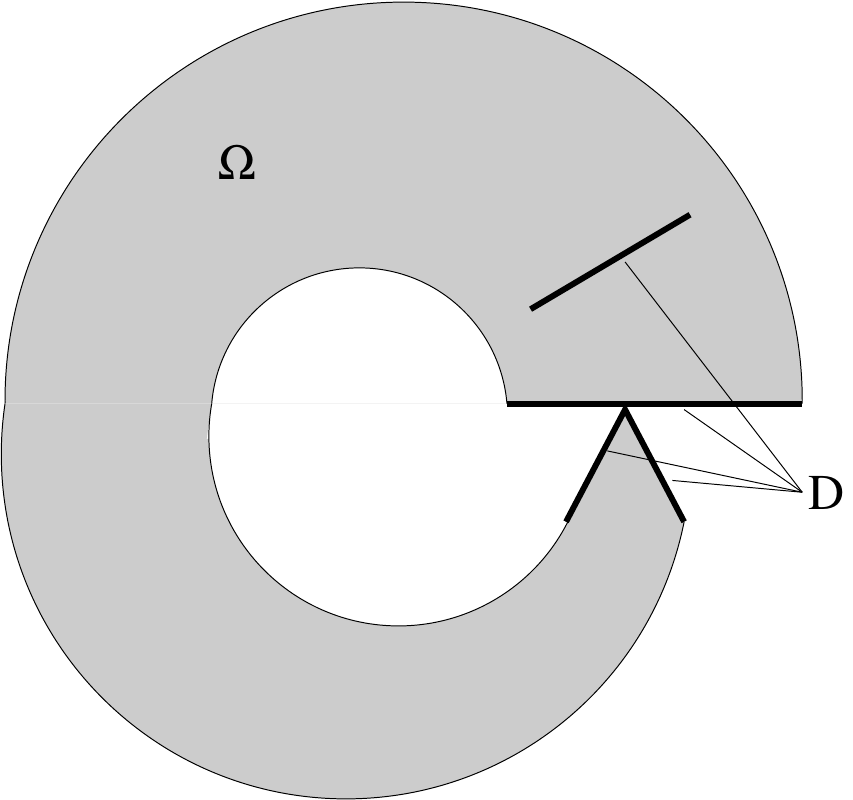}
	\caption{The domain $\Omega \subseteq \IR^2$ is obtained by smoothly deforming an acute triangle such that one apex touches the opposed side from outside. Afterwards a closed line segment is removed from its interior. Around the points on this line segment, as well as around the former apex, the Lipschitz condition for $\partial \Omega$ is violated as $\Omega$ does not locally lie on one side of its boundary -- but these parts belong to the Dirichlet part $D$. Around $\overline{\partial \Omega \setminus D}$ the boundary of $\Omega$ is smooth and since $D$ is a union of Lipschitz curves, it satisfies the Ahlfors-David condition.  }
	\label{fig1}
\end{figure}

As special cases the pure Dirichlet ($D = \bd \Omega$) and the pure Neumann case ($D = \emptyset$) are included. Let us stress that in the former we can dispense with the Lipschitz property of the domain completely.

More recently, relative results including the square root property for $A$ as an assumption have been obtained. This concerns extrapolation of the square root property to $\L^p$ spaces \cite{ABHR}, maximal parabolic regularity on distribution spaces \cite{ABHR}, and perturbation theory \cite{Gesztesy-Potentials}. One of our main motivations for writing the present paper was to close this gap between geometric constellations in which the Kato Square Root Problem is solved and those in which its solution already applies to other topics.

It is convenient to view the Kato Square Root Problem as the problem of proving optimal Sobolev regularity for the domain of the square root of $A$. Indeed, as $A$ is associated to a second order differential operator, the domain of $A$ allows for at most two distributional derivatives in $\L^2(\Omega)$. Hence, by interpolation the optimal Sobolev regularity for the domain of its square root is one distributional derivative in $\L^2(\Omega)$. It is remarkable that optimal Sobolev regularity for the domain can even fail for the negative Laplacian if $\Omega$ is smooth \cite{Shamir-Counterexample}, whereas in this case optimal regularity for the domain of the square root is immediate by self-adjointness.

Recently we proved that the Kato Square Root Problem for any elliptic operator in divergence form on $\Omega$, complemented with local homogeneous boundary conditions, can be reduced to a regularity result for the fractional powers of the simplest operator in this class -- the negative Laplacian \cite{Laplace-Extrapolation-Implies-Kato}. In essence, it has to be shown that there exists an $\alpha > \frac{1}{2}$ such that the domain of $(-\Delta)^\alpha$ is a Sobolev space of optimal order $2 \alpha$, see Section~\ref{Sec: Main results} for details. This should be regarded as the extrapolation of the square root property for $-\Delta$, which refers to the case $\alpha = \frac{1}{2}$. 

As our main theorem we prove this extrapolation result for the negative Laplacian in the described geometric setting, thereby solving the Kato Square Root Problem via reduction to the results in \cite{Laplace-Extrapolation-Implies-Kato}. In the case of a real coefficient matrix $\mu$ this also yields the solution to the Square Root Problem for mixed boundary conditions on $\L^p(\Omega)$ for $p \in (1,2)$, cf.\ \cite{ABHR}.

The paper is organized as follows. In Section~\ref{Sec: Notation and general assumptions} we introduce some general notation, fix our geometric setting and properly define the elliptic operator under consideration. In Section~\ref{Sec: Sobolev mixed boundary} we introduce a continuous scale $\{\H_D^s(\Omega)\}_{1/2 < s < 3/2}$ of $\L^2$ based Sobolev spaces related to mixed boundary conditions and establish some preliminary properties. Subsequently, we state our main result in Section~\ref{Sec: Main results} and infer from it the solution to the Kato Square Root Problem. The proof of our main result is presented later on in Section~\ref{Sec: Proof of the main result}. Our proof is based on an interpolation argument going back to Pryde \cite{Pryde-MixedBoundary}. The same idea has been utilized in \cite{AKM}. 

Due to the generality of our geometric setting -- in particular because localization techniques are not feasible around the Dirichlet part of the boundary -- the adaption of Pryde's argument requires some preparations. These lead to new results that are interesting in themselves. We develop a suitable interpolation theory for the family $\{\H_D^s(\Omega)\}_{1/2 < s < 3/2}$ in Section~\ref{Sec: Interpolation for HDs} relying on two key ingredients. Firstly, in Section~\ref{Sec: Extension operators} we construct a degree independent extension operator, heavily resting on Rogers' universal extension operator for $(\eps,\delta)$-domains \cite{Rogers-Article} and recent results on fractional Hardy inequalities \cite{Hardy-Triebel-LizorkinII}, \cite{Edmunds-Hardy}, \cite{Dyda-Vahakangas}, \cite{Hardy-Triebel-LizorkinI}. Secondly, we prove a fractional Hardy type inequality for Sobolev spaces with partially vanishing boundary trace in Section~\ref{Sec: Fractional Hardy for HDs}, thereby extending a result from \cite{ABHR}.

Finally, in Section~\ref{Sec: Elliptic Systems} we extend our proof of the Kato Square Root Problem to coupled elliptic systems. Here, we can even allow for a different Dirichlet part for each component.

\subsection*{Acknowledgments}

The authors want to thank J.\ Rehberg for valuable discussions and hints on the topic, and A.\ V.\ V\"ah\"akangas for generously providing us with a preprint of \cite{Edmunds-Hardy} and giving the decisive hint for the proof of Proposition~\ref{Prop: Hardy for testfunctions on Omega-bullet}.

\section{Notation and General Assumptions}
\label{Sec: Notation and general assumptions}

\noindent Most of our notation is standard. Throughout the dimension $d \geq 2$ of the surrounding Euclidean space is fixed. The open ball in $\IR^d$ with center $x$ and radius $r$ is denoted by $B(x,r)$. The symbol $\abs{\argdot}$ is used for the absolute value of complex numbers and the Euclidean norm of vectors in $\IR^d$ as well as for the $d$-dimensional Lebesgue measure. The Euclidean distance between subsets $E$ and $F$ of $\IR^d$ is $\d(E,F):= \inf \{\abs{x-y}: x \in E, \, y \in F \}$. If $E = \{x\}$ then the abbreviation $\d_F(x)$ is used. By a domain we always mean an open connected set.

Integration is with respect to the Lebesgue measure on $\IR^d$ unless the contrary is claimed. The same applies to measure theoretic abbreviations such as \emph{a.e.}\ (almost everywhere) and \emph{a.a.}\ (almost all). For average integrals the symbol $\fint$ is used. The Lebesgue spaces on a complete measure space $(X, \Sigma, \mu)$ are denoted by $\L^p(X, \mu)$.

Any Banach space $X$ under consideration is taken over the complex numbers. Its norm is usually denoted by $\|\cdot\|_{X}$. If $Y$ is another Banach space then $X = Y$ means that $X$ and $Y$ coincide as sets and that their norms are equivalent. The domain of a closed operator $B$ on $X$ is denoted by $\dom(B)$. It is usually regarded as a Banach space equipped with the graph norm. The space of bounded linear operators from $X$ to $Y$ is $\Lop(X,Y)$ and $\Lop(X,X)$ is abbreviated by $\Lop(X)$.

We will use the generic constants convention and write $\lesssim$ and $\gtrsim$ for inequalities that hold up to multiplication by a constant $C>0$ not depending on the parameters at stake. We write $a \simeq b$ if $a \lesssim b$ and $b \lesssim a$ hold.

\subsection{Geometric setting}
\label{Subsec: Geometric Setting}

Let us state precisely our geometric assumptions concerning the domain $\Omega$ and the Dirichlet part $D$ of its boundary. First, we recall the notion of an $l$-set according to Jonsson-Wallin \cite[Sec.\ VIII.1.1]{Jonsson-Wallin}.

\begin{definition}
\label{Def: (d-1)-set}
Assume $0 < l \leq d$. A non-empty Borel set $F \subseteq \IR^d$ is called \emph{$l$-set} if 
\begin{align*}
 \m_l(F \cap B(x,r)) \simeq r^l \qquad (x \in F,\, 0 < r \leq 1),
\end{align*}
where here and henceforth $\m_l$ denotes the $l$-dimensional Hausdorff measure on $\IR^d$.
\end{definition}

\begin{remark}
\label{Rem: Remarks on d-1 set}
\begin{enumerate}
\itemsep3pt
 \item The condition $r \leq 1$ can be replaced by $r \leq r_0$ for any fixed $r_0 > 0$. Also one can replace open balls by closed ones without changing the notion of an $l$-set.
 \item If $F$ is an $l$-set then so is $\cl{F}$ and $\cl{F} \setminus F$ has $\m_l$-measure zero \cite[Sec.\ VIII.1.1]{Jonsson-Wallin}. At many occasions this allows us to assume without loss of generality that a given $l$-set is closed.
 \item An equivalent, commonly used notion for $(d-1)$-sets is that of sets satisfying the \emph{Ahlfors-David condition}.
 \item We will occasionally use that the union of two $l$-sets $E,F \subseteq \IR^d$ is a again an $l$-set. To see this, fix $x \in E \cup F$ and $r \in (0,1]$. Without restrictions assume $x \in E$. If $F \cap B(x,r) = \emptyset$ then $\m_l((E \cup F) \cap B(x,r)) \simeq r^l$ is immediate. Otherwise there exists some $y \in F \cap B(x,r)$ and the assertion follows from the inclusions
\begin{align*}
 E \cap B(x,r) \subseteq (E \cup F) \cap B(x,r) \subseteq (E \cap B(x,r)) \cup (F \cap B(y,2r)).
\end{align*}
\end{enumerate}

\end{remark}

Throughout this work we suppose the following configuration.

\begin{assumption}
\label{Ass: General geometric assumption on Omega}
\begin{enumerate}
\itemsep3pt
 \item The domain $\Omega \subseteq \IR^d$, $d \geq 2$, is non-empty, bounded, and $D$ is a closed subset of its boundary $\bd \Omega$. The domain $\Omega$ itself is a $d$-set, i.e.\
 \begin{align*}
  \abs{\Omega \cap B(x,r)} \simeq r^d \qquad (x \in \Omega, 0< r \leq 1)
 \end{align*}
 since the $d$-dimensional Hausdorff measure on $\IR^d$ is equivalent to the Lebesgue measure.
 \item The set $D$, to be understood as the \emph{Dirichlet part} of $\bd \Omega$, is either empty or a $(d-1)$-set.
 \item For every $x \in \cl{\bd \Omega \setminus D}$ there exists an open neighborhood $U_x$ and a bi-Lipschitz map $\Phi_x$ from $U_x$ onto the unit cube $(-1,1)^d$ such that
  \begin{align*}
  \Phi_x(x) &= 0, \\
  \Phi_x(\Omega \cap U_x) &= (-1,1)^{d-1} \times (-1,0), \\
  \Phi_x(\bd \Omega \cap U_x) &= (-1,1)^{d-1} \times \{0\}.
  \end{align*}
\end{enumerate}
\end{assumption}

\begin{remark}
\label{Rem: Remark on geometric assumptions}
\begin{enumerate}
\itemsep3pt
 \item Under Assumption~\ref{Ass: General geometric assumption on Omega} the boundary of $\Omega$ is a $(d-1)$-set as well. Indeed, thanks to the bi-Lipschitz parametrizations for each $x \in \cl{\bd \Omega \setminus D}$ the set $\bd \Omega \cap U_x$ is a $(d-1)$-set. Hence, by compactness of $\cl{\bd \Omega \setminus D}$, the boundary of $\Omega$ can be written as the finite union of $(d-1)$-sets and the claim follows from Remark~\ref{Rem: Remarks on d-1 set}.
 \item Being a bounded open $d$-set whose boundary is a $(d-1)$-set, the domain $\Omega \subseteq \IR^d$ also satisfies the following \emph{plumpness} or \emph{interior corkscrew} condition: There exists a $\kappa \in (0,1)$ with the property that for each $x \in \cl{\Omega}$ and each $r \in (0, \diam(\Omega))$ there exists $y \in \cl{B(x,r)}$ such that $\cl{B(y, \kappa r)} \subseteq \Omega$, cf.\ \cite[Cor.~7.6]{Laplace-Extrapolation-Implies-Kato}. 

 Of course the plumpness condition is a stronger condition than being a $d$-set. From a geometric point of view it prevents $\Omega$ from having outward cusps. It is easy to see that every bounded Lipschitz domain is plump. For a domain that fulfills Assumption~\ref{Ass: General geometric assumption on Omega} but notably violates the Lipschitz property see Figure~\ref{Fig: Cone}. 
 \item Part (3) of Assumption~\ref{Ass: General geometric assumption on Omega} is void in the case of pure Dirichlet boundary conditions, i.e.\ if $D = \bd \Omega$. Hence, in this case we can dispense with the Lipschitz property of the boundary completely.
\end{enumerate}
\end{remark}

\subsection{The elliptic operator}
\label{Subsec: The Elliptic Operator}

Next, we define the elliptic operator $- \nabla \cdot \mu \nabla$ under consideration properly by means of Kato's form method \cite{Kato}. We begin with the following sets of test functions.

\begin{definition}
\label{Def: CFinfty}
If $\Xi \subseteq \IR^d$ is a domain and $F$ a subset of $\cl{\Xi}$ put
\begin{align}
\label{Eq: CFinfty}
 \C_F^\infty(\Xi):= \left \{u|_\Xi: u \in \C_c^\infty(\IR^d),\, \dist(\supp(u), F) > 0 \right \}.
\end{align}
\end{definition}

\begin{remark}
\label{Rem: CFinfty compact support only on unbounded domains}
The assumption that $u$ in \eqref{Eq: CFinfty} has compact support can be dropped if $\Xi$ is bounded. Hence, Definition~\ref{Def: CFinfty} is in accordance with \cite{ABHR}, \cite{Haller-Rehberg}.
\end{remark}

The form domain $\V$ that incorporates Dirichlet boundary conditions on $D$ and natural boundary conditions on $\partial \Omega \setminus D$ is defined in the usual way, see e.g.\ \cite{Ouhabaz}, as follows. The reader should also compare this definition of the form domain to the more restrictive one used in \cite{AKM}.

\begin{definition}
\label{Def: Form domain}
The form domain $\V$ is the closure of $\C_D^\infty(\Omega)$ under the Hilbertian norm
\begin{align*}
 \|u\|_{\V} := \bigg(\int_\Omega \abs{u}^2 + \abs{\nabla u}^2\bigg)^{1/2} \qquad (u \in \V).
\end{align*}
\end{definition}

\begin{remark}
\label{Rem: Other characterization of form domain}
In Section~\ref{Sec: Main results} we will give a different characterization of $\V$ as the subspace of the first order Sobolev space $\H^1(\Omega)$ whose elements vanish on $D$ in the sense of a trace.
\end{remark}

Next, we state our assumptions on the coefficient matrix.

\begin{assumption}
\label{Ass: Ellipticity}
The coefficient matrix $\mu$ is a Lebesgue measurable, bounded function on $\Omega$ taking its values in the set of complex $d \times d$ matrices. The associated sesquilinear form
\begin{align*}
 \a: \V \times \V \to \IC, \quad \a(u,v) = \int_\Omega \mu \nabla u \cdot \nabla \cl{v}
\end{align*}
is elliptic in the sense that for some $\lambda > 0$ it satisfies the \emph{G\aa{}rding inequality}
\begin{align*}
 \Re (\a(u,u)) \geq \lambda \|\nabla u\|_{\L^2(\Omega; \IC^d)}^2 \qquad (u \in \V).
\end{align*}
\end{assumption}

Since $\V$ is dense in $\L^2(\Omega)$ and $\a$ is elliptic, it is known from classical form theory, see e.g.\ \cite[Sec.~VI]{Kato} that there exists a unique maximal accretive operator $A$ on $\L^2(\Omega)$ such that $\dom(A) \subseteq \V$ and
\begin{align*}
  \a(u,v) = \langle Au, v\rangle_{\L^2(\Omega)} \qquad (u \in \dom(A),\, v \in \V).
\end{align*}
Here, an operator $B$ on a Hilbert space $H$ is \emph{maximal accretive} if it is closed, and if for $z$ in the open right complex halfplane $z + B$ is invertible with $\|(z + B)^{-1}\|_{\Lop(H)} \leq \Re (z)^{-1}$. 

As usual, the divergence form operator $- \nabla \cdot \mu \nabla$ is identified with $A$. Note that if $\mu$ is the identity matrix then $-A$ is the \emph{weak Laplacian} with form domain $\V$, denoted by $\Delta_\V$ in the following. More precisely, this operator is characterized by
\begin{align*}
 \int_\Omega \nabla u \cdot \nabla \cl{v} = - \langle \Delta_\V u, v\rangle_{\L^2(\Omega)} \qquad (u \in \dom(\Delta_\V),\, v \in \V).
\end{align*}

The \emph{fractional powers} $A^\alpha$, $\Re(\alpha) > 0$, can be defined by the functional calculus for sectorial operators, see e.g.\ \cite{Haase}, \cite{Lunardi-Interpolation}. They are closed operators given by the \emph{Balakrishnan Representation}
\begin{align}
\label{Balakrishnan Representation}
 A^\alpha u = \frac{\Gamma(k)}{\Gamma(\alpha) \Gamma(k-\alpha)} \int_0^\infty t^{\alpha -1} (A(t+A)^{-1})^k u \; \d t
\end{align}
if $k > \Re(\alpha)$ is an integer and $u \in \dom(A^k)$. In this case $\dom(A^k)$ is a core for $\dom(A^\alpha)$. Given $\eps > 0$, the fractional powers of the maximal accretive operator $\eps + A$ are defined analogously and their domains satisfy $\dom((\eps + A)^\alpha) = \dom(A^\alpha)$, $\Re(\alpha)>0$, with equivalence of the respective graph norms. For proofs the reader may consult \cite[Sec.\ ~3.1]{Haase}.

\section{Sobolev Spaces related to Mixed Boundary Conditions}
\label{Sec: Sobolev mixed boundary}

\noindent We introduce a continuous scale of Sobolev spaces related to mixed boundary conditions and establish some preliminary properties that will be needed later on. Unless the contrary is claimed, all function spaces are of complex valued functions.

\subsection{Sobolev spaces on $\IR^d$ and on domains}
\label{Subsec: Sobolev spaces}

For $s \in \IN_0$ denote by $\H^s(\IR^d)$ the Sobolev space of $\L^2(\IR^d)$ functions whose distributional derivatives up to order $s$ are in $\L^2(\IR^d)$ equipped with the usual Hilbert space norm. With this convention, $\H^0(\IR^d) = \L^2(\IR^d)$. For $s \in \IR_+ \setminus \IN_0$ let $k:= \lfloor s \rfloor$ be the integer part of $s$ and set $\theta:= s-k$. The respective (fractional) Sobolev space then is
\begin{align*}
 \H^{s}(\IR^d) := \Big \{f \in \H^k(\IR^d): \|f\|_{\H^{s}(\IR^d)}:= \|f\|_{\H^k(\IR^d)} + \sum_{\abs{\alpha}= k} [\partial^\alpha f]_\theta < \infty \Big \},
\end{align*}
where
\begin{align*}
 [f]_\theta := \bigg( \iint_{\abs{x-y} < 1}\frac{\abs{f(x)-f(y)}^2}{\abs{x-y}^{d+2\theta}} \; \d x \; \d y \bigg)^{1/2}.
\end{align*}
The definition of $[\argdot]_\theta$ above differs from the common ones, cf.\ \cite[pp.\ 189-190]{Triebel}, as integration does not take place over all of $\IR^d \times \IR^d$. This is due to technical reasons and one readily checks that the different definitions lead to equivalent norms on $\H^{s}(\IR^d)$. 

Without further mentioning we will frequently use that for $s \in \IR_+$ the Sobolev space $\H^s(\IR^d)$ coincides with both the Bessel potential space $\H_2^s(\IR^d)$ and the Triebel-Lizorkin space $\F_{2,2}^s(\IR^d)$, see e.g.\ \cite[pp.\ 172, 177, 189-190]{Triebel}. Also recall that for each $s \in \IR_+$ the set $\C_c^\infty(\IR^d)$ of smooth functions with compact support is dense in $\H^s(\IR^d)$, see e.g.\ \cite[Sec.\ 2.3.2]{Triebel}.

For $\Xi \subseteq \IR^d$ a domain and $s \in \IR_+$ the Banach space $\H^s(\Xi)$ is defined by restricting functions from $\H^s(\IR^d)$ to $\Xi$, i.e.\ by $\H^s(\Xi):= \{f|_{\Xi}: f \in \H^s(\IR^d)\}$ equipped with the usual quotient norm 
\begin{align*}
 \|f\|_{\H^s(\Xi)} := \inf \left \{\|g\|_{\H^s(\IR^d)}: g|_{\Xi} = f \right\} \qquad (f \in \H^s(\Xi)).
\end{align*}

\begin{remark}
\label{Rem: Sobolev spaces not locally defined}
Let $\Xi$, $s$, $k$, and $\theta$ be as before. Note carefully that by construction
\begin{align*}
  \bigg(\sum_{\abs{\alpha} \leq k} \|\partial^\alpha f\|_{\L^2(\Xi)}^2\bigg)^{1/2} + \sum_{\abs{\alpha}= k} \bigg( \iint_{\substack{x,y \in \Xi \\ \abs{x-y} < 1}}\frac{\abs{\partial^\alpha f(x)-\partial^\alpha f(y)}^2}{\abs{x-y}^{d+2\theta}} \; \d x \; \d y \bigg)^{1/2} \leq \|f\|_{\H^{s}(\Xi)}
\end{align*}
holds for all $f \in \H^s(\Xi)$, where we think of the second term as not being present if $s \in \IN_0$, but without further assumptions on $\Xi$ these norms are not comparable, cf.\ \cite[Sec.\ 5]{Sobolev-Hitchhiker}.
\end{remark}

\subsection{Sobolev spaces with partially vanishing traces}
\label{Subsec: Sobolev spaces with partially vanishing traces}

For the Sobolev spaces with partially vanishing boundary traces we restrict ourselves to $s \in (\frac{1}{2},\frac{3}{2})$ since only these values will be of interest in the following.

Fractional Sobolev spaces on $(d-1)$-sets can be defined in a natural way as long as $s \in (0,1)$. We follow the presentation in \cite{Jonsson-Wallin} but for consistency stick to the notation $\H^s$ rather than $\mathrm{B}_{2,2}^s$.

\begin{definition}
\label{Def: Sobolev spaces on (d-1)-sets}
Let $F \subseteq \IR^d$ be a $(d-1)$-set and $s \in (0,1)$. The fractional Sobolev space $\H^s(F)$ consists of those $f \in \L^2(F,\, \m_{d-1})$ that satisfy
\begin{align*}
 \|f\|_{\H^s(F)}:= \|f\|_{\L^2(F,\, \m_{d-1})} + \bigg( \iint_{\substack{x,y \in F \\ \abs{x-y} < 1}}\frac{\abs{f(x)-f(y)}^2}{\abs{x-y}^{d-1+2s}} \; \d \m_{d-1}(x) \; \d \m_{d-1}(y) \bigg)^{1/2} < \infty.
\end{align*}
Equipped with the norm $\|\cdot\|_{\H^s(F)}$ it becomes a Banach space.
\end{definition}

The ultimate instrument for the treatise of Sobolev spaces with partially vanishing boundary traces is the following extension-restriction result. We refer to Sections VII.1.1 and VII.2.1 in \cite{Jonsson-Wallin} for the first two assertions and to \cite[Thm.\ 2.5]{Rehberg-Jonsson} for the third.

\begin{proposition}
\label{Prop: Jonsson-Wallin trace theorem}
Let $F \subseteq \IR^d$ be a $(d-1)$-set and $s \in (\frac{1}{2},\frac{3}{2})$.
\begin{enumerate}
\itemsep3pt
 \item For $f \in \H^s(\IR^d)$ the limit
  \begin{align*}
   (\R_F f)(x_0):= \lim_{r \to 0} \fint_{B(x_0,r)} f(x) \; \d x
  \end{align*}
 exists for $\m_{d-1}$-almost all $x_0 \in F$. The so defined restriction operator $\R_F$ maps $\H^s(\IR^d)$ boundedly onto $\H^{s-1/2}(F)$.
 \item Conversely, there exists a bounded extension operator $\E_F: \H^{s-1/2}(F) \to \H^s(\IR^d)$ which forms a right inverse for $\R_F$. By construction $\E_F$ does not depend on $s$.
 \item The operator $\E_F$ maps Lipschitz continuous functions on $F$ to Lipschitz continuous functions on $\IR^d$.
\end{enumerate}
\end{proposition}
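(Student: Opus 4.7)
My plan is to reduce everything to intrinsic characterizations: identify $\H^s(\IR^d)$ with the Besov space $\mathrm{B}_{2,2}^s(\IR^d)$, which by standard Littlewood--Paley theory can be normed by first/second differences, and recognize $\H^{s-1/2}(F)$ as the natural Besov trace space on $(F,\m_{d-1})$. With this setup, (1) is a Lebesgue-style differentiation statement: for $f \in \H^s(\IR^d)$ the net $r \mapsto \fint_{B(x_0,r)} f$ is Cauchy as $r \downarrow 0$ at $\m_{d-1}$-almost every $x_0 \in F$, the key ingredient being a weighted maximal inequality in which the Ahlfors--David bound $\m_{d-1}(F \cap B(x,r)) \simeq r^{d-1}$ provides the correct scaling between $d$- and $(d-1)$-dimensional integration. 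One then estimates the intrinsic Besov seminorm on $F$ of $\R_F f$ by a double-integral manipulation: write $(\R_F f)(x) - (\R_F f)(y)$ as a telescoping sum over dyadic radii and square the resulting sum against $\abs{x-y}^{-(d-1+2(s-1/2))}$.

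For (2) I would invoke a Whitney decomposition $\{Q_j\}$ of $\IR^d \setminus F$ with $\ell(Q_j) \simeq \dist(Q_j,F)$, select a nearest point $x_j \in F$ for each $Q_j$, set $F_j := F \cap B(x_j, c \ell(Q_j))$, and define
\[
 \E_F f := \sum_j \varphi_j \cdot \fint_{F_j} f \; \d \m_{d-1},
\]
with $\{\varphi_j\}$ a smooth partition of unity subordinate to a mild dilation of $\{Q_j\}$. Because this formula contains no reference to $s$, degree independence is automatic. The right-inverse property $\R_F \E_F f = f$ $\m_{d-1}$-a.e. is recovered by letting $Q_j$ shrink toward $x_0 \in F$ and invoking Lebesgue differentiation with respect to $\m_{d-1}$, again using the Ahlfors--David condition.

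Assertion (3) is essentially read off the explicit formula: if $f$ is Lipschitz on $F$ with constant $L$, then on each Whitney cube the values $\fint_{F_j} f$ attached to neighbouring indices differ by at most $O(L \ell(Q_j))$, and the bound $\abs{\nabla \varphi_j} \lesssim \ell(Q_j)^{-1}$ absorbs the scale exactly. A short computation separates pairs $(x,y)$ with $\abs{x-y} \lesssim \min(\dist(x,F),\dist(y,F))$, where one applies the gradient estimate, from the remaining pairs, where one compares each value with $f$ evaluated at a nearby footpoint on $F$.

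The hardest part, and the technical heart of the argument, is the norm estimate in (2): one must bound a Gagliardo-type double integral of $\E_F f$ over $\IR^d \times \IR^d$ by a Gagliardo-type double integral of $f$ over $F \times F$. This demands carefully matching the Whitney scales $\ell(Q_j)$ against the distances in $F$ across which the averages $\fint_{F_j} f$ fluctuate, and using the Ahlfors--David condition to convert $d$-dimensional volumes into $(d-1)$-dimensional masses. Once this central computation is in place, the remaining items follow in a fairly standard way.
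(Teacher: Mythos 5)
The paper does not prove this proposition at all: it is quoted as a known result, with the authors referring to Sections~VII.1.1 and~VII.2.1 of Jonsson and Wallin for assertions~(1) and~(2), and to Theorem~2.5 of Haller-Dintelmann, Jonsson, Knees and Rehberg for assertion~(3). There is therefore no ``paper's own proof'' to compare against; what you have sketched is a reconstruction of the argument in the cited reference.

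That said, your reconstruction is essentially faithful to the Jonsson--Wallin route: identify $\H^s(\IR^d)$ with $\mathrm{B}^s_{2,2}(\IR^d)$, use a Whitney decomposition of $\IR^d \setminus F$ with a partition of unity and local averages of $f$ over $F \cap B(x_j, c\,\ell(Q_j))$ to build $\E_F$, and verify the right-inverse property by Lebesgue differentiation with respect to $\m_{d-1}$ (which is where the Ahlfors--David regularity of $F$ enters). The degree-independence of $\E_F$ and the Lipschitz-preservation in~(3) are indeed immediate from the explicit formula, exactly as you say. Two small remarks, at the level of a sketch rather than genuine gaps: the ``onto'' in~(1) is not a separate differentiation-theoretic fact but is just a consequence of the right inverse $\E_F$ constructed in~(2); and your sketch of~(1) implicitly needs the quasicontinuity/capacity step showing that the exceptional set where the averages fail to converge has $\m_{d-1}$-measure zero on the $(d-1)$-set $F$ --- your phrase ``weighted maximal inequality'' gestures at this, but it is the genuinely nontrivial part of~(1) and deserves to be made explicit. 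You also acknowledge that the norm estimate for $\E_F$ is the technical heart; that is the correct assessment, and it is where most of the work in Jonsson--Wallin lives.
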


\begin{definition}
\label{Def: Sobolev spaces with vanishing trace}
Let $F \subseteq \IR^d$ be a $(d-1)$-set, $s \in (\frac{1}{2},\frac{3}{2})$, and $\R_F$ as in Proposition~\ref{Prop: Jonsson-Wallin trace theorem}. 
\begin{enumerate}
\itemsep3pt
 \item Put
  \begin{align*}
   \H_F^s(\IR^d):= \big \{f \in \H^s(\IR^d): \R_F f = 0 \quad \text{$\m_{d-1}$-a.e.\ on $F$} \big \},
  \end{align*}
 which by continuity of $\R_F$ is a closed subspace of $\H^s(\IR^d)$ and thus complete under the inherited norm. It is convenient to also define $\H_{\emptyset}^s(\IR^d) := \H^s(\IR^d)$.
 \item If $\Xi \subseteq \IR^d$ is a domain and $F \subseteq \cl{\Xi}$ put $\H_F^s(\Xi):= \{f|_{\Xi}: f \in \H_F^s(\IR^d)\}$ and equip it with the usual quotient norm. Again, also define $\H_{\emptyset}^s(\Xi) := \H^s(\Xi)$.
\end{enumerate}
\end{definition}

\begin{corollary}
\label{Cor: HFs is complemented} 
If $F \subseteq \IR^d$ is a $(d-1)$-set and $s \in (\frac{1}{2},\frac{3}{2})$ then $\H_F^s(\IR^d)$ is a complemented subspace of $\H^s(\IR^d)$ with bounded projection $\P_F:= \Id - \E_F \R_F$. 
\end{corollary}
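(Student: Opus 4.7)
The plan is to verify directly that $\P_F := \Id - \E_F \R_F$ is a bounded linear projection of $\H^s(\IR^d)$ onto $\H_F^s(\IR^d)$. Boundedness is immediate from Proposition~\ref{Prop: Jonsson-Wallin trace theorem}: both $\R_F : \H^s(\IR^d) \to \H^{s-1/2}(F)$ and $\E_F : \H^{s-1/2}(F) \to \H^s(\IR^d)$ are bounded, so $\P_F$ is a composition and difference of bounded operators on $\H^s(\IR^d)$.

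The only genuine input beyond boundedness is the identity $\R_F \E_F = \Id$ on $\H^{s-1/2}(F)$, which is precisely the content of Proposition~\ref{Prop: Jonsson-Wallin trace theorem}(2). Using this, I would check the three properties that make $\P_F$ a projection onto $\H_F^s(\IR^d)$. First, idempotency: expanding $\P_F^2 = (\Id - \E_F \R_F)^2$ and using $\R_F \E_F = \Id$ gives $\P_F^2 = \Id - 2\E_F \R_F + \E_F(\R_F \E_F)\R_F = \Id - \E_F \R_F = \P_F$. Second, the range is contained in $\H_F^s(\IR^d)$: for any $f \in \H^s(\IR^d)$ one computes $\R_F \P_F f = \R_F f - (\R_F \E_F) \R_F f = 0$, so $\P_F f$ has vanishing trace on $F$ and hence lies in $\H_F^s(\IR^d)$ by Definition~\ref{Def: Sobolev spaces with vanishing trace}(1). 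Third, $\P_F$ acts as the identity on $\H_F^s(\IR^d)$: if $\R_F f = 0$, then $\P_F f = f - \E_F(\R_F f) = f$.

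Combining these three points shows that $\P_F$ is a bounded idempotent with $\Rg(\P_F) = \H_F^s(\IR^d)$, which is the very definition of $\H_F^s(\IR^d)$ being a complemented subspace. There is no serious obstacle here; the entire corollary is a formal consequence of the extension-restriction duality $\R_F \E_F = \Id$ already packaged in Proposition~\ref{Prop: Jonsson-Wallin trace theorem}. The main value of the statement lies not in its (easy) proof but in having a single concrete, $s$-independent projection $\P_F$ that will be used later—most notably in the interpolation theory developed in Section~\ref{Sec: Interpolation for HDs}—to transfer properties from $\H^s(\IR^d)$ to $\H_F^s(\IR^d)$.
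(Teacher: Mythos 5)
Your proof is correct and follows essentially the same route as the paper: verify boundedness from Proposition~\ref{Prop: Jonsson-Wallin trace theorem}, deduce idempotency from the right-inverse identity $\R_F\E_F = \Id$, and identify the range with $\H_F^s(\IR^d)$. The paper phrases the last step slightly more compactly by characterizing the fixed-point set of $\P_F$ (equivalently its range, since $\P_F$ is idempotent), but this is the same argument.
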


\begin{proof}
The right inverse property $\R_F \E_F = \Id$ on $\H^{s-1/2}(F)$, see Proposition~\ref{Prop: Jonsson-Wallin trace theorem}, immediately implies $\P_F^2 = \P_F$. Moreover, $f \in \H^s(\IR^d)$ satisfies $\P_F f = f$ if and only if $\E_F \R_F f = 0$ holds. Again by the right inverse property the latter is equivalent to $\R_F f = 0$, i.e.\ to $f\in \H_F^s(\IR^d)$.
\end{proof}

In the setting of Definition~\ref{Def: Sobolev spaces with vanishing trace} we think of $\R_F$ as the pointwise restriction and of $\H_F^s(\Xi)$ as the subspace of $\H^s(\Xi)$ containing the functions that vanish on $F$. If $f \in \H^s(\IR^d)$ is continuous (i.e.\ has a continuous representative) then the limit defining $\R_F f$ exists for all $x_0 \in F$ and indeed coincides with the pointwise restriction of (the continuous representative of) $f$ to $F$.

The following lemma on multiplication operators will be needed later on.

\begin{lemma}
\label{Lem: Multiplication trace lemma}
Let $\Xi \subseteq \IR^d$ be a domain and let $\eta: \IR^d \to \IC$ be bounded and twice differentiable with bounded derivatives up to order two.
\begin{enumerate}
\itemsep3pt
 \item The multiplication operator $M_\eta$ associated to $\eta$ is bounded on $\H^s(\Xi)$ if $s \in [0,2]$.
 
 \item Assume that $E\subseteq \cl{\Xi}$ is a $(d-1)$-set and that $F \subseteq E$ is either empty or a $(d-1)$-set. If $\eta$ vanishes on $E \setminus F$ then $M_\eta$ maps $\H_F^s(\Xi)$ boundedly into $\H_{E}^s(\Xi)$ for each $s \in (\frac{1}{2}, \frac{3}{2})$.
\end{enumerate}
\end{lemma}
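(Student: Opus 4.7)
For part (1), the plan is to first prove boundedness of $M_\eta$ on $\H^s(\IR^d)$ and then transfer to $\Xi$ via the quotient definition of $\H^s(\Xi)$. On $\H^s(\IR^d)$ with integer $s \in \{0,1,2\}$ the Leibniz rule together with the bounds on $\eta$, $\nabla\eta$, and $\nabla^2\eta$ yields the estimate at once; for non-integer $s \in (0,2)$ I would invoke complex interpolation of the Bessel potential scale, so that boundedness at the integer endpoints interpolates to $\H^s(\IR^d)$. (An alternative direct route would be to estimate the Gagliardo seminorm of $\eta u$ by splitting $\eta(x)u(x) - \eta(y)u(y) = \eta(x)(u(x)-u(y)) + (\eta(x)-\eta(y))u(y)$ and using $\|\eta\|_\infty$ together with the Lipschitz bound on $\eta$.) The transfer to $\H^s(\Xi)$ is bookkeeping: whenever $g \in \H^s(\IR^d)$ restricts to $f \in \H^s(\Xi)$, the product $\eta g$ restricts to $\eta f$, so passing to the infimum yields $\|\eta f\|_{\H^s(\Xi)} \lesssim \|f\|_{\H^s(\Xi)}$.

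For part (2), I would start with $u \in \H_F^s(\Xi)$ and lift it to some $v \in \H_F^s(\IR^d)$ with $v|_\Xi = u$. Part (1) already provides $\eta v \in \H^s(\IR^d)$ with norm controlled by $\|v\|_{\H^s(\IR^d)}$, so it suffices to verify that $\eta v \in \H_E^s(\IR^d)$; then $\eta u = (\eta v)|_\Xi$ lies in $\H_E^s(\Xi)$, and taking the infimum over $v$ delivers the quotient norm bound. The substantive step is the pointwise Leibniz identity
\[
\R_E(\eta v)(x_0) = \eta(x_0) \, \R_E v(x_0) \qquad \text{for $\m_{d-1}$-a.a.\ $x_0 \in E$},
\]
which I would establish by writing
\[
\fint_{B(x_0,r)} \eta v \, \d x = \eta(x_0) \fint_{B(x_0,r)} v \, \d x + \fint_{B(x_0,r)} (\eta - \eta(x_0)) v \, \d x
\]
and bounding the last term by $L r \fint_{B(x_0,r)} \abs{v} \, \d x$ with $L = \|\nabla\eta\|_\infty$. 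At $\m_{d-1}$-a.a.\ $x_0 \in E$ the trace $\R_E v(x_0)$ exists by Proposition~\ref{Prop: Jonsson-Wallin trace theorem}, so the average of $\abs{v}$ stays bounded as $r \to 0$ and the error term vanishes in the limit.

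With the identity established, the conclusion follows by decomposing $E = F \cup (E \setminus F)$. On $E \setminus F$ we have $\eta(x_0) = 0$ by hypothesis. On $F$ the average $\fint_{B(x_0,r)} v \, \d x$ is by definition the same limit that defines $\R_F v(x_0)$, which vanishes $\m_{d-1}$-a.e.\ on $F$ because $v \in \H_F^s(\IR^d)$; hence $\R_E v = 0$ $\m_{d-1}$-a.e.\ on $F$ as well. In either case the Leibniz identity forces $\R_E(\eta v)(x_0) = 0$. The main obstacle of the proof is thus the verification of the pointwise Leibniz identity for the trace operator $\R_E$, which is pinned down by the Lipschitz regularity of $\eta$ inherited from the bound on $\nabla \eta$; all remaining steps are essentially bookkeeping with the quotient norm.
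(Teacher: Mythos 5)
Your approach to both parts mirrors the paper's proof closely: in part (1) you interpolate between integer Sobolev spaces and transfer to $\Xi$ via the quotient norm, and in part (2) you lift $u$ to $v\in\H_F^s(\IR^d)$, split $\fint_{B(x_0,r)}\eta v$ into $\eta(x_0)\fint v$ plus the error term $\fint(\eta-\eta(x_0))v$, use vanishing of $\eta$ on $E\setminus F$ together with vanishing of $\R_F v$ on $F$ to kill the first term, and use the Lipschitz regularity of $\eta$ to kill the remainder. This is the same decomposition the paper uses.

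There is, however, a genuine gap in your treatment of the remainder term. Having bounded it by $Lr\fint_{B(x_0,r)}\abs{v}\,\d x$, you assert that \emph{because} $\R_E v(x_0)$ exists, ``the average of $\abs{v}$ stays bounded as $r\to 0$''. That implication is not warranted: Proposition~\ref{Prop: Jonsson-Wallin trace theorem} gives convergence of the \emph{signed} averages $\fint_{B(x_0,r)}v\,\d x$, and sign cancellations can in principle coexist with blow-up of $\fint_{B(x_0,r)}\abs{v}\,\d x$. Since the statement concerns $s\in(\tfrac12,\tfrac32)$, $v$ need not be locally bounded (unless $d=1$ or $2$ and $s$ is close to $\tfrac32$), so there is no embedding that lets you sidestep this. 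The paper fills exactly this gap with a short separate observation: for any $t\in(\tfrac12,1)$ with $t\le s$ one has $v\in\H^t(\IR^d)$, and then $\abs{v}\in\H^t(\IR^d)$ as well, by the reverse triangle inequality applied in the Gagliardo seminorm. Applying Proposition~\ref{Prop: Jonsson-Wallin trace theorem} to $\abs{v}$ then yields that $\R_E\abs{v}(x_0)$, i.e.\ $\lim_{r\to 0}\fint_{B(x_0,r)}\abs{v}\,\d x$, exists for $\m_{d-1}$-a.a.\ $x_0\in E$, and in particular stays bounded. Once you insert this observation your argument is complete and coincides with the paper's.
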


\begin{proof}
For the first claim let $s \in [0, 2]$. Since $M_\eta$ is bounded on $\L^2(\IR^d)$ and on $\H^2(\IR^d)$ its boundedness on $\H^s(\IR^d)$ follows by complex interpolation, see e.g.\ \cite[Thm.\ 6.4.5]{Bergh-Loefstroem}. Boundedness on $\H^s(\Xi)$ then is immediate from the definition of the quotient norm.

For the second claim let $s \in (\frac{1}{2}, \frac{3}{2})$, fix $f \in \H_F^s(\Xi)$, and let $g \in \H_F^s(\IR^d)$ be an extension of $f$. Passing to the limit $r \to 0$, due to Proposition~\ref{Prop: Jonsson-Wallin trace theorem} the left-hand side of
\begin{align*}
 \fint_{B(x_0,r)} M_\eta g(x) \; \d x = \fint_{B(x_0,r)} g(x)(\eta(x) - \eta(x_0)) \; \d x  + \eta(x_0) \fint_{B(x_0,r)} g(x) \; \d x
\end{align*}
converges to $\R_E M_\eta g(x_0)$ for $\m_{d-1}$-almost all $x_0 \in E$ and, as a consequence of $g \in \H_F^s(\IR^d)$, the second term on the right-hand side tends to zero for $\m_{d-1}$-almost all $x_0 \in F$. Taking into account that $\eta$ vanishes on $E \setminus F$ it follows for $\m_{d-1}$-almost all $x_0 \in E$ that
\begin{align}
\label{Splitting trace for multiplication: Eq1}
 \R_E M_\eta g(x_0) = \lim_{r \to 0} \fint_{B(x_0,r)} g(x)(\eta(x) - \eta(x_0)) \; \d x.
\end{align}
Note that $\R_E \abs{g}(x_0)$ is defined for $\m_{d-1}$-almost all $x_0 \in E$: Indeed, let $t \in (\frac{1}{2},1)$ be smaller than $s$. Then of course $g \in \H^t(\IR^d)$ and due to $t<1$ one can check $\abs{g} \in \H^t(\IR^d)$ by the reverse triangle inequality. 

If finally $x_0 \in E$ is such that the limit in \eqref{Splitting trace for multiplication: Eq1} exists and $\R_E \abs{g}(x_0)$ is defined then
\begin{align*}
 \abs{\R_E M_\eta g(x_0)} \leq \lim_{r \to 0} \|\eta- \eta(x_0)\|_{\L^\infty(B(x_0,r))} \fint_{B(x_0,r)} \abs{g(x)} \; \d x = 0
\end{align*}
by continuity of $\eta$. This proves $\R_E M_\eta g = 0$, i.e.\ $M_\eta g \in \H_E^s(\IR^d)$. Since $g$ was an arbitrary $\H_F^s(\IR^d)$ extension of $f$, the boundedness of $M_\eta: \H_F^s(\Xi) \to \H_E^s(\Xi)$ follows.
\end{proof}

For the following approximation result recall the spaces $\C_F^\infty$ from Definition~\ref{Def: CFinfty}.

\begin{proposition}
\label{Prop: Smooth functions dense in HFs}
Let $\Xi \subseteq \IR^d$ be a domain, let $F \subseteq \cl{\Xi}$ be either empty or a $(d-1)$-set, and let $s \in (\frac{1}{2},1]$. Then $\C_F^\infty(\Xi)$ is dense in $\H_F^s(\Xi)$.
\end{proposition}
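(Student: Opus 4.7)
The plan is to reduce density in $\H_F^s(\Xi)$ to density in $\H_F^s(\IR^d)$, and then to run a cut-off and mollification argument against a smoothed distance to $F$.

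For the reduction, any $f \in \H_F^s(\Xi)$ admits an extension $g \in \H_F^s(\IR^d)$ with $\|g\|_{\H^s(\IR^d)}$ arbitrarily close to $\|f\|_{\H_F^s(\Xi)}$ by definition of the quotient norm. Hence any sequence $\varphi_n$ in $\C_F^\infty(\IR^d) := \{u \in \C_c^\infty(\IR^d) : \dist(\supp(u), F) > 0\}$ with $\varphi_n \to g$ in $\H^s(\IR^d)$ restricts to a sequence in $\C_F^\infty(\Xi)$ converging to $f$ in $\H_F^s(\Xi)$. It thus suffices to prove density of $\C_F^\infty(\IR^d)$ in $\H_F^s(\IR^d)$; the case $F = \emptyset$ is classical, and in the remaining case Remark~\ref{Rem: Remarks on d-1 set}(2) allows us to assume further that $F$ is closed.

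Fix $g \in \H_F^s(\IR^d)$ and a family of smooth cut-offs $\eta_\eps \in \C^\infty(\IR^d)$ valued in $[0,1]$ with $\eta_\eps \equiv 0$ on $\{\d_F \leq \eps\}$, $\eta_\eps \equiv 1$ on $\{\d_F \geq 2\eps\}$, and $\abs{\partial^\alpha \eta_\eps} \lesssim \eps^{-\abs{\alpha}}$ for $\abs{\alpha} \leq 2$; these arise by composing a fixed $\C^\infty(\IR)$ transition profile with a regularized distance function to $F$. Lemma~\ref{Lem: Multiplication trace lemma}(1), applied at $\eps$ fixed, guarantees $\eta_\eps g \in \H^s(\IR^d)$. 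Once the key convergence $\eta_\eps g \to g$ in $\H^s(\IR^d)$ is in hand, an element of $\C_F^\infty(\IR^d)$ close to $g$ is produced by mollifying $\eta_\eps g$ at a scale strictly less than $\eps/2$ (which preserves $\supp \subseteq \{\d_F \geq \eps/2\}$) and then multiplying by a smooth cut-off at spatial infinity.

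The main obstacle is therefore the convergence $\eta_\eps g \to g$ in $\H^s(\IR^d)$. The $\L^2$-part is harmless since $(1-\eta_\eps) \to 0$ pointwise almost everywhere and dominated convergence applies. For $s = 1$, the Leibniz rule decomposes $\nabla((1-\eta_\eps) g)$ into a benign piece $(1-\eta_\eps)\nabla g$ and a critical piece $g \nabla \eta_\eps$ concentrated on $\{\eps \leq \d_F \leq 2\eps\}$, whose $\L^2$-norm is controlled by
\begin{align*}
 \int_{\{\eps \leq \d_F \leq 2\eps\}} \frac{\abs{g}^2}{\eps^2} \, \d x \lesssim \int_{\{\d_F \leq 2\eps\}} \frac{\abs{g(x)}^2}{\d_F(x)^2} \, \d x.
\end{align*}
For $s \in (\frac{1}{2},1)$, an analogous splitting of the Gagliardo double integral, together with the pointwise bound $\abs{\eta_\eps(x) - \eta_\eps(y)} \lesssim \min(1,\abs{x-y}/\eps)$, reduces matters to the \emph{fractional Hardy inequality}
\begin{align*}
 \int_{\IR^d} \frac{\abs{g(x)}^2}{\d_F(x)^{2s}} \, \d x \lesssim \|g\|_{\H^s(\IR^d)}^2 \qquad (g \in \H_F^s(\IR^d)),
\end{align*}
available for $(d-1)$-sets from the references cited in Section~\ref{Sec: Introduction}. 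Granted these Hardy bounds, dominated convergence on the shrinking set $\{\d_F \leq 2\eps\}$ concludes the proof.
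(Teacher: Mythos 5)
There is a genuine gap in your argument: the fractional Hardy inequality you invoke,
\begin{align*}
 \int_{\IR^d} \frac{\abs{g(x)}^2}{\d_F(x)^{2s}} \, \d x \lesssim \|g\|_{\H^s(\IR^d)}^2 \qquad (g \in \H_F^s(\IR^d)),
\end{align*}
is not available from the cited references in the form you need it. Those works (Dyda--V\"ah\"akangas, Edmunds--Hurri-Syrj\"anen--V\"ah\"akangas, Ihnatsyeva--V\"ah\"akangas) prove fractional Hardy inequalities for test functions compactly supported away from the exceptional set; extending such an inequality to the full space $\H_F^s(\IR^d)$ is done by density of those test functions --- which is precisely the statement under proof. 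The same circularity appears inside the paper itself: its own Hardy-type inequalities for spaces with vanishing trace (Proposition~\ref{Prop: Hardy for vanishing boundary trace II} and Theorem~\ref{Thm: Fractional Hardy for HDs}) are both established \emph{after} and \emph{using} Proposition~\ref{Prop: Smooth functions dense in HFs} and the extension operators built upon it. The rest of your argument is fine (the reduction to $\IR^d$, the cut-off construction, the Gagliardo splitting, and the final mollification), but everything hinges on this one input that you cannot take for granted at this stage.

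The paper avoids the issue entirely by a different route. It approximates the extension $g$ by $C_c^\infty(\IR^d)$ functions $g_n$ and passes to the projections $\P_F g_n = g_n - \E_F\R_F g_n$, which still converge to $g$. The point is that $\R_F g_n$ is Lipschitz on $F$ and, by Proposition~\ref{Prop: Jonsson-Wallin trace theorem}(3), $\E_F$ preserves Lipschitz continuity, so each $\P_F g_n$ has a Lipschitz representative vanishing \emph{everywhere} on $F$ (not just $\m_{d-1}$-a.e., by the $(d-1)$-set property). This enables a classical spectral-synthesis approximation in $\H^1(\IR^d)$ (Adams--Hedberg) by functions supported off $F$, followed by mollification and cut-off; the continuous embedding $\H^1 \hookrightarrow \H^s$ for $s \le 1$ then finishes the argument. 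If you want to salvage your approach you would first need an independent proof of the $\H_F^s(\IR^d)$ Hardy inequality --- for instance via a capacity argument in the spirit of Adams--Hedberg --- but that is essentially as hard as, and closely related to, the density statement itself.
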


\begin{proof}
The second part of Remark~\ref{Rem: Remarks on d-1 set} entails $\H_F^s(\Xi) = \H_{\cl{F}}^s(\Xi)$ so that without restrictions we can assume that $F$ is closed.

Obviously $\C_F^\infty(\Xi)$ is a subset of $\H_F^s(\Xi)$. To prove density, fix $f \in \H_F^s(\Xi)$ and choose an extension $g \in \H_F^s(\IR^d)$ of $f$. Let $(g_n)_n$ be a sequence from $\C_c^\infty(\IR^d)$ converging to $g$ in $\H^s(\IR^d)$. If $F = \emptyset$ then $(g_n|_{\Xi})_n \subseteq \C_F^{\infty}(\Xi)$ converges to $f$ in $\H_F^s(\Xi)$. So, for the rest of the proof assume that $F$ is a $(d-1)$-set and let $\P_F:\H^s(\IR^d) \to \H_F^s(\IR^d)$ be the projection introduced in Corollary~\ref{Cor: HFs is complemented}. Then $(\P_F g_n)_n$ converges to $\P_F g =g$ in $\H^s(\IR^d)$. Since $\H^1(\IR^d)$ continuously embeds into $\H^s(\IR^d)$ it suffices to show:
\begin{align}
\label{Smooth functions dense: Intermediate goal}
 \text{For every $n \in \IN$ there exists an $h_n \in \C_F^\infty(\IR^d)$ such that $\|h_n - \P_F g_n\|_{\H^1(\IR^d)} \leq \frac{1}{n}$.}
\end{align}
The sequence $(h_n|_{\Xi})_n$ then converges to $f$ in $\H_F^s(\Xi)$.

To establish \eqref{Smooth functions dense: Intermediate goal} fix $n \in \IN$ and note that by the third part of Proposition~\ref{Prop: Jonsson-Wallin trace theorem} the function $\P_F g_n$ has a Lipschitz continuous representative $\mathfrak{g}_n$ which by construction vanishes $\m_{d-1}$-a.e.\ on $F$. As $F$ is a $(d-1)$-set, the $\m_{d-1}$-measure of every non-empty relatively open subset of $F$ is strictly positive. Therefore $\mathfrak{g}_n$ must vanish everywhere on $F$. Now, a classical approximation result yields a function $h \in \H^1(\IR^d)$ with support in $\IR^d \setminus F$ such that $\|h-\P_F g_n\|_{\H^1(\IR^d)} \leq \frac{1}{2n}$. For a proof we refer to \cite[Thm.\ 9.1.3]{Adams-Hedberg} or to \cite[Sec.\ 9.2]{Adams-Hedberg} for an elementary argument that suffices in our case. To obtain $h_n \in \C_F^\infty(\IR^d)$ as in \eqref{Smooth functions dense: Intermediate goal} simply convolve $h$ with a smooth kernel with sufficiently small support (here the closedness of $F$ comes into play) and then multiply with a smooth cut-off function with sufficiently large support.
\end{proof}

\begin{corollary}
\label{Cor: Form domain equals HD1}
Up to equivalent norms $\V = \H_D^1(\Omega)$.
\end{corollary}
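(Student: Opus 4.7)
The plan is to prove the two set-theoretic inclusions between $\V$ and $\H_D^1(\Omega)$ with compatible norm estimates, from which the equivalence of norms follows immediately.

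The easier inclusion $\H_D^1(\Omega) \hookrightarrow \V$ rests on Proposition~\ref{Prop: Smooth functions dense in HFs}, which states that $\C_D^\infty(\Omega)$ is dense in $\H_D^1(\Omega)$. First I would note that every $u \in \C_D^\infty(\Omega)$ already lies in $\H_D^1(\Omega)$, since its defining smooth extension $v \in \C_c^\infty(\IR^d)$ with $\dist(\supp v, D) > 0$ belongs to $\H_D^1(\IR^d)$. For any such $\H_D^1(\IR^d)$-extension $g$ of $u$,
\begin{align*}
  \|u\|_\V^2 = \int_\Omega |u|^2 + |\nabla u|^2 \leq \int_{\IR^d} |g|^2 + |\nabla g|^2 = \|g\|_{\H^1(\IR^d)}^2,
\end{align*}
so taking the infimum yields $\|u\|_\V \leq \|u\|_{\H_D^1(\Omega)}$ on $\C_D^\infty(\Omega)$. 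Completeness of $\V$ propagates this inequality to the full space $\H_D^1(\Omega)$.

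For the converse inclusion $\V \hookrightarrow \H_D^1(\Omega)$ the $\H^1$-extension operator $\E_{\V}$ is the crucial tool. My strategy is to show that $\E_{\V}$ in fact takes values in the smaller space $\H_D^1(\IR^d)$, from which $u = \E_{\V} u|_\Omega \in \H_D^1(\Omega)$ with $\|u\|_{\H_D^1(\Omega)} \leq \|\E_{\V} u\|_{\H^1(\IR^d)} \lesssim \|u\|_\V$ follows at once. By continuity of the trace $\R_D \colon \H^1(\IR^d) \to \H^{1/2}(D)$ from Proposition~\ref{Prop: Jonsson-Wallin trace theorem} and density of $\C_D^\infty(\Omega)$ in $\V$, this reduces to verifying $\R_D \E_{\V} u = 0$ for $u \in \C_D^\infty(\Omega)$. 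Choosing a smooth extension $v \in \C_c^\infty(\IR^d)$ of $u$ with $\dist(\supp v, D) > 0$, the difference $w := \E_{\V} u - v$ is an $\H^1(\IR^d)$-element vanishing a.e.\ on $\Omega$, and since $\R_D v = 0$ trivially, everything reduces to the claim $\R_D w = 0$.

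The heart of the argument, and the step I expect to be the main obstacle, is the following geometric trace lemma: any $w \in \H^1(\IR^d)$ vanishing a.e.\ on $\Omega$ must satisfy $\R_D w = 0$ $\m_{d-1}$-a.e.\ on $D$. For $x_0 \in D \subseteq \cl \Omega$, the plumpness condition from Assumption~\ref{Ass: General geometric assumption on Omega}(3) provides, for each small $r > 0$, a point $y_r \in \cl{B(x_0, r)}$ with $\cl{B(y_r, \kappa r)} \subseteq \Omega$, on which $w$ vanishes. A standard Poincar\'e inequality applied on $B(x_0, 2r)$, using vanishing of $w$ on the subset $B(y_r, \kappa r)$ of uniformly positive relative volume, produces
\begin{align*}
  \left|\fint_{B(x_0, r)} w\right|^2 \leq \fint_{B(x_0, r)} |w|^2 \lesssim r^{2-d} \int_{B(x_0, 2r)} |\nabla w|^2.
\end{align*}
The measure $\mu := |\nabla w|^2 \,\d x$ is absolutely continuous with respect to Lebesgue measure, hence mutually singular with $\nu := \m_{d-1}|_D$, which is concentrated on the Lebesgue-null $(d-1)$-set $D$ and is doubling thanks to the $(d-1)$-set property. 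The Besicovitch--Radon--Nikodym differentiation theorem then forces $\mu(B(x_0, r))/\nu(B(x_0, r)) \to 0$ for $\nu$-a.e.\ $x_0 \in D$, which combined with $\nu(B(x_0, r)) \gtrsim r^{d-1}$ yields $r^{1-d} \mu(B(x_0, 2r)) \to 0$. Substituting into the Poincar\'e estimate shows $|\fint_{B(x_0, r)} w|^2 = o(r) \to 0$, so the averaging characterization of $\R_D$ in Proposition~\ref{Prop: Jonsson-Wallin trace theorem} forces $\R_D w(x_0) = 0$ $\m_{d-1}$-a.e., completing the argument.
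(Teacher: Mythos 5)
Your proof is correct and its backbone (norm equivalence on $\C_D^\infty(\Omega)$ followed by density on both sides) is the same as the paper's. The interesting difference is in the direction $\V \hookrightarrow \H_D^1(\Omega)$: the paper writes $\|f\|_{\H_D^1(\Omega)}^2 \leq \|\E_\V f\|_{\H_D^1(\IR^d)}^2$ and thereby tacitly uses that $\E_\V$ maps $\C_D^\infty(\Omega)$ into $\H_D^1(\IR^d)$, a fact the paper's statement of property ($\V$) does not assert (it only gives $\E_\V \colon \V \to \H^1(\IR^d)$) and which the paper outsources to the construction in \cite{ABHR}. You instead supply a self-contained geometric trace lemma: any $w \in \H^1(\IR^d)$ vanishing a.e.\ on $\Omega$ satisfies $\R_D w = 0$, which you then apply to $w = \E_\V u - v$ with $v$ a smooth extension. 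Your lemma is sound: plumpness gives an interior ball of comparable volume inside $B(x_0,2r)$ on which $w$ vanishes, the measure-density Poincar\'e inequality bounds $|\fint_{B(x_0,r)} w|^2 \lesssim r^{2-d}\int_{B(x_0,2r)}|\nabla w|^2$, and since $\mu = |\nabla w|^2\,\d x$ and $\nu = \m_{d-1}|_D$ are mutually singular Radon measures with $\nu$ doubling and $\nu(B(x_0,r)) \simeq r^{d-1}$, Besicovitch differentiation yields $r^{1-d}\mu(B(x_0,2r)) \to 0$ for $\nu$-a.e.\ $x_0$, hence $\R_D w(x_0) = 0$. What this buys you is independence from the particular construction of $\E_\V$: your argument shows that \emph{every} $\H^1(\IR^d)$-extension of an element of $\C_D^\infty(\Omega)$ lies in $\H_D^1(\IR^d)$, making the corollary robust. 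The paper's route is shorter but presupposes more about $\E_\V$; yours is longer but genuinely closes that step.
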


\begin{proof}
Under Assumption~\ref{Ass: General geometric assumption on Omega} there exists a bounded extension operator $\E_\V: \V \to \H_D^1(\IR^d)$ such that for every $f \in \V$ it holds $\E_\V f = f$ a.e.\ on $\Omega$, see \cite[Lem.~3.2]{ABHR}. Note that in \cite{ABHR} the space $\H_D^1(\IR^d)$ is defined as the completion of $\C_D^\infty(\IR^d)$ in the $\H^1(\IR^d)$ norm but in view of Proposition~\ref{Prop: Smooth functions dense in HFs} this definition coincides with ours. Now, each $f \in \C_D^\infty(\Omega)$ satisfies
\begin{align*}
 \|f\|_{\L^2(\Omega)}^2 + \|\nabla f \|_{\L^2(\Omega; \IC^d)}^2 
= \inf \left \{\|g\|_{\L^2(\Omega)}^2 + \|\nabla g \|_{\L^2(\Omega; \IC^d)}^2: g \in \H_D^1(\IR^d), \, g|_{\Omega} = f \right \} \leq \|f\|_{\H_D^1(\Omega)}^2,
\end{align*}
but since $f$ coincides with $\E_\V f \in \H_D^1(\IR^d)$ on $\Omega$ also
\begin{align*}
 \|f\|_{\H_D^1(\Omega)}^2 \leq \|\E_{\V}f\|_{\H_D^1(\IR^d)}^2 \lesssim \|f\|_{\V}^2 = \|f\|_{\L^2(\Omega)}^2 + \|\nabla f \|_{\L^2(\Omega; \IC^d)}^2
\end{align*}
holds. Hence, the norms of $\H_D^1(\Omega)$ and $\V$ are equivalent on $\C_D^\infty(\Omega)$. As the latter is dense in $\V$ by definition and dense in $\H_D^1(\Omega)$ by Proposition~\ref{Prop: Smooth functions dense in HFs} the conclusion follows. 
\end{proof}

\section{Main Results}
\label{Sec: Main results}

\noindent The purpose of this paper is to solve the Kato Square Root Problem for $A$, i.e.\ to prove the following theorem. Throughout, let $A$ be the elliptic operator defined in Subsection~\ref{Subsec: The Elliptic Operator}.

\begin{maintheorem}
\label{Thm: Kato}
Under Assumptions~\ref{Ass: General geometric assumption on Omega} and \ref{Ass: Ellipticity} the domain of $A^{1/2}$ coincides with the form domain $\V$ and
\begin{align*}
 \|A^{1/2} u\|_{\L^2(\Omega)} \simeq \|\nabla u \|_{\L^2(\Omega; \IC^d)} \qquad (u \in \dom(A^{1/2})).
\end{align*}
\end{maintheorem}

As already outlined in the introduction, we will deduce Theorem~\ref{Thm: Kato} from an extrapolation property of the weak Laplacian with form domain $\V$ defined in Subsection~\ref{Subsec: The Elliptic Operator}. The following theorem accounts for this strategy.

\begin{theorem}[{\cite[Thm.~3.1]{Laplace-Extrapolation-Implies-Kato}}]
\label{Thm: Kato from Laplace}
Let $\Omega \subseteq \IR^d$ be a domain, let $\V$ be a function space that contains $\C_c^\infty(\Omega)$ and that is closed under the norm $\|u\|_{\V}:= (\int_\Omega \abs{u}^2 + \abs{\nabla u}^2)^{1/2}$, and let $\Delta_\V$ be the weak Laplacian defined via the form method with underlying form domain $\V$. Suppose the following hold true.
\begin{enumerate}
\itemsep3pt
 \item[($d$)] The domain $\Omega$ is a $d$-set.
 \item[($d-1$)] The boundary $\bd \Omega$ is a $(d-1)$-set.
 \item[($\V$)] The form domain $\V$ is stable under multiplication by smooth scalar valued functions in the sense that $\varphi \V \subseteq \V$ holds for each $\varphi \in \C_c^\infty(\IR^d)$. Moreover, it has the \emph{$\H^1$ extension property}, i.e.\ there exists a bounded operator $\E_{\V}: \V \to \H^1(\IR^d)$ such that $\E_{\V} u = u$ holds a.e.\ on $\Omega$ for each $u \in \V$.
 \item[($\alpha$)] There exists an $\alpha \in (0, 1)$ such that the complex interpolation space $[\L^2(\Omega), \V]_\alpha$ coincides with $\H^\alpha(\Omega)$ up to equivalent norms.
 \item[(E)] For the \emph{same} $\alpha$ as above $\dom((-\Delta_\V)^{1/2 + \alpha/2}) \subseteq \H^{1+\alpha}(\Omega)$ holds with continuous inclusion.
\end{enumerate}
Then for any $\mu \in \L^\infty(\Omega; \IC^{d \times d})$ that belongs to an elliptic sesquilinear form on $\V$ in the sense of Assumption~\ref{Ass: Ellipticity}, the associated divergence form operator $A = -\nabla \cdot \mu \nabla$ defined via the form method with underlying form domain $\V$ has the \emph{square root property} $\dom(A^{1/2}) = \V$ together with the inhomogeneous estimate
\begin{align*}
 \|(1+A)^{1/2} u\|_{\L^2(\Omega)} \simeq \|u\|_{\L^2(\Omega)} + \|\nabla u \|_{\L^2(\Omega; \IC^d)} \qquad(u \in \dom(A^{1/2})).
\end{align*}
\end{theorem}

Let us outline the major steps in order to deduce our main result from Theorem~\ref{Thm: Kato from Laplace}. First of all Assumption~\ref{Ass: General geometric assumption on Omega} takes care of the demands ($d$) and ($d-1$), see also Remark~\ref{Rem: Remark on geometric assumptions}. The stability property of the particular form domain $\V$ under consideration in this paper is immediate from Definition~\ref{Def: Form domain} and the $\H^1$ extension property has already been discussed in the proof of Corollary~\ref{Cor: Form domain equals HD1}.

Next, the inhomogeneous estimate provided by Theorem~\ref{Thm: Kato from Laplace} already implies the -- at first sight stronger -- homogeneous estimate stated in our main result. The key observation here is that our geometric framework allows for a Poincar\'{e} inequality on $\V \cap \overline{\Rg(A)}$.

\begin{lemma}
\label{Lem: Homogeneous Kato is easy}
Let Assumptions~\ref{Ass: General geometric assumption on Omega} and \ref{Ass: Ellipticity} be satisfied. If $\dom(A^{1/2}) = \V$ holds together with the inhomogeneous estimate $\|(1+A)^{1/2} u\|_{\L^2(\Omega)} \simeq \|u\|_{\L^2(\Omega)} + \|\nabla u \|_{\L^2(\Omega; \IC^d)}$ for all $u \in \dom(A^{1/2})$ then also the homogeneous estimate $\|A^{1/2} u\|_{\L^2(\Omega)} \simeq \|\nabla u \|_{\L^2(\Omega; \IC^d)}$ holds for all such $u$.
\end{lemma}

\begin{proof}
Throughout the proof we abbreviate $\L^2$ norms by $\|\cdot\|_2$. Since $A$ is maximal accretive on the Hilbert space $\L^2(\Omega)$ there is a topological kernel-range splitting $\L^2(\Omega) = \Ke(A) \oplus \cl{\Rg(A)}$, the closure taken in $\L^2(\Omega)$, see e.g.\ \cite[Prop.~2.1.1(h)]{Haase}. For brevity put $Y:= \cl{\Rg(A)}$ and equip it with the $\L^2(\Omega)$ norm. We also need the space $X:= \V \cap Y$ which is closed under the norm $u \mapsto (\int_\Omega \abs{u}^2 + \abs{\nabla u}^2)^{1/2}$ inherited from $\V$. Its meaning stems from the global Poincar\'{e} inequality
\begin{align}
\label{Eq: Poincare}
 \|u\|_2 \lesssim  \|\nabla u\|_2 \qquad (u \in X).
\end{align}
Indeed, within the geometric framework of Assumption~\ref{Ass: General geometric assumption on Omega} and since $\V$ has the $\H^1$ extension property, a sufficient condition for this inequality is that $X$ does not contain non-zero constant functions \cite[Prop.~6.1, Rem.~6.2]{Hardy-Poincare}. But non-zero constant functions that belong to $\V$  also belong to the kernel of $A$ and thus -- by the kernel-range splitting -- cannot be contained in $X$.

Now, define $B$ as the maximal restriction of $A$ to an operator on $Y$. Since the latter space is invariant under resolvents of $A$, the operator $B$ is maximal accretive on the Hilbert space $Y$ and has domain $\dom(B) = \dom(A) \cap Y \subseteq X$. 

The sesquilinear form $\a$ associated to $A$ is elliptic, so $\|Au\|_2 \|u\|_2 \geq \lambda \|\nabla u\|_2^2$ holds for every $u \in \dom(A)$. This implies that firstly $\ker(A)$ only contains constant functions and secondly that due to \eqref{Eq: Poincare} every $w \in \dom(B)$ satisfies the a priori estimate $\|Bw\|_2 \gtrsim \|w\|_2$. Hence, $B$ is injective with closed range and the kernel-range decomposition for maximal accretive operators entails that $B$ is invertible. Note that invertibilty inherits to $B^{1/2}$. This operator is the maximal restriction of $A^{1/2}$ to $Y$ and has domain $\dom(B^{1/2}) = \dom(A^{1/2}) \cap Y$, see \cite[Prop.~2.6.5, Prop.~3.1.1]{Haase} for details. Consequently, $\|A^{1/2} w\|_2 \simeq \|w\|_2 + \|A^{1/2}w\|_2$ for all $w \in \dom(B^{1/2})$. Now, taking into account the assumptions and that $\dom(A^{1/2}) = \dom((1+A)^{1/2})$ holds up to equivalent norms, $\dom(B^{1/2}) = \V \cap Y = X$ follows with equivalences
\begin{align}
\label{Eq: Homogeneous estimate for B}
 \|A^{1/2} w\|_2 \simeq \|w\|_2 + \|\nabla w\|_2 \simeq \|\nabla w\|_2 \qquad (w \in \dom(B^{1/2})).
\end{align}
Here, the second part is due to the Poincar\'{e} inequality \eqref{Eq: Poincare}. 

In order to prove the homogeneous estimate $\|A^{1/2} u\|_2 \simeq \|\nabla u\|_2$ for $u \in \dom(A^{1/2}) = \V$ split $u = v + w$, where ad hoc $v \in \Ke(A)$ and $w \in Y$. We already know $\nabla v = 0$ and $A^{1/2}v = 0$ is immediate from the Balakrishnan Representation \eqref{Balakrishnan Representation}. Moreover, $w$ belongs to $\dom(B^{1/2}) = X = \V \cap Y$ since both $u$ and $v$ belong to $\V$. Hence, \eqref{Eq: Homogeneous estimate for B} applies and the proof is complete.
\end{proof}

Summing up, in order to prove Theorem~\ref{Thm: Kato} it remains to establish ($\alpha$) and the extrapolation property ($E$). In fact we will prove as our main result in this paper the following, considerably stronger statement. Its proof will be developed in the subsequent sections.

\begin{maintheorem}
\label{Thm: Main Result}
Let Assumptions~\ref{Ass: General geometric assumption on Omega} and \ref{Ass: Ellipticity} be satisfied and let $\Delta_\V$ be the weak Laplacian with form domain $\V$. Then 
\begin{align*}
 \dom((-\Delta_\V)^\alpha) = \H^{2\alpha}(\Omega) \qquad (\alpha \in (0,\tfrac{1}{4}))
\end{align*}
and there exists an $\eps \in (0, \frac{1}{4})$ such that 
\begin{align*}
 \dom((-\Delta_\V)^\alpha) = \H_D^{2 \alpha}(\Omega) \qquad (\alpha \in (\tfrac{1}{4}, \tfrac{1}{2} + \eps)).
\end{align*}
In particular, ($\alpha$) and (E) hold for each $\alpha \in (0, 2\eps)$.
\end{maintheorem}

\section{Extension Operators}
\label{Sec: Extension operators}

\noindent The following extension theorem is the main result of this section and at the heart of the interpolation theory for the spaces $\H_D^s(\Omega)$ built up in Section~\ref{Sec: Interpolation for HDs}. An operator $\L^2(\Omega) \to \L^2(\IR^d)$ is called \emph{bounded extension operator}, if it is a bounded right inverse for the restriction operator $\L^2(\IR^d) \to \L^2(\Omega)$.

\begin{theorem}
\label{Thm: Extension operator for HDs}
There exist bounded extension operators $\E, \E_\star: \L^2(\Omega) \to \L^2(\IR^d)$ with the following properties.
\begin{enumerate}
\itemsep3pt
 \item The operator $\E$ restricts to a bounded operator $\H^s(\Omega) \to \H^s(\IR^d)$ if $s \in (0,\frac{1}{2})$ and to a bounded operator $\H_D^s(\Omega) \to \H_D^s(\IR^d)$ if $s \in (\frac{1}{2},\frac{3}{2})$.
 \item The operator $\E_\star$ restricts to a bounded operator $\H^s(\Omega) \to \H^s(\IR^d)$ if $s \in (0,\frac{1}{2})$ and to a bounded operator $\H_D^s(\Omega) \to \H_D^s(\IR^d)$ if $s \in (\frac{1}{2},1)$.
 \item If $f \in \C_D^\infty(\Omega)$ then $\E f$ and $\E_\star f$ have continuous representatives that vanish on $D$.
 \item There is a bounded domain $\Omega_\star \subseteq \IR^d$ that contains $\Omega$ and avoids $D$ such that if $f \in \L^2(\Omega)$ vanishes a.e.\ on a neighborhood of $D$, then $\supp(\E_\star f) \subseteq \Omega_\star$.
\end{enumerate}
\end{theorem}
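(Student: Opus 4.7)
The plan is to build both operators as compositions of a local reflection across the Neumann part of $\bd\Omega$ into an enlarged $(\eps,\delta)$-domain $\Omega^\bullet$, followed by Rogers' universal extension operator from $\Omega^\bullet$ to all of $\IR^d$.

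\textbf{Geometric setup and reflection.} Using the compactness of $\cl{\bd\Omega\setminus D}$ and the bi-Lipschitz charts $\Phi_x$ from Assumption~\ref{Ass: General geometric assumption on Omega}, I would construct bounded open sets $\Omega\subsetneq\Omega_\star\subsetneq\Omega^\bullet$. The intermediate set $\Omega_\star$ arises from attaching a thin strip to $\Omega$ along the Neumann part, chosen so that $\cl{\Omega_\star}\cap D=\emptyset$; $\Omega^\bullet$ is a slightly larger thickening for which $\bd\Omega^\bullet$ decomposes as $D$ together with a locally bi-Lipschitz outer portion. The key technical task is to verify that $\Omega^\bullet$ is an $(\eps,\delta)$-domain; this combines the plumpness of $\Omega$ at points of $D$ with the bi-Lipschitz structure of the outer boundary. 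Subordinate to the charts I would set up a partition of unity and a Stein-type higher-order reflection formula to obtain a local operator $\rho:\L^2(\Omega)\to\L^2(\Omega^\bullet)$ bounded on $\H^s$ for $s\in[0,2]$. Since $\rho$ modifies $f$ only in a neighborhood of the Neumann part, $\rho f$ and $f$ agree near $D$; in particular $\rho$ preserves partial traces, meaning $\R_D\rho f=\R_D f$ whenever $s\in(\tfrac12,\tfrac32)$.

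\textbf{Construction of $\E$.} Let $\Lambda:\L^2(\Omega^\bullet)\to\L^2(\IR^d)$ denote Rogers' universal extension operator associated with the $(\eps,\delta)$-domain $\Omega^\bullet$, bounded $\H^s(\Omega^\bullet)\to\H^s(\IR^d)$ for every $s\geq 0$. Define $\E:=\Lambda\circ\rho$. Boundedness $\H^s(\Omega)\to\H^s(\IR^d)$ for $s\in(0,\tfrac12)$ is then immediate. For $s\in(\tfrac12,\tfrac32)$ and $f\in\H_D^s(\Omega)$ I must transfer the vanishing of $\R_D\rho f$ into the vanishing of $\R_D\E f$. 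This is the step I expect to be the main obstacle, since Rogers' operator does not \emph{a priori} respect partial boundary traces. I would exploit its explicit Whitney-cube form: for $x\in\IR^d\setminus\Omega^\bullet$, $\Lambda g(x)$ is a finite sum of averages of $g$ over Whitney cubes of $\Omega^\bullet$ reflected across nearby boundary points. When $x$ lies near $D\subseteq\bd\Omega^\bullet$, these reflected cubes also lie near $D$ in $\Omega^\bullet$; applying the fractional Hardy inequality for $\H_D^s(\Omega^\bullet)$ to $g=\rho f$ bounds the relevant averages by an appropriate power of $\dist(\argdot,D)$. Summing over Whitney cubes yields the estimate needed to conclude $\E f\in\H_D^s(\IR^d)$.

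\textbf{Construction of $\E_\star$ and properties (3), (4).} Choose a smooth cutoff $\eta_\star\in\C_c^\infty(\IR^d)$ with $\eta_\star\equiv 1$ on a neighborhood of $\cl{\Omega}$ and $\supp\eta_\star\subseteq\Omega_\star$, which is possible because $\cl{\Omega}\subseteq\Omega_\star$ and $\cl{\Omega_\star}\cap D=\emptyset$. Define $\E_\star:=M_{\eta_\star}\circ\E$. By Lemma~\ref{Lem: Multiplication trace lemma}, multiplication by $\eta_\star$ is bounded both on $\H^s(\IR^d)$ for $s\in(0,\tfrac12)$ and on $\H_D^s(\IR^d)$ for $s\in(\tfrac12,\tfrac32)$, which together with property~(1) yields property~(2) on the claimed ranges. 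Property~(4) is automatic from $\supp\eta_\star\subseteq\Omega_\star$. For property~(3), note that for $f\in\C_D^\infty(\Omega)$ the reflection $\rho f$ admits a continuous representative vanishing on $D$, and both $\Lambda$ and multiplication by $\eta_\star$ preserve continuity, so $\E f$ and $\E_\star f$ likewise admit continuous representatives vanishing on $D$.
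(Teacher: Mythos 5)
Your proposal hinges on the claim that a suitable thickening $\Omega^\bullet$ of $\Omega$ along the Neumann part becomes an $(\eps,\delta)$-domain, so that Rogers' theorem applies globally to $\Omega^\bullet$. This fails in the generality of Assumption~\ref{Ass: General geometric assumption on Omega}. The plumpness (corkscrew) condition is a one-point condition and does not rule out slit-type geometry along $D$: the paper explicitly admits domains such as the one in Figure~\ref{Fig: Cone}, where $\Omega$ touches itself along the dark-shaded portions of $D$. For such a domain, two points on opposite sides of the slit are close in Euclidean distance but can only be joined inside $\Omega$ by a long detour around the slit, so the $(\eps,\delta)$-connectivity condition of Theorem~\ref{Thm: Rogers extension} cannot hold near $D$ -- and enlarging $\Omega$ only along the Neumann boundary leaves this obstruction untouched, since $D\subseteq\bd\Omega^\bullet$. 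The paper's proof is designed exactly to avoid this: it invokes Rogers only on the local pieces $\Omega_j=\Omega\cap U_j$, which \emph{are} $(\eps,\delta)$-domains because they are bi-Lipschitz images of a half-cube, and handles the remaining part of the function by extending it by zero over $D$ using the fractional Hardy inequalities of Propositions~\ref{Prop: Hardy for vanishing boundary trace I} and \ref{Prop: Hardy for vanishing boundary trace II} (Theorem~\ref{Thm: Zero extension bounded}). Without some analogue of this localization-plus-zero-extension decomposition, the global Rogers step is unavailable.

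A secondary concern, even granting the $(\eps,\delta)$ step: your argument that $\Lambda$ preserves the partial vanishing trace on $D$ is only sketched, and it is doing real work. You gesture at the Whitney-cube structure of Rogers' operator plus a fractional Hardy bound, but making this rigorous would essentially reproduce the heart of the paper's Theorem~\ref{Thm: Zero extension bounded} by different means; the paper instead gets the trace property cheaply by a density argument via $\C_D^\infty(\Omega)$, continuity of $\R_D$ from Proposition~\ref{Prop: Jonsson-Wallin trace theorem}, and the fact that the locally defined pieces visibly vanish on $D$ (Steps 4--5). Your definition $\E_\star:=M_{\eta_\star}\E$ is a sensible and somewhat slicker way to get properties (3) and (4) once $\E$ is in hand, and would even give $\E_\star$ the same boundedness range as $\E$; but since $\E$ is built on the unavailable $(\eps,\delta)$ claim, this part does not rescue the argument.
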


\begin{remark}
\label{Rem: Remark on reflection extension operator}
The advantage of $\E_\star$ over $\E$ is that for the former we have control on the support of the extended functions. The full meaning of the domain $\Omega_\star$ will become clear in Section~\ref{Sec: Fractional Hardy for HDs}.
\end{remark}

\begin{corollary}
\label{Cor: HDs are reflexive}
The spaces $\H_D^s(\Omega)$, $\frac{1}{2} < s < \frac{3}{2}$, and $\H^s(\Omega)$, $0 \leq s < \frac{1}{2}$, are reflexive.
\end{corollary}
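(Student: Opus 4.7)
The plan is to realize each of the spaces in question as a complemented subspace of a reflexive Banach space and then invoke the classical fact that complemented subspaces of reflexive spaces are themselves reflexive. The case $s=0$ is trivial since $\H^0(\Omega)=\L^2(\Omega)$ is already a Hilbert space.

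First, I would record that for every $s \geq 0$ the Sobolev space $\H^s(\IR^d)$ coincides with the Bessel potential space $\H_2^s(\IR^d)$, which is a Hilbert space and therefore reflexive. For $s \in (\tfrac{1}{2},\tfrac{3}{2})$ the space $\H_D^s(\IR^d)$ was defined in Definition~\ref{Def: Sobolev spaces with vanishing trace} as the kernel of the continuous restriction operator $\R_D$, so it is a closed subspace of $\H^s(\IR^d)$ and inherits reflexivity.

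Next, directly from the definition of the quotient norm, the pointwise restriction $\mathrm{R}\colon g\mapsto g|_\Omega$ maps $\H^s(\IR^d)$ boundedly onto $\H^s(\Omega)$ for $s\in[0,\tfrac{1}{2})$, and $\H_D^s(\IR^d)$ boundedly onto $\H_D^s(\Omega)$ for $s\in(\tfrac{1}{2},\tfrac{3}{2})$. The extension operator $\E$ from Theorem~\ref{Thm: Extension operator for HDs} provides a bounded right inverse in each case, i.e.\ $\mathrm{R}\E=\Id$. Consequently $\E\mathrm{R}$ is a bounded linear projection on $\H^s(\IR^d)$ (respectively on $\H_D^s(\IR^d)$), so the range of $\E$ is a closed, complemented subspace that is topologically isomorphic to $\H^s(\Omega)$ (respectively to $\H_D^s(\Omega)$) via $\mathrm{R}$, with $\E$ as inverse.

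Since every complemented subspace of a reflexive Banach space is reflexive, the conclusion will follow for both ranges of $s$. I do not anticipate a serious obstacle: the argument is a routine combination of Theorem~\ref{Thm: Extension operator for HDs} with standard functional-analytic principles, the only point worth verifying being the boundedness of $\mathrm{R}$, which is immediate from the very definition of the quotient norms on $\H^s(\Omega)$ and $\H_D^s(\Omega)$.
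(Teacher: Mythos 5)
Your argument is correct and is essentially the same as the paper's: both realize $\H_D^s(\Omega)$ (resp.\ $\H^s(\Omega)$) as isomorphic, via the extension operator $\E$ from Theorem~\ref{Thm: Extension operator for HDs}, to a closed subspace of the reflexive space $\H_D^s(\IR^d)$ (resp.\ $\H^s(\IR^d)$). The extra remark that $\E\R$ is a bounded projection, so that the range is in fact complemented, is a harmless embellishment not needed for the conclusion.
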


\begin{proof}
Let $\frac{1}{2} < s < \frac{3}{2}$. First, $\H_D^s(\IR^d)$ is reflexive as a closed subspace of the reflexive space $\H^s(\IR^d)$. Since $\E: \H_D^s(\Omega) \to \H_D^s(\IR^d)$ is a bounded right-inverse for the restriction operator $\R: \H_D^s(\IR^d) \to \H_D^s(\Omega)$, it immediately follows that $\E$ is an isomorphism from $\H_D^s(\Omega)$ onto the closed subspace $\E(\H_D^s(\Omega))$ of $\H_D^s(\IR^d)$. The argument in the case $0 \leq s < \frac{1}{2}$ is similar.
\end{proof}

We will prove Theorem~\ref{Thm: Extension operator for HDs} in Subsection~\ref{Subsec: Proof of universal extension theorem} below. Following \cite{Rehberg-Jonsson}, the underlying strategy is:
\begin{center}
\textit{Extend by zero over $D$ and use bi-Lipschitz charts to extend over $\cl{\bd \Omega \setminus D}$.}
\end{center}
This suggests to study the zero extension operator
\begin{align*}
 \E_0: \L^2(\Omega) \to \L^2(\IR^d), \quad (\E_0 f)(x) = \begin{cases}
                                                         f(x), & \text{if $x\in \Omega$,} \\ 0, & \text{if $x \in \IR^d \setminus \Omega$,}
                                                        \end{cases}
\end{align*}
first. Recall from Remark~\ref{Rem: Remark on geometric assumptions} that $\bd \Omega$ is a $(d-1)$-set. While obviously $\E_0$ is bounded from $\L^2(\Omega)$ into $\L^2(\IR^d)$ as well as from $\H_{\bd \Omega}^1(\Omega)$ into $\H_{\bd \Omega}^1(\IR^d)$ (for the latter use that $\C_{\bd \Omega}^\infty(\Omega)$ is dense in $\H_{\bd \Omega}^1(\Omega)$, cf.\ Proposition~\ref{Prop: Smooth functions dense in HFs}) the question whether it acts boundedly between fractional Sobolev spaces is much more involved. Roughly speaking, the problem stems from the non-local norm of these spaces. Our main result on zero extensions is the following.

\begin{theorem}
\label{Thm: Zero extension bounded}
The operator $\E_0$ restricts to a bounded operator $\H^s(\Omega) \to \H^s(\IR^d)$ if $s \in [0, \frac{1}{2})$ and to a bounded operator $\H_{\bd \Omega}^s(\Omega) \to \H_{\bd \Omega}^s(\IR^d)$ if $s \in (\frac{1}{2},\frac{3}{2})$.
\end{theorem}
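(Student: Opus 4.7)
The strategy is to expand the $\H^s(\IR^d)$ norm of $\E_0 f$ via the Gagliardo double integral and to split the domain of integration into $\Omega\times\Omega$, the two cross pieces $\Omega\times\Omega^c$ and $\Omega^c\times\Omega$, and $\Omega^c\times\Omega^c$. The last piece vanishes identically, the first is controlled by the intrinsic Gagliardo seminorm on $\Omega$, which by Remark~\ref{Rem: Sobolev spaces not locally defined} is in turn bounded by the quotient norm $\|f\|_{\H^s(\Omega)}$, so only the cross pieces are delicate. The case $s = 0$ being trivial, I would proceed in the three regimes $s \in (0, \tfrac{1}{2})$, $s \in (\tfrac{1}{2}, 1)$, and $s \in [1, \tfrac{3}{2})$.

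In the range $s \in (0, \tfrac{1}{2})$, for $x \in \Omega$ the elementary inclusion $\Omega^c \subseteq \IR^d \setminus B(x, \dist_{\bd\Omega}(x))$ yields
\begin{align*}
 \int_{\Omega^c,\, |x-y|<1} \frac{\d y}{|x-y|^{d+2s}} \lesssim \dist_{\bd\Omega}(x)^{-2s},
\end{align*}
so the cross term is dominated by $\int_\Omega |f|^2 \dist_{\bd\Omega}^{-2s}$. Since $\Omega$ is a $d$-set whose boundary is a $(d-1)$-set, the \emph{fractional Hardy inequality without trace assumption} is available in this subcritical range, bounding the last display by $\|f\|_{\H^s(\Omega)}^2$.

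For $s \in (\tfrac{1}{2}, 1)$ the argument is repeated for $f$ in the dense class $\C_{\bd\Omega}^\infty(\Omega) \subseteq \H_{\bd\Omega}^s(\Omega)$ (Proposition~\ref{Prop: Smooth functions dense in HFs}); the same cross-term bound applies, and the weighted integral is now controlled by the \emph{trace-zero} fractional Hardy inequality valid for $s > \tfrac{1}{2}$. For $s \in [1, \tfrac{3}{2})$, a derivative bootstrap closes the argument: for $f \in \C_{\bd\Omega}^\infty(\Omega)$ the vanishing trace cancels the surface distribution that would otherwise appear, so
\begin{align*}
 \bd_j (\E_0 f) = \E_0(\bd_j f) \quad \text{in } \mathcal{D}'(\IR^d) \qquad (j = 1, \dots, d),
\end{align*}
and the $\H^s(\IR^d)$ norm of $\E_0 f$ reduces to the $\L^2$- and $\H^{s-1}$-norms of $\E_0 f$ and $\E_0(\bd_j f)$ with $s-1 \in [0, \tfrac{1}{2})$, where the case already treated applies. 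Membership in $\H_{\bd\Omega}^s(\IR^d)$ for the extensions is automatic, as $\E_0 f$ inherits the vanishing trace throughout.

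The main obstacle is verifying the two fractional Hardy inequalities in our geometric setting, where $\Omega$ may fail to be a Lipschitz or $(\eps, \delta)$-domain around the Dirichlet part. These are, however, precisely the results furnished by the recent fractional-Hardy literature cited in the introduction, and they apply thanks to the Ahlfors-David regularity of $\bd \Omega$ and the plumpness of $\Omega$ granted by Assumption~\ref{Ass: General geometric assumption on Omega}. A minor but harmless subtlety is the mismatch between the intrinsic Gagliardo seminorm and the quotient $\H^s(\Omega)$-norm highlighted in Remark~\ref{Rem: Sobolev spaces not locally defined}, since only the trivial direction is needed.
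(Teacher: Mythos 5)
Your proposal matches the paper's proof essentially step for step: split the Gagliardo seminorm of $\E_0 f$, bound the cross term over $\Omega\times\Omega^c$ by the weighted integral $\int_\Omega \abs{f}^2 \dist_{\bd\Omega}^{-2s}$, invoke the subcritical fractional Hardy inequality for $s<\tfrac{1}{2}$ and the trace-zero fractional Hardy inequality for $s\in(\tfrac{1}{2},1)$, and bootstrap derivatives via $\partial_j \E_0 = \E_0 \partial_j$ on $\H_{\bd\Omega}^1(\Omega)$ to cover $s\in[1,\tfrac{3}{2})$, which is exactly the paper's two-step structure. The one place you abbreviate is the assertion that $\E_0 f\in\H_{\bd\Omega}^s(\IR^d)$ is ``automatic''; the paper makes this precise by first observing it for the dense class $\C_{\bd\Omega}^\infty(\Omega)$ (respectively by noting $\E_0 f\in\H_{\bd\Omega}^1(\IR^d)$ when $s>1$) and then passing to the limit using continuity of the Jonsson--Wallin restriction operator $\R_{\bd\Omega}$, a small but necessary density argument that your phrasing leaves implicit.
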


The proof of Theorem~\ref{Thm: Zero extension bounded} is presented in the next subsection. For a clear presentation of the proofs we introduce the following notion.

\begin{definition}
\label{Def: Hs boundedness}
Let $\Xi_1, \Xi_2 \subseteq \IR^d$ be domains and $s\geq 0$. An operator $T: \L^2(\Xi_1) \to \L^2(\Xi_2)$ is called \emph{$\H^s$ bounded} if it restricts to a bounded operator from $\H^s(\Xi_1)$ into $\H^s(\Xi_2)$.
\end{definition}

\subsection{The proof of Theorem~\ref{Thm: Zero extension bounded}}
\label{Subsec: Boundedness of zero extension}

The proof of Theorem~\ref{Thm: Zero extension bounded} is divided into two consecutive steps.

\subsection*{Step 1: Fractional Hardy inequalities}

The strategy of proof is to use an intrinsic connection between $\H^s$ boundedness of $\E_0$ and the fractional Hardy inequality. This idea is taken from \cite{Hardy-Triebel-LizorkinII}.

\begin{lemma}
\label{Lem: Reduction to Hardy estimate}
For each $s \in (0,1)$ the zero extension operator $\E_0$ satisfies
\begin{align*}
 [\E_0 f]_s^2 \lesssim \iint_{\substack{x,y \in \Omega \\ \abs{x-y} < 1}} \frac{\abs{f(x)-f(y)}^2}{\abs{x-y}^{d+2s}} \; \d x \; \d y +  \int_\Omega \frac{\abs{f(x)}^2}{\dist_{\bd \Omega}(x)^{2s}} \; \d x \qquad (f \in \H^s(\Omega)).
\end{align*}
\end{lemma}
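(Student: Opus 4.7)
The plan is to split the double integral defining $[\E_0 f]_s^2$ according to whether each of $x,y$ lies in $\Omega$ or in its complement, and then identify the two surviving pieces with the two terms on the right-hand side.

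More precisely, I would write
\begin{align*}
 [\E_0 f]_s^2 = \iint_{\substack{x,y \in \Omega \\ \abs{x-y}<1}} \frac{\abs{f(x)-f(y)}^2}{\abs{x-y}^{d+2s}} \d x \d y + 2 \int_{\Omega} \int_{\substack{y \in \IR^d \setminus \Omega \\ \abs{x-y}<1}} \frac{\abs{f(x)}^2}{\abs{x-y}^{d+2s}} \d y \d x,
\end{align*}
using that $\E_0 f$ vanishes on $\IR^d \setminus \Omega$, so that the contribution of $(\IR^d \setminus \Omega) \times (\IR^d \setminus \Omega)$ is zero and the two mixed pieces are equal by symmetry (after discarding the $\m_d$-null set $\bd \Omega$, on which the inner extension differs from $\E_0 f$ only on a null set anyway). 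The first summand is precisely the first term of the claimed upper bound, so it only remains to handle the mixed piece.

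For the mixed piece, the key geometric observation is that for $x \in \Omega$ and $y \in \IR^d \setminus \cl{\Omega}$, the open segment joining $x$ and $y$ is connected, meets $\Omega$, and meets $\IR^d \setminus \cl{\Omega}$; by an intermediate value/connectedness argument it must cross $\bd \Omega$, so that
\begin{align*}
 \dist_{\bd \Omega}(x) \leq \abs{x-y}.
\end{align*}
Consequently, for fixed $x \in \Omega$, the set of admissible $y$ is contained in the annulus $\{y : \dist_{\bd \Omega}(x) \leq \abs{x-y} < 1\}$. Passing to polar coordinates centred at $x$,
\begin{align*}
 \int_{\substack{y \in \IR^d \setminus \Omega \\ \abs{x-y}<1}} \frac{\d y}{\abs{x-y}^{d+2s}} \leq \int_{\dist_{\bd \Omega}(x)}^{\infty} \frac{c_d \, r^{d-1}}{r^{d+2s}} \d r = \frac{c_d}{2s} \, \dist_{\bd \Omega}(x)^{-2s},
\end{align*}
which is finite precisely because $s > 0$. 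Multiplying by $|f(x)|^2$ and integrating over $x \in \Omega$ yields exactly the Hardy-type term on the right-hand side, up to a multiplicative constant depending only on $d$ and $s$.

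I do not anticipate any serious obstacle; the only point that requires a line of care is the measure-zero issue around $\bd \Omega$ and the connectedness argument giving $\dist_{\bd \Omega}(x) \leq \abs{x-y}$, both of which are standard. The lemma is really a bookkeeping exercise that isolates the Hardy-type contribution coming from the jump of $\E_0 f$ across $\bd \Omega$, and reduces the $\H^s$ boundedness of $\E_0$ to a fractional Hardy inequality on $\Omega$.
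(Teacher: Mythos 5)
Your proof is correct and follows essentially the same route as the paper's: the identical decomposition of $[\E_0 f]_s^2$ into the $\Omega\times\Omega$ piece and twice the mixed piece, followed by the observation $\dist_{\bd\Omega}(x) \leq \abs{x-y}$ for $x\in\Omega$, $y\notin\Omega$, and the radial integration yielding $\dist_{\bd\Omega}(x)^{-2s}$. The only cosmetic difference is that the paper phrases the key geometric fact as $B(y,\dist_{\bd\Omega}(y))\subseteq\Omega$ rather than via your connectedness argument for the segment, which is a slightly more direct way to obtain the same inequality.
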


\begin{proof}
Set $M:= \{(x,y) \in \IR^d \times \IR^d: \abs{x-y}<1\}$ and note that if $s \in (0,1)$ and $f \in \H^s(\Omega)$ then
\begin{align*}
 [\E_0 f]_s^2= \int_\Omega \int_\Omega \frac{\abs{f(x)-f(y)}^2}{\abs{x-y}^{d+2s}} \ind_M(x,y) \; \d x \; \d y + 2\int_\Omega \abs{f(y)}^2 \int_{\IR^d \setminus \Omega} \frac{1}{\abs{x-y}^{d+2s}} \ind_M(x,y)\; \d x \; \d y.
\end{align*}
Since for each $y \in \Omega$ the ball $B(y, \dist_{\bd \Omega}(y))$ is contained in $\Omega$, the desired estimate follows from
\begin{align*}
 \int_{\IR^d \setminus \Omega} \frac{1}{\abs{x-y}^{d+2s}} \ind_M(x,y)\; \d x \leq \int_{\IR^d \setminus B(y, \dist_{\bd \Omega}(y))} \frac{1}{\abs{x-y}^{d+2s}} \; \d x \simeq \frac{1}{\dist_{\bd \Omega}(y)^{2s}} \qquad(y \in \Omega). &\qedhere
\end{align*}
\end{proof}

Up to technical details, Lemma~\ref{Lem: Reduction to Hardy estimate} reduces the claim of Theorem~\ref{Thm: Zero extension bounded} to the question whether the $\L^2(\Omega)$ norm of $\abs{f}\dist_{\bd \Omega}^{-s}$ can be controlled in terms of $\|f\|_{\H^s(\Omega)}$ or $\|f\|_{\H_{\bd \Omega}^s(\Omega)}$, respectively. Such an estimate is called a \emph{fractional Hardy inequality}. The subsequent propositions summarize the state of the art concerning such inequalities in our geometric setting.

\begin{proposition}
\label{Prop: Hardy for vanishing boundary trace I}
If $s \in (0,\frac{1}{2})$ then the following fractional Hardy inequality holds true:
\begin{align*}
 \int_\Omega \frac{\abs{f(x)}^2}{\dist_{\bd \Omega}(x)^{2s}} \; \d x \lesssim \|f\|_{\H^s(\Omega)}^2 \qquad(f \in \H^s(\Omega)).
\end{align*}
\end{proposition}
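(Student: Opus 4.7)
The plan is to reduce the claim to a global fractional Hardy inequality on $\IR^d$ relative to $\bd \Omega$. Recall from Remark~\ref{Rem: Remark on geometric assumptions} that $\bd \Omega$ is a $(d-1)$-set, so that a rich body of existing Hardy estimates is available.

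The first step is to extend $f \in \H^s(\Omega)$ to a function $g \in \H^s(\IR^d)$ with $\|g\|_{\H^s(\IR^d)} \leq 2 \|f\|_{\H^s(\Omega)}$, which is possible by the very definition of the quotient norm on $\H^s(\Omega)$. Since $g = f$ a.e.\ on $\Omega$, one immediately has
\begin{align*}
 \int_\Omega \frac{\abs{f(x)}^2}{\dist_{\bd \Omega}(x)^{2s}} \, \d x
 \leq \int_{\IR^d \setminus \bd \Omega} \frac{\abs{g(x)}^2}{\dist_{\bd \Omega}(x)^{2s}} \, \d x.
\end{align*}
Thus, everything reduces to the global inequality
\begin{align}
\label{Eq: Global Hardy proposal}
 \int_{\IR^d \setminus F} \frac{\abs{g(x)}^2}{\dist_F(x)^{2s}} \, \d x \lesssim \|g\|_{\H^s(\IR^d)}^2 \qquad (g \in \H^s(\IR^d))
\end{align}
for any $(d-1)$-set $F \subseteq \IR^d$ and any $s \in (0,\frac{1}{2})$.

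The second step is to establish \eqref{Eq: Global Hardy proposal}. This is a by now classical fractional Hardy inequality for $s$ below the critical exponent $\frac{1}{2}$, and is exactly the sort of statement available in the references \cite{Hardy-Triebel-LizorkinI}, \cite{Hardy-Triebel-LizorkinII}, \cite{Edmunds-Hardy}, \cite{Dyda-Vahakangas} cited in the introduction: if the complement of the open set $\IR^d \setminus F$ supports the Ahlfors-David condition on scales $r \leq 1$, the inequality holds for all $\H^s$ functions. Conceptually one can view this as a dual formulation of the well-known fact that for $s < 1/2$ the zero extension is bounded on $\H^s(\IR^d \setminus F)$, so the Hardy inequality and the zero-extension boundedness are essentially two sides of the same coin (compare Lemma~\ref{Lem: Reduction to Hardy estimate}).

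If a self-contained argument is preferred over a direct citation, the standard approach is a Whitney decomposition $\IR^d \setminus F = \bigcup_j Q_j$ with $\ell(Q_j) \simeq \dist(Q_j, F)$. For each cube one has
\begin{align*}
 \int_{Q_j} \frac{\abs{g}^2}{\dist_F^{2s}} \, \d x \lesssim \ell(Q_j)^{-2s} \int_{Q_j} \abs{g}^2 \, \d x,
\end{align*}
and a telescoping argument along a chain of neighboring Whitney cubes accumulating at a boundary point, combined with a local Poincaré-type estimate using the Gagliardo seminorm of $g$ and the Ahlfors-David condition on $F$ (providing sufficient measure of $F$ at each scale), yields \eqref{Eq: Global Hardy proposal}. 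The key geometric input, and the only place where the $(d-1)$-set property of $F$ enters, is the thickness of $F$ that makes the zero-order term controllable once $s < \frac{1}{2}$ keeps the singular integrals convergent.

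The main obstacle, as far as novelty is concerned, is essentially none: the result is a direct application of existing global Hardy inequalities to the extension of $f$. The only care needed is to make sure to invoke a version formulated for $\H^s = \F_{2,2}^s$ and for general $(d-1)$-sets $F$ rather than, say, just Lipschitz boundaries.
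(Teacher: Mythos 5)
Your proposal is correct in spirit and follows the same basic strategy as the paper: both invoke a fractional Hardy inequality below the critical exponent $\tfrac12$, whose validity hinges on the boundary being a $(d-1)$-set. There are, however, two points worth flagging. First, the extension step is superfluous. The paper applies directly the \emph{intrinsic} fractional Hardy inequality on the bounded domain $\Omega$, namely Theorem~1.5 of \cite{Hardy-Triebel-LizorkinII}, whose right-hand side is the Gagliardo double integral over $\Omega$ plus the $\L^2(\Omega)$ norm; this is automatically dominated by the quotient norm $\|f\|_{\H^s(\Omega)}$ (Remark~\ref{Rem: Sobolev spaces not locally defined}), so no passage to $\IR^d$ is needed. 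Your global formulation \eqref{Eq: Global Hardy proposal} is correct and implies the same conclusion, but it is an extra reformulation that is not, to my knowledge, literally recorded in the cited references, whereas the intrinsic statement on bounded domains is. Second, the key geometric verification is glossed over in your write-up: the theorem in \cite{Hardy-Triebel-LizorkinII} is phrased in terms of the \emph{Aikawa dimension} of $\bd\Omega$, not the Ahlfors--David condition, and the paper explicitly invokes \cite[Lem.\ 2.1]{Lehrback-Boundarysize} to pass from the $(d-1)$-set hypothesis to the Aikawa dimension bound $d-1 < d-2s$. Your Whitney/telescoping sketch as a ``self-contained'' alternative is a correct road map, but at the level of detail given it would not yet constitute a proof; in particular one needs the chaining and the Poincar\'e step to produce precisely the Gagliardo seminorm with the right convergence at the scale $s<\tfrac12$. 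If you intend to cite rather than reprove, be precise about which theorem you use and why its hypothesis holds; if you intend to reprove, the Whitney argument needs to be filled in.
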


The proof of Proposition~\ref{Prop: Hardy for vanishing boundary trace I} is given in \cite[Thm.\ 1.2]{Hardy-Triebel-LizorkinII} under a weaker geometric assumption on $\Omega$ as in the present paper. The reader may consult \cite[Lem.\ 2.1]{Lehrback-Boundarysize} for a proof that the condition on the \emph{Aikawa dimension} of $\bd \Omega$ in \cite{Hardy-Triebel-LizorkinII} is indeed weaker than that of $\bd \Omega \subseteq \IR^d$ being a $(d-1)$-set.

In the case $s \in (\frac{1}{2},1)$ we can rely on Theorem 2 and Proposition 8 in \cite{Dyda-Vahakangas} where the fractional Hardy inequality occurring in Proposition~\ref{Prop: Hardy for vanishing boundary trace II} is proved for $f \in \C_{\bd \Omega}^\infty(\Omega)$ under the present assumptions on $\Omega$, i.e.\ that it is bounded, plump, and that its boundary is a $(d-1)$-set, cf.\ Remark~\ref{Rem: Remark on geometric assumptions}. To be precise, the reader should invoke the easy part of Frostman's lemma \cite[Thm.~5.1.12]{Adams-Hedberg} to check that the \emph{fatness condition} in \cite{Dyda-Vahakangas} is again satisfied if $\bd \Omega$ is a $(d-1)$-set. Taking into account Proposition~\ref{Prop: Smooth functions dense in HFs} we can record the following result.

\begin{proposition}
\label{Prop: Hardy for vanishing boundary trace II}
If $s \in (\frac{1}{2},1)$ then the following fractional Hardy inequality holds true:
\begin{align*}
 \int_\Omega \frac{\abs{f(x)}^2}{\dist_{\bd \Omega}(x)^{2s}} \; \d x \lesssim \int_\Omega \int_\Omega \frac{\abs{f(x) - f(y)}^2}{\abs{x-y}^{d+ 2s}} \; \d x \; \d y \lesssim \|f\|_{\H_{\bd \Omega}^s(\Omega)}^2 \qquad(f \in \H_{\bd \Omega}^s(\Omega)). 
\end{align*}
\end{proposition}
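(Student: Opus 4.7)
My plan is to treat the two inequalities in the chain separately, using density as the only glue beyond already cited results. For the rightmost inequality, which requires no trace condition, I fix $f \in \H_{\bd \Omega}^s(\Omega)$ and pick an extension $g \in \H_{\bd \Omega}^s(\IR^d)$. Restricting the Gagliardo double integral to $\Omega \times \Omega$ only makes it smaller, so
\begin{equation*}
 \iint_{\Omega \times \Omega} \frac{\abs{f(x)-f(y)}^2}{\abs{x-y}^{d+2s}} \, \d x \, \d y
 \leq \iint_{\IR^d \times \IR^d} \frac{\abs{g(x)-g(y)}^2}{\abs{x-y}^{d+2s}} \, \d x \, \d y.
\end{equation*}
For $s \in (0,1)$ the right-hand side is bounded, after splitting at $\abs{x-y} = 1$, by $[g]_s^2 + \|g\|_{\L^2(\IR^d)}^2 \lesssim \|g\|_{\H^s(\IR^d)}^2$. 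Passing to the infimum over all admissible extensions $g$ converts this to the desired bound by $\|f\|_{\H_{\bd \Omega}^s(\Omega)}^2$.

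For the leftmost inequality the trace condition becomes essential. The key external input is Theorem~2 together with Proposition~8 of \cite{Dyda-Vahakangas}: under our standing geometric hypotheses --- $\Omega$ bounded and plump by Assumption~\ref{Ass: General geometric assumption on Omega}, $\bd \Omega$ a $(d-1)$-set by Remark~\ref{Rem: Remark on geometric assumptions} --- these yield
\begin{equation*}
 \int_\Omega \frac{\abs{\varphi(x)}^2}{\dist_{\bd \Omega}(x)^{2s}} \, \d x
 \lesssim \iint_{\Omega \times \Omega} \frac{\abs{\varphi(x) - \varphi(y)}^2}{\abs{x-y}^{d+2s}} \, \d x \, \d y
\end{equation*}
for every $\varphi \in \C_{\bd \Omega}^\infty(\Omega)$. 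To promote this from smooth test functions to all of $\H_{\bd \Omega}^s(\Omega)$ I would invoke Proposition~\ref{Prop: Smooth functions dense in HFs}: approximate $f \in \H_{\bd \Omega}^s(\Omega)$ by a sequence $(\varphi_n) \subset \C_{\bd \Omega}^\infty(\Omega)$ in $\H_{\bd \Omega}^s$-norm, extract a subsequence converging to $f$ pointwise a.e.\ on $\Omega$, and pass to the limit using Fatou's lemma on the left-hand side together with the continuity of the Gagliardo double integral with respect to the $\H_{\bd \Omega}^s$-norm --- which is precisely what was established in the previous paragraph --- on the right-hand side.

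The only real obstacle, as flagged in the text preceding the statement, is bookkeeping rather than analysis: one has to check that the ``fatness'' condition on $\bd \Omega$ in \cite{Dyda-Vahakangas} is implied by $\bd \Omega$ being a $(d-1)$-set, which is done via the easy half of Frostman's lemma. Once this verification is in place, no new analytic ingredient beyond density plus Fatou is required.
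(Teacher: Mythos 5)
Your proof is correct and follows essentially the same route as the paper: the paper also relies on Theorem 2 and Proposition 8 of Dyda--V\"ah\"akangas for test functions (with the Frostman-lemma verification of the fatness condition) and then invokes Proposition~\ref{Prop: Smooth functions dense in HFs} to pass to general $f \in \H_{\bd\Omega}^s(\Omega)$ by density. You are merely a bit more explicit than the paper in writing out the rightmost inequality and the Fatou/continuity argument for the limit passage.
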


\subsection*{Step 2: $\H^s$ boundedness of $\E_0$}

The cases $s=0$ and $s = 1$ have already been discussed. If $s \in (0,\frac{1}{2})$ then Lemma~\ref{Lem: Reduction to Hardy estimate} and Proposition~\ref{Prop: Hardy for vanishing boundary trace I} yield $[\E_0 f]_s^2 \lesssim \|f\|_{\H^s(\Omega)}^2$ for each $f \in \H^s(\Omega)$ and since $\E_0$ is $\L^2$ bounded the conclusion follows. Likewise, if $s \in (\frac{1}{2},1)$ it follows from Lemma~\ref{Lem: Reduction to Hardy estimate} and Proposition~\ref{Prop: Hardy for vanishing boundary trace II} that $\E_0$ maps $\H_{\bd \Omega}^s(\Omega)$ boundedly into $\H^s(\IR^d)$ and it remains to check that in fact $\E_0 f \in \H_{\bd \Omega}^s(\IR^d)$ if $f \in \H_{\bd \Omega}^s(\Omega)$. This is certainly true if $f \in \C_{\bd \Omega}^\infty(\Omega)$ and thus follows for general $f \in \H_{\bd \Omega}^s(\Omega)$ by approximation, cf.\ Proposition~\ref{Prop: Smooth functions dense in HFs}. 

Finally, let $s \in (1,\frac{3}{2})$ and $f \in \H_{\bd \Omega}^s(\Omega) \subseteq  \H_{\bd \Omega}^1(\Omega)$. The assertion for $s=1$ yields
\begin{align*}
 \|\E_0 f\|_{\H^s(\IR^d)} = \|\E_0 f\|_{\H^1(\IR^d)} + \sum_{j=1}^d [\partial_j (\E_0f) ]_{s-1} \lesssim \|f\|_{\H_{\bd \Omega}^s(\Omega)} + \sum_{j=1}^d [\partial_j (\E_0f) ]_{s-1}.
\end{align*}
Note $\partial_j (\E_0 f) = \E_0 (\partial_j f)$ for $1\leq j \leq d$, as is obvious if $f \in \C_{\bd \Omega}^\infty(\Omega)$ and then extends to general $f \in \H_{\bd \Omega}^1(\Omega)$ by density. Since the derivation operators $\partial_j$ are bounded from $\H_{\bd \Omega}^s(\Omega)$ into $\H^{s-1}(\Omega)$, the assertion for $s-1$ yields
\begin{align*}
 [\partial_j (\E_0f) ]_{s-1} = [\E_0 (\partial_j f) ]_{s-1} \leq \|\E_0(\partial_j f)\|_{\H^{s-1}(\IR^d)} \lesssim \|\partial_j f\|_{\H^{s-1}(\Omega)} \lesssim \|f\|_{\H_{\bd \Omega}^s(\Omega)} \qquad (1 \leq j \leq d).
\end{align*}
Altogether, $\|\E_0 f\|_{\H^s(\IR^d)} \lesssim \|f\|_{\H_{\bd \Omega}^s(\Omega)}$. To conclude, note that $\E_0 f \in \E_0(\H_{\bd \Omega}^1(\Omega)) \subseteq \H_{\bd \Omega}^1(\IR^d)$ implies $\R_{\bd \Omega}(\E_0 f) = 0$, so that in fact $\E_0 f$ is in $\H_{\bd \Omega}^s(\IR^d)$. \hfill $\square$

\subsection{The proof of Theorem~\ref{Thm: Extension operator for HDs}}
\label{Subsec: Proof of universal extension theorem}

The argument is divided into six consecutive steps.

\subsection*{Step 1: Local extension operators}

Since  $\cl{\bd \Omega \setminus D}$ is compact we can, according to Assumption~\ref{Ass: General geometric assumption on Omega}, fix an open covering $\bigcup_{j=1}^n U_j$ of $\cl{\bd \Omega \setminus D}$ with the following property: For $1 \leq j \leq n$ there is a bi-Lipschitz map $\Phi_j$ from $U_j$ onto the open unit cube $(-1,1)^d$ such that 
\begin{align*}
 \Phi_j(\Omega_j) = (-1,1)^{d-1} \times (-1,0) \quad \text{and} \quad \Phi_j(\bd \Omega \cap U_j) = (-1,1)^{d-1} \times \{0\},
\end{align*}
where $\Omega_j: = \Omega \cap U_j$. We can assume that none of the sets $U_j$ is superfluous i.e.\ that $\cl{\bd \Omega \setminus D} \cap U_j \neq \emptyset$ for all $j$. With this convention $n=0$ in the case $D = \bd \Omega$.

To proceed further, we recall the following deep result of Rogers \cite[Thm.\ 8]{Rogers-Article}.

\begin{theorem}[Rogers]
\label{Thm: Rogers extension}
Let $\Xi \subseteq \IR^d$ be a domain for which there are constants $\eps, \delta > 0$ such that between each pair of points $x,y \in \Xi$ with $\abs{x-y} < \delta$ there is a rectifiable arc $\gamma \subseteq \Xi$ of length at most $\eps^{-1} \abs{x-y}$  having the property
\begin{align*}
 \d_{\bd \Xi}(z) \geq \frac{\eps \abs{x-z} \abs{y-z}}{\abs{x-y}} \qquad (z \in \gamma).
\end{align*}
Then there exists a bounded extension operator $\E: \L^2(\Xi) \to \L^2(\IR^d)$ that restricts to a bounded operator $\H^k(\Xi) \to \H^k(\IR^d)$ for each $k \in \IN$.
\end{theorem}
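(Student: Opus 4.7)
The plan is to adapt the Jones $(\eps,\delta)$-extension construction and, following Rogers, to make it independent of the Sobolev order $k$. On the complement I would build $\E$ as
\begin{equation*}
 \E f(x) = \sum_{j} \varphi_{j}(x) \, (P_{S_j^\ast} f)(x), \qquad x \in \IR^d \setminus \cl{\Xi},
\end{equation*}
where $\{S_j\}$ runs over the Whitney cubes of $\IR^d \setminus \cl{\Xi}$ close to the boundary, $\{\varphi_j\}$ is a smooth partition of unity on a fixed dilation of $\{S_j\}$, $S_j^\ast$ is a ``reflected'' interior Whitney cube of comparable size, and $P_{S_j^\ast}$ is a fixed $\L^2$-bounded local projection onto smooth functions that reproduces all polynomials. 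Crucially, $P_{S_j^\ast}$ is chosen once and for all, so a single $\E$ serves every $k$.

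First I would fix Whitney decompositions $\{Q_i\}$ of $\Xi$ and $\{S_j\}$ of $\IR^d \setminus \cl{\Xi}$ with the usual comparability of diameters and distances to the boundary, and classify $S_j$ as \emph{reflectable} when $\diam(S_j)$ is small compared to $\delta$ and $\diam(\Xi)$; remaining cubes lie far from $\cl{\Xi}$ and are suppressed by a smooth cut-off. For each reflectable $S_j$ the hypothesis supplies an arc $\gamma$ from a point of $S_j$ to a point of $\Xi$, and I would take $S_j^\ast \in \{Q_i\}$ to be an interior Whitney cube met by $\gamma$ at a point whose distance to $\bd \Xi$ is comparable to $\diam(S_j)$. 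The twisted-cone inequality $\d_{\bd \Xi}(z) \geq \eps |x-z||y-z|/|x-y|$ is exactly what guarantees such $S_j^\ast$ exists, that $\dist(S_j,S_j^\ast) \lesssim \diam(S_j)$, and that the multiplicity of $S_j \mapsto S_j^\ast$ is bounded, with constants depending only on $(d,\eps,\delta)$.

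The analytic heart is a \emph{chain lemma}: for any two touching reflectable cubes $S_j,S_{j'}$, there is a chain $S_j^\ast = R_0,R_1,\ldots,R_N = S_{j'}^\ast$ of face-sharing interior Whitney cubes with $N$ and $\sum_\ell \diam(R_\ell)$ bounded by constant multiples of $\diam(S_j)$, produced by tracking the Jones arc and listing the Whitney cubes it traverses; the twisted-cone bound ensures that the visited cubes do not shrink. With the chain in hand, I would derive the $\H^k$ estimate by writing $\partial^\alpha \E f$ on each reflectable cube through the identity $\sum_j \partial^\alpha \varphi_j \equiv 0$ for $|\alpha| \geq 1$ as a sum of differences $P_{R_\ell}f - P_{R_{\ell+1}}f$ of projections on adjacent Whitney cubes, estimate each such difference by a Poincaré-type inequality against $\|\nabla^k f\|_{\L^2}$ on the pair, telescope along the chain, and sum over $j$ using the bounded-overlap property of the two Whitney families. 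The $\L^2$-bound is then a straightforward consequence of $\L^2$-boundedness of $P_{S_j^\ast}$ and bounded overlap.

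The principal obstacle, and what makes the universal version substantially harder than Jones' $k$-dependent one, is arranging $P_{Q^\ast}$ and the Poincaré-type inequalities so that the resulting constants are finite for \emph{all} $k$ at once. Rogers handles this with a projection built from a single smoothing kernel having vanishing moments of every order, together with approximation estimates tailored to that kernel; the remainder of the argument then splits into the purely geometric chain lemma and uniform-in-$k$ bookkeeping, neither of which sees the Sobolev order explicitly.
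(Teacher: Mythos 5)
The paper does not prove this theorem at all: it is quoted as a deep result of Rogers and used as a black box (the citation \cite{Rogers-Article}, Thm.\ 8), so your sketch has to be measured against Rogers' actual argument rather than against anything in the paper. Your outline does reproduce the correct skeleton, which is Jones' $(\eps,\delta)$-extension scheme: Whitney decompositions of $\Xi$ and of $\IR^d\setminus\cl{\Xi}$, reflection of small exterior cubes to interior cubes via the twisted-cone condition, a chain lemma, a partition of unity, telescoping through $\sum_j\partial^\alpha\varphi_j\equiv 0$ for $\abs{\alpha}\geq 1$, and bounded overlap; and you correctly locate the novelty (degree independence) in replacing Jones' degree-$k$ polynomial fitting by one fixed local smoothing.

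The genuine gap is that the analytic heart is not supplied, and as described it cannot work. A fixed ``$\L^2$-bounded local projection'' whose kernel is carried by (a dilate of) the reflected cube cannot reproduce \emph{all} polynomials exactly at points of the exterior cube: testing against monomials $y^\alpha$, the required moments grow like $C^{\abs{\alpha}}$ with $C>1$ (the evaluation point lies outside the cube at distance comparable to its side length), whereas integration of $y^\alpha$ against a fixed integrable kernel on that cube is bounded independently of $\alpha$. So either the kernel is not compactly supported -- a kernel with vanishing moments of every order forces this -- and then the locality needed for the chaining argument, the tail estimates, and the order-$k$ approximation bounds become the real technical work, or polynomials are reproduced only up to a finite degree, which reintroduces exactly the $k$-dependence you are trying to remove. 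This is precisely the content of Rogers' theorem, and your proposal defers it back to Rogers (``Rogers handles this with \dots''), which makes the argument circular as a proof: it reduces to the same citation the paper already makes. Two further inaccuracies: the theorem only asserts boundedness on $\H^k$ for each fixed $k$, with constants allowed to depend on $k$, so ``uniform-in-$k$ bookkeeping'' is not the issue -- the issue is that a single operator must gain $k$ derivatives locally for every $k$; and in the chain lemma the number $N$ of cubes should be bounded by a constant depending only on $d,\eps,\delta$ (with $\sum_\ell\diam(R_\ell)\lesssim\diam(S_j)$), not ``by a constant multiple of $\diam(S_j)$'', which is not even scale-invariant.
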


\begin{remark}
\label{Rem: Rogers result I}
\begin{enumerate}
\itemsep3pt
 \item In fact Rogers' extension operator is also bounded on Sobolev spaces in the $\L^p$ scale for each $p \in [1,\infty]$. To avoid confusion let us remark that all results in \cite{Rogers-Article} are formulated for Sobolev spaces only, but throughout the $\L^p$ case $k=0$ is allowed.
 
 \item A domain satisfying the quantitative connectedness condition of Theorem~\ref{Thm: Rogers extension} is usually called \emph{$(\eps, \delta)$-domain} or \emph{locally uniform domain}. For further information and a comparison to related geometric concepts we refer to \cite{Rogers-Article, Vaisala} and references therein.

\end{enumerate}
\end{remark}

\begin{remark}
\label{Rem: Rogers result II}
The premise of Theorem~\ref{Thm: Rogers extension} is in particular satisfied for $\Xi = (-1,1)^{d-1} \times (-1,0)$: Indeed, it is straightforward -- but admittedly a little tedious -- to check that in this case for each pair $x,y \in \Xi$ the arc $\gamma$ can be constructed by first choosing a sub cube $Q_{x,y} \subseteq \Xi$ with side length $\frac{1}{\sqrt{2}} \abs{x-y}$ and then connecting both $x$ and $y$ with the center of $Q_{x,y}$ by straight lines.

Moreover, if $\Xi$ satisfies the premise of Theorem~\ref{Thm: Rogers extension} then so does every bi-Lipschitz image of it. As connecting arcs in the image of $\Xi$ simply take the images of the connecting arcs in $\Xi$. In particular, Theorem~\ref{Thm: Rogers extension} applies to $\Xi = \Omega_j$ for $1 \leq j \leq n$. A refinement of this argument yields the well-known fact that every bounded Lipschitz domain is an $(\eps,\delta)$-domain, cf.\ \cite[Ch.\ 3]{Triebel-Wavelets}. 
\end{remark}

If only a bounded extension operator for first order Sobolev spaces is needed, we can rely on an easy reflection technique instead: 

\medskip

\begin{center}
\begin{minipage}{0.55\textwidth}
 \textit{Transform $\Omega_j$ to the lower half-cube, extend to the unit cube by even reflection and transform back to $U_j$.}
\end{minipage}
\end{center}

\smallskip

\noindent This has the advantage of a control on the extended function outside of $\Omega$ needed later on for the construction of $\E_\star$. More precisely we have the following lemma whose easy proof is omitted.

\begin{lemma}
\label{Lem: Reflection extension}
Let $1 \leq j \leq n$ and denote by 
\begin{align*}
 \mathfrak{S}: \L^2((-1,1)^{d-1} \times (-1,0)) \to \L^2((-1,1)^d), \quad (\mathfrak{S} f)(x) = f(x_1,\ldots, x_{d-1}, -\sgn(x_d)x_d)
\end{align*} 
the extension operator by even reflection. Then
\begin{align*}
 \E_{\star,j}: \L^2(\Omega_j) \to \L^2(U_j), \quad (\E_{\star,j} f)(x) = \mathfrak{S}(f\circ \Phi_j^{-1})(\Phi_j(x))
\end{align*}
is a bounded extension operator that maps $\H^1(\Omega_j)$ boundedly into $\H^1(U_j)$.
\end{lemma}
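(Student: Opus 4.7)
The plan is to handle the two building blocks separately---first the even reflection $\mathfrak{S}$ on the model half-cube, then the bi-Lipschitz transport via $\Phi_j$---and to assemble them at the end through the factorization $\E_{\star,j} f = \bigl(\mathfrak{S}(f \circ \Phi_j^{-1})\bigr) \circ \Phi_j$.

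For the reflection operator $\mathfrak{S}$, first I would observe by the change of variables $x_d \mapsto -x_d$ on the upper half that $\|\mathfrak{S} f\|_{\L^2((-1,1)^d)}^2 = 2 \|f\|_{\L^2((-1,1)^{d-1} \times (-1,0))}^2$, which simultaneously yields $\L^2$-boundedness and the extension property. For the $\H^1$-estimate I would rely on the classical calculation, performed first on $f \in \C^\infty(\cl{(-1,1)^{d-1} \times (-1,0)})$---which is dense in $\H^1$ of the lower half-cube---that the weak derivatives of $\mathfrak{S} f$ on the full cube are
\[
 \partial_i (\mathfrak{S} f) = \mathfrak{S}(\partial_i f) \quad (1 \leq i < d), \qquad \partial_d (\mathfrak{S} f)(x) = -\sgn(x_d)\, \mathfrak{S}(\partial_d f)(x).
\]
The verification is a routine integration by parts against a test function $\varphi \in \C_c^\infty((-1,1)^d)$: splitting the integral at $x_d = 0$ and substituting $x_d \mapsto -x_d$ in the upper half, the boundary contributions on $(-1,1)^{d-1} \times \{0\}$ cancel because the trace of $f$ from the lower half matches the trace of its even reflection from the upper half. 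Passing to the limit in the density gives the desired estimate on all of $\H^1((-1,1)^{d-1} \times (-1,0))$.

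For the bi-Lipschitz transport I would invoke Rademacher's theorem, so that $\Phi_j$ and $\Phi_j^{-1}$ are a.e.\ differentiable with essentially bounded Jacobians that are also bounded away from zero. The standard change of variables then shows that $g \mapsto g \circ \Phi_j$ is a bounded isomorphism $\L^2((-1,1)^d) \to \L^2(U_j)$ and, via the chain rule applied after smoothing, a bounded isomorphism $\H^1((-1,1)^d) \to \H^1(U_j)$; the analogous statement holds for $\Phi_j^{-1} \colon \Omega_j \to (-1,1)^{d-1} \times (-1,0)$. Composing these with the reflection operator from the previous paragraph yields the $\L^2$- and $\H^1$-boundedness of $\E_{\star,j}$, and the extension property $\E_{\star,j} f = f$ a.e.\ on $\Omega_j$ is immediate from the corresponding property of $\mathfrak{S}$ together with $\Phi_j^{-1} \circ \Phi_j = \Id$ on $\Omega_j$.

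The only marginally nontrivial point I foresee is the $\H^1$-identity for $\mathfrak{S}$ with its brief trace-matching integration by parts; everything else is mechanical from the bi-Lipschitz nature of $\Phi_j$, which justifies the author's decision to omit the proof.
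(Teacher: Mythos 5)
The paper actually omits this proof outright (``whose easy proof is omitted''), so there is no proof in the paper to compare against; your write-up supplies precisely the details the authors have in mind. Your argument is correct: the factorization through $\mathfrak{S}$ and the bi-Lipschitz charts, the weak-derivative formulas $\partial_i(\mathfrak{S}f)=\mathfrak{S}(\partial_i f)$ for $i<d$ and $\partial_d(\mathfrak{S}f)(x)=-\sgn(x_d)\,\mathfrak{S}(\partial_d f)(x)$ verified by integration by parts with trace-matching at $\{x_d=0\}$, and the Rademacher/chain-rule transport are exactly the standard route. One point worth being slightly more explicit about, given that the paper defines $\H^1(\Xi)$ as a restriction (quotient) space: your computation controls the \emph{intrinsic} $\H^1$ norm of $\E_{\star,j}f$ on $U_j$, so you should note that both the half-cube and $U_j$ (as a bi-Lipschitz image of a cube) are $\H^1$-extension domains, whence the intrinsic and quotient $\H^1$ norms are equivalent and the conclusion transfers to the spaces as defined in the paper. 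This is routine, and consistent with the authors' decision to omit the proof.
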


\subsection*{Step 2: Construction and $\H^s$ boundedness of $\E$.}

First, fix bounded extension operators $\E_j: \L^2(\Omega_j) \to \L^2(\IR^d)$, $1 \leq j \leq n$, according to Theorem~\ref{Thm: Rogers extension}. Also fix a cut-off function $\eta \in \C_c^\infty(\IR^d)$ that is identically one in a neighborhood of $\cl{\bd \Omega \setminus D}$ and has its support in $\bigcup_{j=1}^n U_j$. Let $\eta_1, \ldots, \eta_n$ be a smooth partition of unity on $\supp(\eta)$ subordinated to $U_1, \ldots, U_n$. Finally, take cut-off functions $\chi_j \in \C_c^\infty(U_j)$, $1 \leq j \leq n$, with $\chi_j$ identically one on $\supp(\eta_j)$. With this notation put
\begin{align}
\label{Definition of universal extension operator}
 \E: \L^2(\Omega) \to \L^2(\IR^d), \quad \E f = \E_0 ((1-\eta)f) + \sum_{j = 1}^n \chi_j \E_j(\eta_j \eta f),
\end{align}
where $\E_0$ is the zero extension operator introduced at the beginning of Section~\ref{Sec: Extension operators}. Note that $\E$ is indeed an extension operator since for $f \in \L^2(\Omega)$ the restriction of $\E f$ to $\Omega$ coincides with
\begin{align*}
 (1-\eta)f + \sum_{j = 1}^n \chi_j \eta_j \eta f = (1-\eta)f + \sum_{j = 1}^n \eta_j \eta f = (1-\eta)f + \eta f = f.
\end{align*}
In the remainder of this step we prove that $\E$ restricts to a bounded operator $\H_D^s(\Omega) \to \H^s(\IR^d)$ if $s \in (\frac{1}{2},\frac{3}{2})$. That $\E$ in fact maps  $\H_D^s(\Omega)$ into $\H_D^s(\IR^d)$ is postponed until Step 5. Upon replacing the symbol $\H_F^s$ by $\H^s$ for any $(d-1)$-set $F$ occurring in the following, literally the same argument shows that $\E$ restricts to a bounded operator $\H^s(\Omega) \to \H^s(\IR^d)$ if $s \in [0,\frac{1}{2})$. 

For the rest of the proof fix $f \in \H_D^s(\Omega)$. Throughout, implicit constants may depend on all other parameters but on $f$.

Since $1-\eta$ vanishes on $\bd \Omega \setminus D$, the multiplication operator associated to $1-\eta$ maps $\H_D^s(\Omega)$ boundedly into $\H_{\bd \Omega}^s(\Omega)$, cf.\ Lemma~\ref{Lem: Multiplication trace lemma}. Invoking Theorem~\ref{Thm: Zero extension bounded}, we find
\begin{align}
\label{First estimate universal extension operator}
\|\E_0 ((1-\eta)f)\|_{\H^s(\IR^d)} \lesssim \|(1-\eta)f \|_{\H_{\bd \Omega}^s(\Omega)} \lesssim \|f\|_{\H_D^s(\Omega)}.
\end{align}
Concerning the remaining terms in \eqref{Definition of universal extension operator} note that for $1 \leq j \leq n$ Lemma~\ref{Lem: Multiplication trace lemma} yields
\begin{align*}
 \|\eta_j \eta f\|_{\H^s(\Omega_j)} \leq \|\eta_j \eta f\|_{\H^s(\Omega)} \lesssim \|f\|_{\H^s(\Omega)} \leq \|f\|_{\H_D^s(\Omega)}
\end{align*}
and
\begin{align*}
 \|\chi_j\E_j(\eta_j \eta f)\|_{\H^s(\IR^d)} \lesssim \|\E_j(\eta_j \eta f)\|_{\H^s(\IR^d)}
\end{align*}
since $\eta_j \eta$ and $\chi_j$ are smooth and compactly supported. Hence, the only task is to prove $\H^s$ boundedness of $\E_j$. By construction $\E_j$ is $\H^k$ bounded if $k = 0,2$. Since the restriction operators $\H^k(\IR^d) \to \H^k(\Omega_j)$ are bounded, the retraction-coretraction theorem \cite[Sec.\ 1.2.4]{Triebel} together with the complex interpolation result $[\L^2(\IR^d), \H^2(\IR^d)]_{s/2}~=~\H^s(\IR^d)$, see e.g.\ \cite[Thm.\ 6.4.5]{Bergh-Loefstroem}, yields that $\E_j(\H^s(\Omega_j))$ is a closed subspace of $\H^s(\IR^d)$ and that
\begin{align}
\label{Third estimate universal extension operator}
 \E_j: \big[\L^2(\Omega_j), \H^2(\Omega_j)\big]_{s/2} \to \E_j(\H^s(\Omega_j)) 
\end{align}
is an isomorphism. Hence, $\H^s(\Omega_j)$ and $[\L^2(\Omega_j), \H^2(\Omega_j)]_{s/2}$ coincide as sets and due to
\begin{align*}
 \|h\|_{\H^s(\Omega_j)} \leq \|\E_j h \|_{\H^s(\IR^d)} \lesssim \|h\|_{[\L^2(\Omega_j), \H^2(\Omega_j)]_{s/2}} \qquad (h \in \H^s(\Omega_j))
\end{align*}
and the bounded inverse theorem they also coincide as Banach spaces. Now, \eqref{Third estimate universal extension operator} yields $\H^s$ boundedness of $\E_j$ and the boundedness of $\E: \H_D^s(\Omega) \to \H^s(\IR^d)$ follows.

\subsection*{Step 3: Construction and $\H^s$ boundedness of $\E_\star$.}

For the construction of $\E_\star$ we rely on the same pattern as for $\E$ but use $\E_{\star,j}$, $1 \leq j \leq n$, defined in Lemma~\ref{Lem: Reflection extension} as local extension operators. Since these are only extension operators from $\L^2(\Omega_j)$ into $\L^2(U_j)$, we introduce the respective zero extension operators $\E_{0,j}: \L^2(U_j) \to \L^2(\IR^d)$. With $\eta$, $\eta_j$, and $\chi_j$ as in Step 2 we then put
\begin{align}
\label{Definition reflection extension operator}
 \E_\star: \L^2(\Omega) \to \L^2(\IR^d), \quad \E_\star f = \E_0 ((1-\eta)f) + \sum_{j = 1}^n \E_{0,j}(\chi_j \E_{\star,j}(\eta_j \eta f)).
\end{align}
In analogy with Step 2 we focus on $s \in (\frac{1}{2},1)$ and prove that $\E_\star$ restricts to a bounded operator $\H_D^s(\Omega) \to \H^s(\IR^d)$. The zero extension term in \eqref{Definition reflection extension operator} has already been taken care of in \eqref{First estimate universal extension operator} so that it suffices to consider the terms containing $\E_{\star,j}$. 

For $k=0,1$ Lemmas \ref{Lem: Multiplication trace lemma} and \ref{Lem: Reflection extension} yield that $M_{\chi_j} \E_{\star,j} M_{\eta_j \eta}$ is bounded from $\H^k(\Omega_j)$ into $\H^k(U_j)$. Here, as usual, $M$ denotes the corresponding multiplication operator. Since $\chi_j$ has compact support in $U_j$ it follows that $\E_{0,j}M_{\chi_j} \E_{\star,j} M_{\eta_j \eta}$ maps $\H^k(\Omega_j)$ boundedly into $\H^k(\IR^d)$. Due to $s<1$ the same interpolation argument as in Step 2 reveals $[\L^2(\Omega_j), \H^1(\Omega_j)]_s = \H^s(\Omega_j)$ if one relies on the $\H^1$ boundedness of $\E_j$ rather than on its $\H^2$ boundedness. Hence, by complex interpolation, $\E_{0,j} M_{\chi_j} \E_{\star,j} M_{\eta_j \eta}$ maps $\H^s(\Omega_j)$ boundedly into $\H^s(\IR^d)$, i.e.\ 
\begin{align*}
 \|\E_{0,j}(\chi_j \E_{\star,j}(\eta_j \eta f))\|_{\H^s(\IR^d)} \lesssim \|f\|_{\H^s(\Omega_j)} \leq \|f\|_{\H_D^s(\Omega)} \qquad (f \in \H_D^s(\Omega)).
\end{align*}
Going back to \eqref{Definition reflection extension operator} the boundedness of $\E_\star: \H_D^s(\Omega) \to \H^s(\IR^d)$ follows.

\subsection*{Step 4: $\E$ and $\E_\star$ map test functions to continuous functions that vanish on $D$}

The purpose of this step is to prove the third part of Theorem~\ref{Thm: Extension operator for HDs}. To this end, let $f \in \C_D^\infty(\Omega)$. Recall from \eqref{Definition of universal extension operator} that $\E f$ is given by
\begin{align*}
 \E f = \E_0 ((1-\eta)f) + \sum_{j = 1}^n \chi_j \E_j(\eta_j \eta f),
\end{align*}
where $\eta$ is smooth and identically one in a neighborhood of $\cl{\bd \Omega \setminus D}$, the functions $\chi_j$ and $\eta_j$ are smooth, and the local extension operators $\E_j: \L^2(\Omega_j) \to \L^2(\IR^d)$ are chosen according to Theorem~\ref{Thm: Rogers extension}. Due to $(1-\eta) f \in \C_c^\infty(\Omega)$ its zero extension $\E_0 ((1-\eta)f)$ is smooth on $\IR^d$. For $1 \leq j \leq n$ note that $\eta_j \eta f$ is in $\H^k(U_j)$ for each $k \in \IN$ and hence that $\chi_j \E_j(\eta_j \eta f)$ is in $\H^k(\IR^d)$ for each $k \in \IN$ thanks to Theorem~\ref{Thm: Rogers extension}. Choosing $k$ large enough it follows by Sobolev embeddings that $\chi_j \E_j(\eta_j \eta f)$ has a continuous representative; and thus so has $\E f$.

To prove that $\E_\star f$ has a continuous representative is even easier. Instead of Sobolev embeddings simply use that even reflection from the lower half to the full unit cube preserves continuity.

Finally, let $\mathfrak{f}$ be the continuous representative for $\E f$ and $\E_\star f$, respectively. By assumption there is an open set $U \supseteq D$ such that $f = 0$ a.e.\ on $U \cap \Omega$. Thus, $\mathfrak{f}$ vanishes on $U \cap \Omega$. Since every point $x \in D$ is an accumulation point of $U \cap \Omega$ it follows by continuity that $\mathfrak{f}$ vanishes on $D$.

\subsection*{Step 5: $\E$ and $\E_\star$ map into spaces with vanishing trace on $D$}

To conclude the proof of the first two items of Theorem~\ref{Thm: Extension operator for HDs} we have yet to show that $\E$ and $\E_\star$ in fact map $\H_D^s(\Omega)$ into $\H_D^s(\IR^d)$ if $s \in (\frac{1}{2}, \frac{3}{2})$ and $s \in (\frac{1}{2},1)$, respectively. Since the proofs are almost the same we concentrate on $\E$. Also, only the case $D \neq \emptyset$ is of interest.

Let $s \in (\frac{1}{2}, \frac{3}{2})$, $f \in \H_D^s(\Omega)$, and pick some $t \in (\frac{1}{2},1)$ not larger than $s$. Use Proposition~\ref{Prop: Smooth functions dense in HFs} to approximate $f$ in $\H_D^t(\Omega)$ by a sequence $(f_n)_n \subseteq \C_D^\infty(\Omega)$. Step~2 infers that $(\E f_n)_n$ converges to $\E f$ in $\H^t(\IR^d)$. Thanks to Step 4 each $\E f_n$ has a continuous representative that vanishes on $D$. Hence, $\R_D \E f_n = 0$ for each $n \in \IN$ and therefore $\R_D \E f =0$ by continuity of $\R_D$, see Proposition~\ref{Prop: Jonsson-Wallin trace theorem}. But this exactly means that $\E f$ not only belongs to $\H^s(\IR^d)$ but to $\H_D^s(\IR^d)$.

\subsection*{Step 6: The support property of $\E_\star$.}

To prove the last item of Theorem~\ref{Thm: Extension operator for HDs} let $f \in \L^2(\Omega)$ be such that there is an open set $U \supseteq D$ with $f=0$ a.e.\ on $\Omega \cap U$. Then $(1-\eta)f$ has compact support in $\Omega$ and clearly so has $\E_0(1-\eta)f$. If $1 \leq j \leq n$ then $\eta \eta_j$ has compact support in $U_j$. Hence, $\E_{\star,j}(\eta \eta_j f)$ has compact support in $U_j \setminus D$ by construction of $\E_{\star,j}$, see Lemma~\ref{Lem: Reflection extension}, and the same remains true for $\E_{0,j}(\chi_j \E_{\star,j}(\eta \eta_j f))$. In a nutshell, $\E_\star f$ has compact support in
\begin{align*}
  \Omega_\star: = \Omega \cup \bigcup_{j=1}^n (U_j \setminus D),
\end{align*}
see \eqref{Definition reflection extension operator}.
Clearly $\Omega_\star$ is open, contains $\Omega$ and avoids $D$. The sets $U_j \setminus D$ are contained in bi-Lipschitz images of the open unit cube and therefore are bounded. Hence, $\Omega_\star$ is bounded and it remains to show that it is connected. Since the union of connected sets with a common point is again connected, it suffices to show that for $1 \leq j \leq n$ the set $U_j \setminus D$ is connected and has non-empty intersection with $\Omega$.

By construction $U_j$ intersects $\cl{\bd \Omega \setminus D}$. Since $U_j$ is open it must intersect both $\Omega$ and $\bd \Omega \setminus D$. The latter implies
that $\Phi_j(U_j \setminus D) \subseteq (-1,1)^d$ does not only contain the lower and upper open half of the unit cube but also a point from their common frontier $(-1,1)^{d-1} \times \{0\}$. From this it follows that $\Phi_j(U_j \setminus D)$ is (arcwise) connected and by continuity of $\Phi_j^{-1}$ the same holds for $U_j \setminus D$. This completes the proof of Theorem~\ref{Thm: Extension operator for HDs}. \hfill $\square$

\section{A Fractional Hardy Type Inequality}
\label{Sec: Fractional Hardy for HDs}

\noindent The result we want to prove in this section is the following fractional Hardy type inequality for functions that, in contrast to the inequalities presented in Subsection~\ref{Subsec: Boundedness of zero extension}, only vanish on the Dirichlet part $D$ of the boundary of $\Omega$.

\begin{theorem}
\label{Thm: Fractional Hardy for HDs}
If $s \in (\frac{1}{2},1)$ then the following fractional Hardy type inequality holds true:
\begin{align}
\label{Eq: Fractional Hardy for HDs}
 \int_\Omega \frac{\abs{f(x)}^2}{\dist_{D}(x)^{2s}} \; \d x \lesssim \|f\|_{\H_D^s(\Omega)}^2 \qquad (f \in \H_D^s(\Omega)).
\end{align}
\end{theorem}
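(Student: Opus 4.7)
My plan is to reduce \eqref{Eq: Fractional Hardy for HDs}, via the reflection-type extension $\E_\star$ of Theorem~\ref{Thm: Extension operator for HDs}, to a full-boundary fractional Hardy inequality on the auxiliary domain $\Omega_\star$ -- a bounded domain that contains $\Omega$ and avoids $D$. By Proposition~\ref{Prop: Smooth functions dense in HFs} it is enough to treat $f \in \C_D^\infty(\Omega)$, and any such $f$ vanishes in a neighborhood of $D$ by Definition~\ref{Def: CFinfty}. This triggers the support property in Theorem~\ref{Thm: Extension operator for HDs}(4): the function $g := \E_\star f$ lies in $\H_D^s(\IR^d)$, agrees with $f$ on $\Omega$, and has $\supp g \subseteq \Omega_\star$. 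Inspecting the cutoff structure in \eqref{Definition reflection extension operator} shows that $g$ is in fact compactly supported inside $\Omega_\star$, so it vanishes in a neighborhood of $\bd\Omega_\star$. In particular $g \in \H_{\bd\Omega_\star}^s(\IR^d)$ and $\|g\|_{\H^s(\IR^d)} \lesssim \|f\|_{\H_D^s(\Omega)}$ by the boundedness of $\E_\star$ stated in Theorem~\ref{Thm: Extension operator for HDs}(2).

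The key observation coupling the two Hardy estimates is that $D \subseteq \bd\Omega_\star$, whence pointwise $\dist_{\bd\Omega_\star}(x) \leq \dist_D(x)$. Provided a version of Proposition~\ref{Prop: Hardy for vanishing boundary trace II} is available on the domain $\Omega_\star$ itself, it would apply to $g$ and give
\begin{align*}
 \int_{\Omega_\star}\frac{|g(x)|^2}{\dist_{\bd\Omega_\star}(x)^{2s}}\,\d x \;\lesssim\; \|g\|_{\H_{\bd\Omega_\star}^s(\Omega_\star)}^2 \;\leq\; \|g\|_{\H^s(\IR^d)}^2 \;\lesssim\; \|f\|_{\H_D^s(\Omega)}^2.
\end{align*}
Combining this with $f = g$ on $\Omega \subseteq \Omega_\star$ and the pointwise bound $\dist_{\bd\Omega_\star} \leq \dist_D$ then produces \eqref{Eq: Fractional Hardy for HDs} at once.

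The main obstacle is thus the verification that $\Omega_\star$ itself falls under the scope of Proposition~\ref{Prop: Hardy for vanishing boundary trace II}: one has to check that $\Omega_\star$ is bounded, plump, and has a $(d-1)$-set boundary. Boundedness is already part of Theorem~\ref{Thm: Extension operator for HDs}(4); the $(d-1)$-set property of $\bd\Omega_\star$ should follow from Remark~\ref{Rem: Remarks on d-1 set}(4) once $\bd\Omega_\star$ is decomposed into the portion of $D$ it contains together with bi-Lipschitz images of faces of the unit cubes supplied by the charts $\Phi_j$; and plumpness of $\Omega_\star$ should be inherited from that of $\Omega$ via those same charts. Carrying out these geometric verifications carefully -- especially along the Dirichlet part, where no global Lipschitz structure is available -- is, in my view, the delicate technical heart of the argument; the rest of the proof is bookkeeping.
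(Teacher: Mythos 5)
Your overall strategy — reduce to $f\in\C_D^\infty(\Omega)$, extend by $\E_\star$, bound $\dist_D$ from below by the distance to the boundary of an auxiliary domain, and apply a fractional Hardy inequality there — is the same route the paper takes. But the specific plan has a genuine gap in the two verifications you yourself flag as ``the delicate technical heart,'' and those checks would in fact fail.

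First, $\Omega_\star$ is the wrong auxiliary domain. Its boundary $\bd\Omega_\star$ has no clean description: it includes $D$, but also portions of the chart boundaries $\bd U_j$, and nothing forces the $(d-1)$-set \emph{lower} bound to hold near such pieces (a subset of a $(d-1)$-set need not be a $(d-1)$-set). This is precisely why the paper replaces $\Omega_\star$ by the larger domain $\Omega_\bullet$, the union of \emph{all} connected open subsets of a large ball $B$ that contain $\Omega$ and avoid $D$; Lemma~\ref{Lem: Boundary of Omega-bullet} then gives the sharp statement $\bd\Omega_\bullet=D$ or $\bd\Omega_\bullet=\bd B\cup D$, which is manifestly a $(d-1)$-set. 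Second — and this is the deeper issue — neither $\Omega_\star$ nor $\Omega_\bullet$ is plump in general, and the paper does not attempt to prove plumpness of any auxiliary domain. Since Proposition~\ref{Prop: Hardy for vanishing boundary trace II} (the Dyda--V\"ah\"akangas inequality) requires plumpness, it simply cannot be invoked on $\Omega_\star$ or $\Omega_\bullet$. The paper instead proves a different inequality, Proposition~\ref{Prop: Hardy for testfunctions on Omega-bullet}, valid for compactly supported test functions in any domain whose complement is $(s,2)$-uniformly fat; the price is a double integral over $\IR^d\times\IR^d$ on the right (harmless here, since one controls it by $\|\E_\star f\|_{\H^s(\IR^d)}^2$), and the gain is that the hypothesis requires no plumpness at all — only that $\bd\Omega_\bullet$ be a $(d-1)$-set, via the chain through Lemma~\ref{Lem: d-1 set implies boundary density condition}, Proposition~\ref{Prop: Boundary density implies thickness} and Proposition~\ref{Prop: Thickness implies fatness}. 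In short: you need to swap $\Omega_\star$ for $\Omega_\bullet$ and swap Proposition~\ref{Prop: Hardy for vanishing boundary trace II} for Proposition~\ref{Prop: Hardy for testfunctions on Omega-bullet}; as it stands, the geometric verifications your proof depends on are not provable.
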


Since the statement of Theorem~\ref{Thm: Fractional Hardy for HDs} is void if $D = \emptyset$, we exclude this case for the entire section. The proof of Theorem~\ref{Thm: Fractional Hardy for HDs} extends the ideas of \cite[Sec.\ 6]{ABHR}, where a Hardy type inequality for first order Sobolev spaces with partially vanishing boundary traces was shown. 

The following concept of \emph{fat sets} turned out to be essential in the area of Hardy inequalities, see e.g.\ \cite{Lewis-FatSets}, \cite{Lehrback-PointwiseHardy}, \cite{Boundary-Visibility}. First, the \emph{Riesz kernels} of order $s>0$ on $\IR^d$ are given by $I_s(x):= \abs{x}^{s-d}$. If $0< 2s < d$ define the \emph{$(s,2)$-outer capacity} of subsets $E \subseteq \IR^d$ by
\begin{align*}
 R_{s,2}(E):= \inf \left\{ \|f\|_{\L^2(\IR^d)}^2: \text{$f \geq 0$ on $\IR^d$ and $f \ast I_s \geq 1$ on $E$} \right\}.
\end{align*}
A set $E \subseteq \IR^d$ is then called \emph{$(s,2)$-uniformly fat} if
\begin{align*}
 R_{s,2}(E \cap B(x,r)) \gtrsim r^{d-2s} \qquad (x \in E, \, r>0).
\end{align*}
Finally, the $(d-1)$-dimensional \emph{Hausdorff content} of $E \subseteq \IR^d$ is
\begin{align*}
 \m_{d-1}^\infty(E):= \inf \Big \{ \sum_{j= 1}^\infty r_j^{d-1}: x_j \in E, \, r_j > 0, \, E \subseteq \bigcup_{j=1}^\infty B(x_j, r_j) \Big \}.
\end{align*}

Next, let us quote the deep results from geometric measure theory that relate $(s,2)$-uniformly fat sets to our geometric setting.

\begin{proposition}[{\cite[Prop.\ 3.11]{Boundary-Visibility}}]
\label{Prop: Thickness implies fatness}
If the complement of a domain $\Xi \subseteq \IR^d$ satisfies the \emph{thickness condition}
\begin{align}
\label{Thickness condition}
 \m_{d-1}^\infty(\Xi^c \cap B(x,r)) \gtrsim r^{d-1} \qquad (x \in \Xi^c,\, r > 0) 
\end{align}
then it is $(s,2)$-uniformly fat for each $1 < 2s <d$.
\end{proposition}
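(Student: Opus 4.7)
The plan is to exploit the duality between the Riesz $(s,2)$-capacity and positive measures. Fix $x_0 \in \Xi^c$ and $r > 0$ and set $E := \Xi^c \cap B(x_0, r)$, so that the thickness hypothesis reads $\m_{d-1}^\infty(E) \gtrsim r^{d-1}$. First, I would apply Frostman's lemma (see, e.g., \cite[Thm.\ 5.1.12]{Adams-Hedberg}) to produce a finite positive Borel measure $\mu$ supported on $E$ with total mass $\mu(E) \gtrsim r^{d-1}$ obeying the density bound $\mu(B(y,t)) \leq t^{d-1}$ for all $y \in \IR^d$ and $t > 0$. The point is that the $(d-1)$-Hausdorff content bound upgrades to a genuine measure whose behaviour is controlled at every scale.

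Next, I would estimate the Riesz $2s$-energy $I_{2s}[\mu] := \iint \abs{y-z}^{2s-d}\,\d\mu(y)\,\d\mu(z)$ of the Frostman measure $\mu$. Writing $\int \abs{y-z}^{2s-d}\,\d\mu(z)$ via the layer-cake formula as $(d-2s)\int_0^\infty u^{2s-d-1} \mu(B(y,u))\,\d u$, splitting the $u$-integration at $u=r$, and combining the density estimate $\mu(B(y,u)) \lesssim u^{d-1}$ with the saturation bound $\mu(B(y,u)) \leq \mu(E) \lesssim r^{d-1}$, one obtains a uniform-in-$y$ bound of order $r^{2s-1}$ in the relevant small-radius regime. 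Integrating once more against $\mu$ then yields $I_{2s}[\mu] \lesssim r^{d-1} \cdot r^{2s-1} = r^{d+2s-2}$.

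Finally, I would invoke the dual characterization of the Riesz capacity,
\[
R_{s,2}(E) \simeq \sup \bigl\{ \mu(E)^2 \big/ I_{2s}[\mu] : \mu \geq 0,\, \supp \mu \subseteq E \bigr\},
\]
which follows from the identity $\|I_s \ast \mu\|_{\L^2(\IR^d)}^2 \simeq I_{2s}[\mu]$ together with a Hahn--Banach argument applied to the infimum definition of $R_{s,2}$. Plugging in the Frostman measure constructed above yields $R_{s,2}(E) \gtrsim r^{2(d-1)}/r^{d+2s-2} = r^{d-2s}$, which is precisely the $(s,2)$-uniform fatness inequality.

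The main technical obstacle will be keeping the scale bookkeeping clean uniformly across all radii $r > 0$ of interest and all admissible exponents $1 < 2s < d$; the computation above handles the small-radius regime cleanly, while larger values of $r$ are treated by localizing to a sub-ball of controlled size and using monotonicity $R_{s,2}(E') \leq R_{s,2}(E)$ for $E' \subseteq E$. A secondary subtlety is the verification of the dual characterization of $R_{s,2}$, which is classical potential theory but rests on the non-trivial identity $\|I_s \ast \mu\|_{\L^2}^2 \simeq I_{2s}[\mu]$ together with duality machinery as developed in \cite{Adams-Hedberg}; in a self-contained treatment one would either reproduce or directly cite this.
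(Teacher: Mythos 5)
The paper does not prove this proposition; it cites it directly from \cite[Prop.\ 3.11]{Boundary-Visibility}, so there is no in-paper argument to compare against. Your proof is correct and is in fact the standard route to this implication: Frostman's lemma converts the Hausdorff-content lower bound into a measure $\mu$ on $E := \Xi^c \cap B(x_0,r)$ with $\mu(E) \simeq r^{d-1}$ and the density bound $\mu(B(y,t)) \leq t^{d-1}$; the layer-cake estimate split at $u=r$ gives $\sup_y \int \abs{y-z}^{2s-d}\,\d\mu(z) \lesssim r^{2s-1}$ (using $2s>1$ for the inner piece and $2s<d$ for the tail), hence $I_{2s}[\mu] \lesssim r^{d+2s-2}$; and the dual characterization of Riesz capacity, which rests on $\|I_s * \mu\|_{\L^2}^2 \simeq I_{2s}[\mu]$ via Plancherel and the convolution identity for Riesz kernels (again using $0 < 2s < d$), then yields $R_{s,2}(E) \gtrsim \mu(E)^2 / I_{2s}[\mu] \gtrsim r^{d-2s}$.

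One small remark: the caveat in your last paragraph about large radii is unnecessary. The thickness hypothesis is assumed for all $r>0$, the Frostman construction is applied at scale $r$, and the resulting bound $I_{2s}[\mu] \lesssim r^{d+2s-2}$ is already uniform in $r$ because the saturation $\mu(E) \leq \mu(B(x_0,r)) \leq r^{d-1}$ comes from the Frostman density bound itself; no separate localization to a sub-ball is needed. The only genuinely nontrivial ingredients are Frostman's lemma and the dual (min-energy) characterization of $R_{s,2}$, both of which are available in \cite{Adams-Hedberg} and are correctly invoked.
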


\begin{proposition}[{\cite[pp.\ 2197-2198]{Lehrback-PointwiseHardy}}]
\label{Prop: Boundary density implies thickness}
If a domain $\Xi \subseteq \IR^d$ satisfies the \emph{inner boundary density condition}
\begin{align}
\label{Boundary density condition}
 \m_{d-1}^\infty\big(\bd \Xi \cap B(x, 2 \dist_{\bd \Xi}(x))\big) \gtrsim \dist_{\bd \Xi}(x)^{d-1} \qquad (x \in \Xi)
\end{align}
then its complement satisfies the thickness condition \eqref{Thickness condition}.
\end{proposition}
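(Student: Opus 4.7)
The key observation I would open with is that since $\Xi$ is open, the complement $\Xi^c$ is closed and contains the entire topological boundary $\bd \Xi$. Consequently, any $(d-1)$-Hausdorff content sitting on $\bd \Xi$ inside $B(x,r)$ automatically lower-bounds $\m_{d-1}^\infty(\Xi^c \cap B(x,r))$. This is precisely the bridge that will transport the inner boundary density---an interior condition about $\bd \Xi$---into a thickness statement about $\Xi^c$.

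Fix $x \in \Xi^c$ and $r > 0$, and introduce a small constant $c \in (0, \tfrac{1}{4})$ to be tuned at the end. I would split into two regimes. In the \emph{exterior-dominated regime} where $\Xi \cap B(x, cr) = \emptyset$, the whole ball $B(x, cr)$ lies in $\Xi^c$, and I would invoke the elementary lower bound $\m_{d-1}^\infty(B(x, \rho)) \gtrsim \rho^{d-1}$. This bound is justified by projecting any admissible countable covering $\{B(x_j, r_j)\}$ of $B(x, \rho)$ orthogonally onto a coordinate hyperplane: each projected set is a $(d-1)$-ball of radius $r_j$, and the union must cover a projected set of $(d-1)$-Lebesgue measure $\gtrsim \rho^{d-1}$, forcing $\sum_j r_j^{d-1} \gtrsim \rho^{d-1}$. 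This immediately gives the required thickness with constant depending on $c$.

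In the complementary \emph{interior-dominated regime} I would select a point $y \in \Xi \cap B(x, cr)$ and set $\delta := \dist_{\bd \Xi}(y) \leq |y - x| \leq cr$. If $\delta \simeq r$, then the inner boundary density applied at $y$ yields $\m_{d-1}^\infty(\bd \Xi \cap B(y, 2\delta)) \gtrsim \delta^{d-1} \simeq r^{d-1}$, and a suitable choice of $c$ ensures $B(y, 2\delta) \subseteq B(x, r)$, closing the argument via the opening remark. The serious issue arises when $\delta \ll r$: a single application of inner boundary density produces content only of order $\delta^{d-1}$, which is too small. The remedy is a Whitney-type chain: using the fact that $\Xi$ is open and path-connected, I would walk from $y$ along a path in $\Xi$ in the direction of increasing $\dist_{\bd \Xi}$, constructing successive points $y_1, y_2, \ldots$ with $\dist_{\bd \Xi}(y_{k+1}) \simeq 2 \dist_{\bd \Xi}(y_k)$, until reaching a point $y_* \in \Xi \cap B(x, r/2)$ with $\dist_{\bd \Xi}(y_*) \simeq r$; the inner boundary density then yields content $\gtrsim r^{d-1}$ inside $B(y_*, 2\dist_{\bd \Xi}(y_*)) \subseteq B(x, r)$.

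The main obstacle is verifying that such a chain terminates at scale $r$ while remaining inside $B(x, r/2)$. If the walk is obstructed---because $\Xi \cap B(x, r/2)$ is everywhere very close to $\bd \Xi$---then a careful Whitney decomposition of $\Xi^c$ inside $B(x, r/2)$ shows that the complement itself already fills a $\gtrsim r^{d-1}$ portion in the projected sense, reducing this subcase to a refined version of the exterior-dominated argument. Keeping track of the geometric growth of radii along the chain, and handling the boundary case of obstruction, is the technical heart of Lehrbäck's construction, but the underlying dichotomy---either the chain succeeds and we apply inner boundary density at scale $r$, or $\Xi^c$ visibly occupies a large portion of $B(x,r)$---is conceptually clean.
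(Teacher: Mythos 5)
Your reduction to the easy cases is fine: the observation $\bd \Xi \subseteq \Xi^c$, the ball-in-the-complement case via projections, and the case where some $y \in \Xi \cap B(x,cr)$ has $\dist_{\bd\Xi}(y) \simeq r$ (where a single application of \eqref{Boundary density condition} finishes) are all correct, and this is how any proof starts. (The paper itself offers no proof at all -- it only cites Lehrb\"ack -- so the comparison is with a complete argument.) The genuine gap is in your treatment of the remaining case, and it is not a technicality: it is the whole content of the proposition. First, the chain ``walking in the direction of increasing $\dist_{\bd\Xi}$ until reaching a point $y_*$ with $\dist_{\bd\Xi}(y_*) \simeq r$'' cannot be carried out in general, because such points need not exist anywhere, let alone inside $B(x,r/2)$: a slab $\{0 < z_d < \eps\}$, or a bounded pancake-type domain of thickness $\eps \ll r$, satisfies the inner boundary density condition (the nearby boundary hyperplane already gives content $\simeq \dist_{\bd\Xi}(z)^{d-1}$ in $B(z, 2\dist_{\bd\Xi}(z))$), yet every point of $\Xi$ has depth at most $\eps/2 \ll r$. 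So the ``obstructed'' branch is not a degenerate subcase; it is the typical situation.

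Second, your fallback for that branch is false as stated. From ``$\Xi \cap B(x,r/2)$ is everywhere within $\eps r$ of $\bd\Xi$'' you only get that $\Xi^c$ is $(\eps r)$-dense in $B(x,r/2)$, and density at a scale proportional to $r$ gives \emph{no} lower bound on $(d-1)$-content or on the measure of projections: a finite $(\eps r)$-net of $B(x,r/2)$ is dense in this sense and has content zero, so no ``Whitney decomposition of $\Xi^c$'' can show by itself that the complement ``fills a $\gtrsim r^{d-1}$ portion in the projected sense.'' To rule such configurations out you must re-invoke \eqref{Boundary density condition} at the many shallow points $z$ and aggregate their individually small contributions $\m_{d-1}^\infty(\bd\Xi \cap B(z, 2\dist_{\bd\Xi}(z))) \gtrsim \dist_{\bd\Xi}(z)^{d-1}$ into a single bound $\gtrsim r^{d-1}$ for the union inside $B(x,r)$. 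This aggregation is delicate precisely because Hausdorff content is not additive on disjoint, well-separated pieces (e.g.\ $N \simeq (\rho/\eps)^d$ separated spheres of radius $\eps$ in a ball of radius $\rho$ have $\sum$ of contents $\simeq \rho^d/\eps \gg \rho^{d-1}$, while their union has content $\leq \rho^{d-1}$); one has to control, for an arbitrary covering of $\Xi^c \cap B(x,r)$ (equivalently, via a Frostman-type measure), how much boundary content a covering ball of each radius can absorb. That multi-scale bookkeeping is exactly what Lehrb\"ack's pages 2197--2198 supply, and it is missing from your proposal; as written, the argument proves the statement only when $\Xi$ contains a point of depth comparable to $r$ near $x$ or when $B(x,cr)$ misses $\Xi$ entirely.
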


\begin{lemma}
\label{Lem: d-1 set implies boundary density condition}
Each \emph{bounded} domain $\Xi \subseteq \IR^d$ whose boundary is a $(d-1)$-set satisfies the inner boundary density condition \eqref{Boundary density condition} -- and thus has $(s,2)$-uniformly fat complement for $1<2s<d$.
\end{lemma}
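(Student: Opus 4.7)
The plan is to reduce the inner boundary density condition at $x$ at scale $r := \dist_{\bd \Xi}(x)$ to the Ahlfors--David lower bound applied at a ball centered on $\bd \Xi$, and then to upgrade the resulting Hausdorff measure estimate to a Hausdorff content estimate using the upper bound built into the $(d-1)$-set condition. Throughout I would work at scales up to $r_0 := \diam(\Xi)$, which is legitimate by Remark~\ref{Rem: Remarks on d-1 set}(1) with constants depending only on $r_0$ and on the $(d-1)$-set constants of $\bd \Xi$.

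First I would fix $x \in \Xi$, set $r := \dist_{\bd \Xi}(x) \leq r_0$, and use compactness of $\bd \Xi$ (closed and bounded) to pick a nearest point $y \in \bd \Xi$ with $\abs{x-y} = r$. The triangle inequality then gives the inclusion $B(y, r) \subseteq B(x, 2 r)$, and hence it is enough to prove the localized content bound $\m_{d-1}^{\infty}(\bd \Xi \cap B(y, r)) \gtrsim r^{d-1}$.

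The core step is the passage from Hausdorff content to Hausdorff measure. Given any admissible cover $\bd \Xi \cap B(y, r) \subseteq \bigcup_j B(x_j, r_j)$, I would discard the balls disjoint from $\bd \Xi$ and, for each surviving one, replace its center by some $x_j' \in \bd \Xi \cap B(x_j, r_j)$, at the cost of doubling the radius: $B(x_j, r_j) \subseteq B(x_j', 2 r_j)$. Combining the \emph{lower} $(d-1)$-set bound at $B(y, r)$ with the \emph{upper} $(d-1)$-set bound applied at each $B(x_j', 2 r_j)$ then yields
\begin{equation*}
  r^{d-1} \lesssim \m_{d-1}(\bd \Xi \cap B(y, r)) \leq \sum_j \m_{d-1}(\bd \Xi \cap B(x_j', 2 r_j)) \lesssim \sum_j r_j^{d-1},
\end{equation*}
and taking the infimum over covers produces the required content estimate. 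Balls with $2 r_j > r_0$ are not covered directly by the upper $(d-1)$-set bound, but any single such ball already contributes $r_j^{d-1} \gtrsim r_0^{d-1} \geq r^{d-1}$, handling that degenerate case.

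The main obstacle, and the only place where the geometry of $(d-1)$-sets is genuinely used, is this last step: there is no universal inequality forcing $\m_{d-1}^{\infty} \gtrsim \m_{d-1}$, and it is precisely the upper Ahlfors--David regularity that renders every covering inefficient enough to transfer the measure lower bound into a content lower bound. Once the lemma is established, the concluding assertion about $(s,2)$-uniform fatness of $\Xi^c$ follows by chaining Propositions~\ref{Prop: Boundary density implies thickness} and \ref{Prop: Thickness implies fatness}.
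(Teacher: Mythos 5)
Your proposal is correct and follows essentially the same route as the paper: use the lower Ahlfors--David bound to get $\m_{d-1}(\bd\Xi\cap B(y,r))\gtrsim r^{d-1}$, and the upper bound to dominate $\sum_j r_j^{d-1}$ from below by the Hausdorff measure of the covered set, thereby transferring a measure estimate to a content estimate. The only cosmetic differences are that the paper works directly with covers centered in $\bd\Xi$ (as its definition of $\m_{d-1}^\infty$ already requires) and so avoids the recentering-plus-doubling step, and that it disposes of large radii via the finite total measure $\m_{d-1}(\bd\Xi)$ rather than your observation that one large ball already dominates $r^{d-1}$.
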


\begin{proof}
Fix $x \in \Xi$, put $E:= \bd \Xi \cap B(x, 2 \dist_{\bd \Xi}(x))$, and let $\{B(x_j,r_j)\}_{j \in \IN}$ be a covering of $E$ by open balls centered in $E$. If $r_j \leq 1$ then $r_j^{d-1}$ is comparable to $\m_{d-1}(\bd \Xi \cap B(x_j,r_j))$ and if $r_j >1$ then certainly $\m_{d-1}(\bd \Xi \cap B(x_j,r_j)) \leq \m_{d-1}(\bd \Xi) r_j^{d-1}$. Note that $0 < \m_{d-1}(\bd \Xi) < \infty$ holds since by boundedness of $\Xi$ one can cover $\bd \Xi$ by finitely many balls of radius $1$ centered in $\bd \Xi$. Thus,
\begin{align*}
 \sum_{j= 1}^\infty r_j^{d-1} \gtrsim \sum_{j= 1}^\infty \m_{d-1}(\bd \Xi \cap B(x_j, r_j)) \geq \m_{d-1}\Big(\bd \Xi \cap \bigcup_{j=1}^\infty B(x_j,r_j) \Big) \geq \m_{d-1}(E).
\end{align*}
On the other hand, if $y \in \bd \Xi$ realizes $\dist_{\bd \Xi}(x)$ then $B(y, \dist_{\bd \Xi}(x)) \subseteq B(x, 2 \dist_{\bd \Xi}(x))$ so that item (1) of Remark~\ref{Rem: Remarks on d-1 set} applied with $r_0 = \diam(\Xi)$ yields
\begin{align*}
 \m_{d-1}(E) \geq \m_{d-1}\big(\bd \Xi \cap B(y, \dist_{\bd \Xi}(x))\big) \gtrsim \dist_{\bd \Xi}(x)^{d-1}.
\end{align*}
Now, the conclusion follows by passing to the infimum over all such coverings of $E$.
\end{proof}

As a preparatory step towards the proof of Theorem~\ref{Thm: Fractional Hardy for HDs} we show a fractional Hardy inequality for test functions with compact support in a domain $\Xi \subseteq \IR^d$ under considerably weaker geometric assumptions than in Proposition~\ref{Prop: Hardy for vanishing boundary trace II}, cf.\ Lemma~\ref{Lem: d-1 set implies boundary density condition}. The price we have to pay is a double integral over $\IR^d$ instead of $\Xi$ on the right-hand side. The proof is by recombining ideas from \cite{Edmunds-Hardy} and~\cite{Hardy-Triebel-LizorkinI}.

\begin{proposition}
\label{Prop: Hardy for testfunctions on Omega-bullet}
Let $0 < 2s < d$ and let $\Xi \subseteq \IR^d$ be a bounded domain with $(s,2)$-uniformly fat complement. Then 
\begin{align*}
 \int_\Xi \frac{\abs{f(x)}^2}{\dist_{\bd \Xi}(x)^{2s}} \; \d x \lesssim \int_{\IR^d} \int_{\IR^d} \frac{\abs{f(x)-f(y)}^2}{\abs{x-y}^{2s+d}} \; \d x \; \d y
\end{align*}
holds for every $f \in \C^\infty(\IR^d)$ with compact support in $\Xi$.
\end{proposition}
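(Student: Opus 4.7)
The plan is to combine a Whitney decomposition of $\Xi$ with a localized, capacity-type fractional Poincar\'e inequality. Since $f \in \C^\infty(\IR^d)$ has compact support in $\Xi$, it vanishes on $\Xi^c$, and the $(s,2)$-uniform fatness hypothesis provides, at every scale comparable to $\dist_{\bd \Xi}$, a large zero set for $f$ in the sense of Riesz capacity. This converts into the Poincar\'e-type control of $\|f\|_{\L^2}$ on each Whitney piece by a localized piece of the double integral on the right-hand side of the target, with exactly the $\dist_{\bd \Xi}(x)^{2s}$ scaling needed to absorb the Hardy weight on the left.

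Concretely, I would fix a Whitney decomposition $\Xi = \bigcup_j Q_j$ into dyadic cubes with $\ell(Q_j) \simeq \dist(Q_j, \bd \Xi) =: r_j$, so that $\dist_{\bd \Xi}(x) \simeq r_j$ on $Q_j$. For each $j$, pick $y_j \in \bd \Xi$ realizing $\dist(Q_j, \bd \Xi)$, set $B_j := B(y_j, C r_j)$ with $C$ large enough that $Q_j \subseteq B_j$, and write $B_j^\ast := 2 B_j$. The fatness hypothesis yields $R_{s,2}(\Xi^c \cap B_j) \gtrsim r_j^{d-2s}$, while $f \equiv 0$ on $\Xi^c \cap B_j$. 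I would then invoke a capacitary fractional Poincar\'e inequality of the shape
\begin{align*}
 \int_{B_j} |f|^2 \, \d x \lesssim \frac{|B_j|}{R_{s,2}(\Xi^c \cap B_j)} \iint_{B_j^\ast \times B_j^\ast} \frac{|f(x) - f(y)|^2}{|x-y|^{d+2s}} \, \d x \, \d y,
\end{align*}
this being the ingredient to be extracted from \cite{Edmunds-Hardy} and \cite{Hardy-Triebel-LizorkinI}. Substituting $|B_j| \simeq r_j^d$ and the capacity lower bound produces a prefactor of order $r_j^{2s}$, which cancels $\dist_{\bd \Xi}(x)^{-2s} \simeq r_j^{-2s}$ on $Q_j$ and gives
\begin{align*}
 \int_{Q_j} \frac{|f(x)|^2}{\dist_{\bd \Xi}(x)^{2s}} \, \d x \lesssim \iint_{B_j^\ast \times B_j^\ast} \frac{|f(x) - f(y)|^2}{|x-y|^{d+2s}} \, \d x \, \d y.
\end{align*}

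Summing in $j$ then concludes the proof: by the Whitney construction the enlarged balls $\{B_j^\ast\}_j$ have uniformly bounded overlap in $\IR^d$, and therefore so do the products $\{B_j^\ast \times B_j^\ast\}_j$ in $\IR^d \times \IR^d$, so the sum of the local right-hand sides is controlled by the global double integral on $\IR^d \times \IR^d$. The main obstacle is the capacitary Poincar\'e inequality itself: its fractional-order version is considerably more subtle than the classical Maz'ya--Poincar\'e estimate because of the non-local seminorm and because capacity must be compared against $\H^s$ norms of truncated admissible competitors. The arguments in the cited references proceed via Riesz potential representations of $\H^s$-functions, truncation, and the variational definition of $R_{s,2}$, and once that lemma is in hand the remaining bookkeeping (Whitney decomposition, scaling, bounded overlap) is routine.
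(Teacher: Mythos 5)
Your approach is correct and follows the same basic strategy as the paper: Whitney decomposition of $\Xi$, a local capacitary Poincar\'e-type estimate on each piece powered by the uniform fatness of $\Xi^c$, and a summation step. The two main differences are cosmetic but worth flagging. First, the paper works with the dilated Whitney cube $Q^* = 40\sqrt{d}\,Q$ and a ball $B_{Q^*}$ chosen inside it (rather than your doubled ball $B_j^*$ centered at a boundary point), and it sums by bounding $\sum_{Q}\ind_{Q\times Q}(\M F)^{2/r}$ via the $\L^{2/r}$-boundedness of the Hardy--Littlewood maximal operator on $\IR^d\times\IR^d$, instead of invoking bounded overlap of $\{B_j^*\times B_j^*\}$. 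Second, and more substantively, the local estimate that comes out of \cite{Edmunds-Hardy} is \emph{not} the exponent-$2$ capacitary Poincar\'e you write down: it carries a lowered exponent $r\in(1,2)$ inside the double integral, namely a bound on $\abs{Q}\abs{f_{B_{Q^*}}}^2 + \int_Q\abs{f-f_{B_{Q^*}}}^2$ by $\abs{Q^*}^{2+2s/d-4/r}\big(\int_{Q^*}\int_{Q^*}\abs{f(x)-f(y)}^r \abs{x-y}^{-(dr/2+rs)}\,\d x\,\d y\big)^{2/r}$. This drop in exponent reflects the self-improving property of uniformly fat sets and is where the fatness is actually used. Your $\L^2$ form of the local inequality does follow from it by one application of Jensen's inequality to the inner average, and once you have it, bounded overlap of $\{B_j^*\}$ (hence of $\{B_j^*\times B_j^*\}$) closes the argument exactly as you say. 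So your route works, but be aware that the ``extracted'' lemma lives at exponent $r<2$ in the cited references, and the $\L^2$ version you quote requires the Jensen step on top of it; the maximal-function trick in the paper is simply an equivalent way to absorb the power $2/r>1$ when summing.
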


\begin{proof}
Let $\mathcal{W}$ be a Whitney decomposition of $\Xi$, i.e.\ $\mathcal{W}$ is a countable family of closed dyadic cubes in $\IR^d$ with pairwise disjoint interiors such that $\Xi = \bigcup_{Q \in \mathcal{W}} Q$ and such that
\begin{align}
\label{Eq: Whitney cubes distance to boundary}
\diam(Q) \leq \mathrm{dist}(Q, \bd \Xi) \leq 4 \diam(Q) \qquad (Q \in \mathcal{W}).
\end{align}
We refer to \cite[Sec.\ VI.1]{Stein} for this classical construction. Denote the center of $Q \in \mathcal{W}$ by $x_Q$ and its side length by $l(Q)$. Let $Q^*:= 40 \sqrt{d} Q$ be the dilated cube having center $x_Q$ and side length $40 \sqrt{d} \cdot l(Q)$, and set $B_{Q^*}:= B(x_Q, c_{d}^{-1} l(Q^*))$ with $c_d > 0$ a constant depending only on $d$; its value to be specified later on.

Now, take $f \in \C^\infty(\IR^d)$ with compact support in $\Xi$. Splitting $\Xi$ into Whitney cubes and employing \eqref{Eq: Whitney cubes distance to boundary} leads to
\begin{align*}
 \int_\Xi \frac{\abs{f(x)}^2}{\dist_{\bd \Xi}(x)^{2s}} \; \d x \leq 2 \sum_{Q \in \mathcal{W}} \diam(Q)^{-2s} \bigg(\abs{Q} \abs{ f_{B_{Q^*}}}^2 + \int_Q \abs{f - f_{B_{Q^*}}}^2 \bigg),
\end{align*}
where $f_{B_{Q^*}}$ denotes the average of $f$ over $B_{Q^*}$. The following average estimates on Whitney cubes of a bounded domain with uniformly fat complement are implicit in the proof of \cite[Thm.~1.3]{Edmunds-Hardy}, see the part following \cite[Eq.\ (4.4)]{Edmunds-Hardy}.
\begin{center}
{\begin{minipage}{0.9 \textwidth}
 \textit{Let $0 < 2s < d$, let $\Xi \subseteq \IR^d$ be a bounded domain with $(s,2)$-uniformly fat complement, and let $\mathcal{W}$ be a Whitney decomposition of \- $\Xi$. There exist constants $c_d > 0$ and $r \in (1,2)$ such that
\begin{align*}
\abs{Q} \abs{f_{B_{Q^*}}}^2 + \int_Q \abs{f - f_{B_{Q^*}}}^2 \lesssim \abs{Q^*}^{2+2s/d - 4/r} \bigg( \int_{Q^*} \int_{Q^*} \frac{\abs{f(x)-f(y)}^r}{\abs{x-y}^{dr/2+rs}} \; \d x \; \d y \bigg)^{2/r}
\end{align*}
 holds for each $f \in \C^\infty(\Xi)$ with compact support in $\Xi$ and every cube $Q \in \mathcal{W}$, where $Q^* = 40 \sqrt{d} Q$ and $B_{Q^*}$ depends on $c_d$  as before.}
\end{minipage}}
\end{center}
Henceforth fix $c_d$ and $r$ suchlike. Next, introduce the auxiliary function $F(x,y):= \frac{\abs{f(x)-f(y)}^r}{\abs{x-y}^{dr/2+rs}}$ and note that $f \in \H^s(\IR^d)$ entails $F \in \L^{2/r}(\IR^d \times \IR^d)$. The combination of the previous two estimates then is
\begin{align*}
      \int_\Xi \frac{\abs{f(x)}^2}{\dist_{\bd \Xi}(x)^{2s}} \; \d x 
      &\lesssim \sum_{Q \in \mathcal{W}} \diam(Q)^{-2s} \abs{Q^*}^{2+2s/d - 4/r} \bigg(\int_{Q^*} \int_{Q^*} F(x,y) \; \d x \; \d y \bigg)^{2/r}
\intertext{and since $Q$ and $Q^*$ are comparable in measure,}
      &\lesssim \sum_{Q \in \mathcal{W}} \abs{Q}^{2} \abs{Q^*}^{- 4/r}\bigg(\int_{Q^*} \int_{Q^*} F(x,y) \; \d x \; \d y \bigg)^{2/r}
      = \sum_{Q \in \mathcal{W}} \abs{Q}^{2} \bigg(\fint_{Q^* \times Q^*} F \bigg)^{2/r}.
\intertext{Now, recall the Hardy-Littlewood Maximal Operator which for $h \in \L_{\loc}^1(\IR^d \times \IR^d)$ is defined by
\begin{align*}
 (\M h)(x,y) := \sup_{Q \in \mathcal{Q}(x,y)} \fint_Q \abs{h} \qquad ((x,y) \in \IR^d \times \IR^d),
\end{align*}
where $\mathcal{Q}(x,y)$ is the collection of closed cubes in $\IR^d \times \IR^d$ that contain a given $(x,y) \in \IR^d \times \IR^d$. By means of $\M$ the ongoing estimate can be continued as follows:}
      \int_\Xi \frac{\abs{f(x)}^2}{\dist_{\bd \Xi}(x)^{2s}} \; \d x &\leq \sum_{Q \in \mathcal{W}} \int_{Q \times Q} \bigg(\fint_{Q^* \times Q^*} F \bigg)^{2/r} \; \d x \; \d y \\
      &\leq \sum_{Q \in \mathcal{W}} \int_{\IR^d \times \IR^d} \ind_{Q \times Q}(x,y) \left(\M F(x,y)\right)^{2/r} \; \d x \; \d y.
\intertext{As the Whitney cubes have pairwise disjoint interiors, $\sum_{Q \in \mathcal{W}} \ind_{Q \times Q} \leq 1$ holds a.e.\ on $\IR^d \times \IR^d$. Monotone convergence and the boundedness of $\M$ on $\L^{2/r}(\IR^d \times \IR^d)$, cf.\ \cite[Thm.\ I.1.1]{Stein}, yield}
      & \leq \int_{\IR^d \times \IR^d} \left(\M F(x,y)\right)^{2/r} \; \d x \; \d y
      \lesssim \int_{\IR^d \times \IR^d} F(x,y)^{2/r} \; \d x \; \d y. 
\end{align*}
By definition of $F$ this completes the proof.
\end{proof}

Surprisingly, Theorem~\ref{Thm: Fractional Hardy for HDs} already follows from Proposition~\ref{Prop: Hardy for testfunctions on Omega-bullet} applied to a very cleverly chosen auxiliary domain $\Omega_\bullet$. This idea is taken from \cite[Sec.\ 6]{ABHR}. 

More precisely, take $\E_\star$ and $\Omega_\star$ as in Theorem~\ref{Thm: Extension operator for HDs}. Recall that $\Omega_\star$ is a bounded domain that contains $\Omega$ and avoids $D$. Let $B \subseteq \IR^d$ be an open ball that contains $\Omega_\star$ and define 
\begin{align*}
\Omega_\bullet := \bigcup \{U: \text{ $U$ is an open and connected subset of $B$ that contains $\Omega$ and avoids $D$} \}.
\end{align*}
Then $\Omega_\bullet$ is a union of domains with a common point and therefore a domain itself. Moreover, $\Omega_\bullet$ is bounded and contains $\Omega_\star$ by construction. Its crucial topological property is the following.

\begin{lemma}[{\cite[Lem.\ 6.4]{ABHR}}]
\label{Lem: Boundary of Omega-bullet}
 It either holds $\bd \Omega_\bullet = D$ or $\bd \Omega_\bullet = \bd B \cup D$.
\end{lemma}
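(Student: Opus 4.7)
The plan is to describe $\Omega_\bullet$ explicitly as the connected component of $B \setminus D$ containing $\Omega$, and then to compare its topological boundary with $D \cup \bd B$. First I would observe that since $D$ is closed and disjoint from the connected set $\Omega$, the open set $B \setminus D$ has a unique connected component $C$ containing $\Omega$, which is itself open, lies in $B$, and avoids $D$. Maximality of $\Omega_\bullet$ forces $C \subseteq \Omega_\bullet$, whereas every competitor $U$ in the definition of $\Omega_\bullet$ is an open connected subset of $B \setminus D$ meeting $C$ along $\Omega$, so $U \subseteq C$ and hence $\Omega_\bullet \subseteq C$. This gives $\Omega_\bullet = C$. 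Since enlarging $B$ does not alter $\Omega_\bullet$, I may without loss assume that $B$ is chosen so large that $\d(D, \bd B) > 0$; this is permissible because $D \subseteq \bd\Omega$ is bounded.

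From this description $\Omega_\bullet$ is a connected component of the open set $B \setminus D$, so $\bd\Omega_\bullet \subseteq \bd(B \setminus D) \subseteq \bd B \cup D$ (using that $D$ is closed). Conversely, each $x \in D \subseteq \bd\Omega$ is a cluster point of $\Omega \subseteq \Omega_\bullet$ while lying outside $\Omega_\bullet$, so $D \subseteq \bd\Omega_\bullet$. Hence the dichotomy in the lemma hinges on whether
\begin{align*}
 S := \bd\Omega_\bullet \cap \bd B
\end{align*}
is empty: if $S = \emptyset$ we immediately get $\bd\Omega_\bullet = D$, while if $S \neq \emptyset$ I will invoke the connectedness of the sphere $\bd B$ (which uses $d \geq 2$) together with the fact that $S$ is automatically closed in $\bd B$ to conclude $S = \bd B$, provided I can show $S$ is also \emph{relatively open} in $\bd B$. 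This will give $\bd\Omega_\bullet = D \cup \bd B$.

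The main obstacle is this relative-openness statement, which is where the geometric argument lives. Given $y \in S$ I would pick $r > 0$ small enough that $B(y, r) \cap D = \emptyset$; this is where the separation $\d(D, \bd B) > 0$ arranged above is crucial. Then $V := B(y, r) \cap B$, being the intersection of two open balls, is convex and therefore open and connected, and it meets $\Omega_\bullet$ because $y$ is a cluster point of $\Omega_\bullet$. Consequently $\Omega_\bullet \cup V$ is admissible in the definition of $\Omega_\bullet$, so by maximality $V \subseteq \Omega_\bullet$. For any $y' \in \bd B$ with $\abs{y' - y} < r/2$, every neighborhood of $y'$ meets $V \subseteq \Omega_\bullet$ while $y' \notin B \supseteq \Omega_\bullet$, whence $y' \in S$; this exhibits a spherical neighborhood of $y$ inside $S$. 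I expect the main technical nuisance to be precisely the need to separate $D$ from $\bd B$ so that the radius $r$ can be chosen to make the maximality argument go through without accidentally engulfing part of $D$; once this is arranged, maximality does all the work.
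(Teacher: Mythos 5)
The paper does not prove this lemma itself but cites it from \cite{ABHR}, so a direct comparison with the paper's own argument is not possible here. Your strategy is nonetheless sound in outline: identifying $\Omega_\bullet$ with the connected component of $B \setminus D$ containing $\Omega$, observing $D \subseteq \bd\Omega_\bullet \subseteq D \cup \bd B$, and finishing by showing that $S := \bd\Omega_\bullet \cap \bd B$ is clopen in the connected sphere $\bd B$ (using $d \geq 2$) are all correct steps, and the convexity/maximality argument establishing relative openness of $S$ works.

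The preparatory ``without loss of generality'' is broken, though. You claim that enlarging $B$ does not alter $\Omega_\bullet$ in order to arrange $\d(D,\bd B)>0$; but that claim is false, and it fails exactly in the case you are trying to reach. Whenever $\bd\Omega_\bullet$ meets $\bd B$, the component of $B\setminus D$ containing $\Omega$ reaches $\bd B$ and strictly grows when $B$ is enlarged: for instance, with $\Omega = B(0,2)\setminus\cl{B(0,1)}$ and $D = \bd B(0,1)$ one gets $\Omega_\bullet = B\setminus\cl{B(0,1)}$ for every admissible ball $B$, so $\Omega_\bullet$ visibly depends on $B$. Thus to know the WLOG is harmless you would already need to know which branch of the dichotomy holds. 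Fortunately no WLOG is needed: $D \subseteq \bd\Omega \subseteq \cl{\Omega_\star}$, and once $B$ is chosen so that the compact set $\cl{\Omega_\star}$ lies inside the open ball $B$ (as the construction surrounding the lemma intends, and as is possible since $\Omega_\star$ is bounded), the compact set $D$ lies in $B$ and is disjoint from $\bd B$, giving $\d(D,\bd B)>0$ directly. With this observation substituted for the WLOG, your argument goes through as written.
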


\begin{corollary}
\label{Cor: Omega-bullet has uniformly fat complement}
The complement of $\Omega_\bullet$ is $(s,2)$-uniformly fat for each $1 < 2s < d$.
\end{corollary}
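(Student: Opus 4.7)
The plan is to verify that $\Omega_\bullet$ fits into the chain of implications already established in this section: $(d-1)$-set boundary $\Rightarrow$ inner boundary density $\Rightarrow$ thickness condition on the complement $\Rightarrow$ $(s,2)$-uniform fatness.

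The only nontrivial point is that $\bd \Omega_\bullet$ is a $(d-1)$-set. By Lemma~\ref{Lem: Boundary of Omega-bullet} we have either $\bd \Omega_\bullet = D$ or $\bd \Omega_\bullet = \bd B \cup D$. In the first case there is nothing to do since $D$ is a $(d-1)$-set by Assumption~\ref{Ass: General geometric assumption on Omega}. In the second case, $\bd B$ is the boundary of a Euclidean ball and hence a $(d-1)$-set (e.g.\ from the bi-Lipschitz parametrization in Assumption~\ref{Ass: General geometric assumption on Omega} applied to $B$, or directly), so item~(4) of Remark~\ref{Rem: Remarks on d-1 set} shows that $\bd B \cup D$ is again a $(d-1)$-set.

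Since $\Omega_\bullet$ is bounded (it is contained in $B$) and its boundary is a $(d-1)$-set, Lemma~\ref{Lem: d-1 set implies boundary density condition} applies and yields that $\Omega_\bullet$ satisfies the inner boundary density condition \eqref{Boundary density condition}. Proposition~\ref{Prop: Boundary density implies thickness} then implies that $\Omega_\bullet^c$ satisfies the thickness condition \eqref{Thickness condition}, and Proposition~\ref{Prop: Thickness implies fatness} upgrades this to $(s,2)$-uniform fatness for every $1 < 2s < d$, which is the claim. No step is really an obstacle here; the work was done in setting up $\Omega_\bullet$ so that Lemma~\ref{Lem: Boundary of Omega-bullet} applies, and everything else is a direct invocation of the results already recorded in this section.
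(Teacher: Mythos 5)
Your proof is correct and matches the paper's argument: both establish that $\bd\Omega_\bullet$ is a $(d-1)$-set via Lemma~\ref{Lem: Boundary of Omega-bullet} and the stability of $(d-1)$-sets under finite unions, and then invoke Lemma~\ref{Lem: d-1 set implies boundary density condition} (whose statement already includes the fatness conclusion, so you could have stopped there instead of re-tracing the chain through Propositions~\ref{Prop: Boundary density implies thickness} and~\ref{Prop: Thickness implies fatness}). The only cosmetic difference is that you elaborate on why $\bd B$ is a $(d-1)$-set, which the paper dismisses as obvious.
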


\begin{proof}
By assumption $D$ is a $(d-1)$-set and obviously so is $\bd B$. As a finite union of $(d-1)$-sets $\bd \Omega_\bullet$ is a $(d-1)$-set itself, see Remark~\ref{Rem: Remarks on d-1 set}, and the claim follows from Lemma~\ref{Lem: d-1 set implies boundary density condition}.
\end{proof}

\begin{proof}[Proof of Theorem~\ref{Thm: Fractional Hardy for HDs}]
Let $s \in (\frac{1}{2},1)$ and fix $f \in \C_D^\infty(\Omega)$. Since in any case $D$ is a subset of $\bd \Omega_\bullet$ and as $\E_\star$ is an extension operator,
\begin{align}
\label{Hardy reduction from Omega to Omega-bullet}
 \int_\Omega \frac{\abs{f(x)}^2}{\dist_D(x)^{2s}} \; \d x 
\leq \int_\Omega \frac{\abs{f(x)}^2}{\dist_{\bd \Omega_\bullet}(x)^{2s}} \; \d x 
\leq \int_{\Omega_\bullet} \frac{\abs{\E_\star f(x)}^2}{\dist_{\bd \Omega_\bullet}(x)^{2s}} \; \d x.
\end{align}
Part (4) of Theorem~\ref{Thm: Extension operator for HDs} asserts that the support of the extended function $\E_\star f \in \H_D^s(\IR^d)$ is a subset of $\Omega_\star \subseteq \Omega_\bullet$. Let $\eta$ be a smooth function with support in $\Omega_\bullet$ that is identically one on $\supp(\E_\star f)$. By density choose a sequence $(u_n)_n \subseteq \C_c^\infty(\IR^d)$ that approximates $\E_\star f$ in $\H^s(\IR^d)$.  Lemma~\ref{Lem: Multiplication trace lemma} guarantees that $(\eta u_n)_n$ converges to $\eta \E_\star f = \E_\star f$ in $\H^s(\IR^d)$. After passing to a subsequence we can assume that $(\eta u_n)_n$ converges pointwise a.e.\ on $\IR^d$. Fatou's lemma and Proposition~\ref{Prop: Hardy for testfunctions on Omega-bullet} applied with $\Xi = \Omega_\bullet$ then yield
\begin{align*}
 \int_{\Omega_\bullet} \frac{\abs{\E_\star f(x)}^2}{\dist_{\bd \Omega_\bullet}(x)^{2s}} \; \d x 
&\leq \liminf_{n \to \infty} \int_{\Omega_\bullet} \frac{\abs{\eta(x) u_n(x)}^2}{\dist_{\bd \Omega_\bullet}(x)^{2s}} \; \d x 
\lesssim \liminf_{n \to \infty} \int_{\IR^d} \int_{\IR^d} \! \! \frac{\abs{\eta(x) u_n(x)- \eta(y) u_n(y)}^2}{\abs{x-y}^{2s+d}} \; \d x \; \d y.
\intertext{The rightmost term is bounded by a generic multiple of $\|\eta u_n\|_{\H^s(\IR^d)}^2$. Hence, Theorem~\ref{Thm: Extension operator for HDs} gives}
&\lesssim \liminf_{n \to \infty} \|\eta u_n\|_{\H^s(\IR^d)}
= \|\E_\star f\|_{\H^s(\IR^d)}
\lesssim \|f\|_{\H_D^s(\Omega)}.
\end{align*}
In combination with \eqref{Hardy reduction from Omega to Omega-bullet} this gives the claim of Theorem~\ref{Thm: Fractional Hardy for HDs} for $f \in \C_D^\infty(\Omega)$. 

To establish the claim for general $f \in \H_D^s(\Omega)$, use Proposition~\ref{Prop: Smooth functions dense in HFs} to approximate $f$ in $\H_D^s(\Omega)$ by a sequence $(f_n)_n \subseteq \C_D^\infty(\Omega)$ and conclude by means of Fatou's lemma as before.
\end{proof}

\section{Interpolation Theory}
\label{Sec: Interpolation for HDs}

\noindent This section is devoted to interpolation results related to the spaces $\H_D^s(\Omega)$. There already exists a fully developed interpolation theory for Sobolev spaces that incorporate mixed boundary conditions, cf.\ \cite{Mitrea-PoissonMixed} and \cite{Griepentrog-InterpolationOnGroger}, but -- to our knowledge -- no results obtained so far can cover the very general geometric assumptions on $\Omega$ and $D$ of the present paper.

To begin with, recall the following notions from interpolation theory \cite{Lunardi-Interpolation}, \cite{Triebel}, \cite{Bergh-Loefstroem}. If $X_0$ and $X_1$ are Banach spaces both embedded into the same linear Hausdorff space $\mathcal{X}$ then the spaces $X_0 \cap X_1$ and $X_0 + X_1$ are defined and are complete under the natural norms
\begin{align*}
 \|x\|_{X_0 \cap X_1} &:= \max \big \{\|x\|_{X_0}, \|x\|_{X_1} \big \} \qquad (x \in X_0 \cap X_1), \\
 \|x\|_{X_0 + X_1} &:= \inf \big\{\|x_0\|_{X_0} + \|x_1\|_{X_1}: x_j \in X_j,\ x=x_0 + x_1 \big\} \qquad (x \in X_0 + X_1).
\end{align*}
The pair $(X_0,X_1)$ is called \emph{interpolation couple}. For $\theta \in (0,1)$ the \emph{$\theta$-complex} and the \emph{$(\theta,2)$-real} interpolation space between $X_0$ and $X_1$ are denoted by $[X_0,X_1]_\theta$ and $(X_0, X_1)_{\theta,2}$, respectively. It is convenient to also define these spaces for $\theta \in \{0,1\}$ by setting them equal to $X_\theta$.

The main result we want to show in this section is the following.

\begin{theorem}
\label{Thm: Main interpolation theorem}
Let $\theta \in (0,1)$ and $s_0, s_1 \in (\frac{1}{2},\frac{3}{2})$. In addition, put $s_\theta := (1-\theta)s_0 + \theta s_1$. Then the following hold.
\vspace{5pt}
\begin{enumerate}
\itemsep10pt
 \item $\qquad \big(\H_D^{s_0}(\Omega),\H_D^{s_1}(\Omega)\big)_{\theta,2} = \H_D^{s_\theta}(\Omega) = \big[\H_D^{s_0}(\Omega),\H_D^{s_1}(\Omega)\big]_\theta$.
 \item $\qquad  \big[\L^2(\Omega), \H_D^1(\Omega)\big]_\theta = \big(\L^2(\Omega), \H_D^1(\Omega)\big)_{\theta,2} = \begin{cases}
                                                          \H_D^\theta(\Omega), &\text{if $\theta > \frac{1}{2}$,} \\
							  \H^\theta(\Omega), &\text{if $\theta < \frac{1}{2}$.}
                                                         \end{cases}$
\end{enumerate}
\end{theorem}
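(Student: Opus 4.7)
The strategy is to reduce everything to interpolation on $\IR^d$ via the bounded extension $\E$ from Theorem~\ref{Thm: Extension operator for HDs} paired with the trivial restriction operator. Both maps are bounded between $\L^2$ and between $\H_D^s(\Omega)$ and $\H_D^s(\IR^d)$ for every $s \in (\frac{1}{2},\frac{3}{2})$, so the retraction-coretraction theorem applies. Moreover, all spaces appearing in the theorem are Hilbert spaces, so the classical equivalence $[H_0, H_1]_\theta = (H_0, H_1)_{\theta,2}$ for Hilbert couples lets me focus on the complex interpolation identity in each case.

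For part (1), the key point is that $\H_D^s(\IR^d)$ is a complemented subspace of $\H^s(\IR^d)$ via the projection $\P_D = \Id - \E_D \R_D$ of Corollary~\ref{Cor: HFs is complemented}, and this projection is \emph{the same} for every $s \in (\frac{1}{2}, \frac{3}{2})$ since $\E_D$ is $s$-independent (Proposition~\ref{Prop: Jonsson-Wallin trace theorem}(2)). Feeding the classical identity $[\H^{s_0}(\IR^d), \H^{s_1}(\IR^d)]_\theta = \H^{s_\theta}(\IR^d)$ through the standard interpolation result for complemented subspaces with a uniform projection yields $[\H_D^{s_0}(\IR^d), \H_D^{s_1}(\IR^d)]_\theta = \H_D^{s_\theta}(\IR^d)$, which after transfer back via $\E$ and restriction is exactly part (1).

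For part (2), after the same reduction one needs to identify $[\L^2(\IR^d), \H_D^1(\IR^d)]_\theta$. When $\theta > \frac{1}{2}$, the upper inclusion follows from $[\L^2, \H_D^1]_\theta \hookrightarrow [\L^2, \H^1]_\theta = \H^\theta(\IR^d)$ combined with the density of $\H_D^1$ in the complex interpolation space and the continuity of $\R_D\colon \H^\theta(\IR^d) \to \H^{\theta - 1/2}(D)$, which forces the trace of any interpolated element to vanish. The reverse inclusion $\H_D^\theta(\IR^d) \subseteq [\L^2, \H_D^1]_\theta$ I would establish via the $K$-functional: given $f \in \H_D^\theta(\IR^d)$ and $t > 0$, set $h_t := \phi_t (\psi_t \ast f)$ where $\phi_t$ is a smooth cutoff vanishing within distance $t$ from $D$ and equal to $1$ beyond distance $2t$, while $\psi_t$ is a mollifier at scale $\sim t$; then $h_t \in \H_D^1(\IR^d)$, Theorem~\ref{Thm: Fractional Hardy for HDs} delivers the tail estimate $\|f - h_t\|_{\L^2} \lesssim t^\theta \|f\|_{\H_D^\theta}$, and standard mollifier bounds together with $|\nabla \phi_t| \lesssim t^{-1}$ and Hardy again yield $\|h_t\|_{\H_D^1} \lesssim t^{\theta - 1} \|f\|_{\H_D^\theta}$, producing the required $K$-estimate. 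The case $\theta < \frac{1}{2}$ is simpler: the upper inclusion is immediate from $\H_D^1 \subseteq \H^1$, and the reverse follows from density of $\C_D^\infty(\Omega)$ in $\H^\theta(\Omega)$ at such subcritical regularity (a standard consequence of the $(d-1)$-set structure of $D$, at which scale there is no trace obstruction) combined with an analogous $K$-method argument.

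The principal obstacle is the lower inclusion in part (2) for $\theta > \frac{1}{2}$: the mollification and cutoff scales must be tuned together so that the singular gradient $\nabla \phi_t$ does not destroy the estimate, and this is where the fractional Hardy inequality of Theorem~\ref{Thm: Fractional Hardy for HDs} --- valid precisely under the weak hypothesis that $D$ be a $(d-1)$-set --- is indispensable, since the absence of any Lipschitz structure around $D$ rules out classical reflection or localization arguments that would otherwise substitute for it.
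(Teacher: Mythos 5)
Your approach to part (1), the first equality in part (2), and the upper inclusion in part (2) for $\theta > \frac{1}{2}$ is either the same as the paper's or a sound simplification of it (the density-plus-trace-continuity argument for the upper inclusion is cleaner than the paper's reiteration trick and works). The interesting divergence, and the one that does not quite go through as written, is your direct $K$-functional argument for the lower inclusion $\H_D^\theta \subseteq (\L^2, \H_D^1)_{\theta,2}$ when $\theta > \frac{1}{2}$.

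The bound you derive, $K(t,f;\L^2,\H_D^1) \lesssim t^\theta \|f\|_{\H_D^\theta}$ uniformly in $t$, only places $f$ in the space $(\L^2,\H_D^1)_{\theta,\infty}$. Membership in $(\L^2,\H_D^1)_{\theta,2}$ requires the integral estimate
\begin{align*}
\int_0^\infty \big( t^{-\theta} K(t,f) \big)^2 \,\frac{\d t}{t} \lesssim \|f\|_{\H_D^\theta}^2,
\end{align*}
and a constant multiple of $t^\theta$ is not integrable against $t^{-2\theta-1}\,\d t$. To close this gap you would need to exhibit genuine decay in the error, for instance by splitting $\|f-h_t\|_{\L^2}$ into the classical mollification error (controlled after integration by the $\H^\theta$ seminorm via the Besov characterization) and a cutoff term $\|(1-\phi_t)(\psi_t*f)\|_{\L^2}$; for the latter, one can swap the $t$- and $x$-integrals and invoke the fractional Hardy inequality, but this requires a Hardy-type estimate for $\psi_t * f$ (which no longer vanishes on $D$) near the boundary, and the same delicacy reappears in bounding $\|(\nabla\phi_t)(\psi_t*f)\|_{\L^2}$ after integration in $t$. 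None of this is addressed, and as stated the conclusion ``producing the required $K$-estimate'' is too optimistic. The paper avoids these difficulties entirely by instead invoking Grisvard's trace characterization of real interpolation spaces via $\H^s(\IR;X)$, together with a zero extension over the infinite $D$-cylinder $\Dcyl \subseteq \IR^{d+1}$ (Proposition~\ref{Prop: Zero extension to Omega-Odot-D}, which is where the fractional Hardy inequality actually enters), and then closes the gap by reiteration (Proposition~\ref{Prop: Interpolation with non-matching parameters}). That route is longer but structurally cleaner: it reduces the lower inclusion to the Jonsson--Wallin extension theory on $d$-sets, sidestepping the cutoff-versus-mollifier tuning problem altogether.

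Finally, your treatment of the case $\theta < \frac{1}{2}$ is too thin. The reverse inclusion $\H^\theta(\Omega) \subseteq (\L^2,\H_D^1)_{\theta,2}$ does not follow merely from density of $\C_D^\infty(\Omega)$ in $\H^\theta(\Omega)$ (Proposition~\ref{Prop: Smooth functions dense in HFs} is stated for $s \in (\frac{1}{2},1]$, not for $s<\frac{1}{2}$); one needs a quantitative approximation consistent with the interpolation norm, which is the same kind of $K$-functional work as above. The paper again handles this via Proposition~\ref{Prop: Interpolation with non-matching parameters}, where the case $s<\frac{1}{2}$ rests on Proposition~\ref{Prop: Hardy for vanishing boundary trace I} rather than Theorem~\ref{Thm: Fractional Hardy for HDs}.
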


\begin{remark}
\label{Rem: Rule of thumb for interpolation}
In combination with reiteration theorems, (2) allows to determine real and complex interpolation spaces between $\H^{s_0}(\Omega)$ and $\H_D^{s_1}(\Omega)$ for $0 \leq s_0 < \frac{1}{2} < s_1 \leq 1$, cf.\ \cite[Sec.\ ~1.10]{Triebel}. Roughly speaking, the trace zero condition on $D$ is maintained under interpolation whenever it is defined, i.e.\ if the resulting Sobolev space has differentiability order larger than $\frac{1}{2}$.
\end{remark}

For the rest of this section the numbers (1) and (2) will refer to the respective items of Theorem~\ref{Thm: Main interpolation theorem}. We can immediately give the purely functorial proof of (1).

\subsection*{Proof of (1)}
If $\frac{1}{2} < s < \frac{3}{2}$ and $D \neq \emptyset$ then $\H_D^s(\IR^d)$ is a complemented subspace of $\H^s(\IR^d)$ in virtue of the projection $\P_D$ introduced in Corollary~\ref{Cor: HFs is complemented}. Thus, by a general result for interpolation of complemented subspaces \cite[Sec.\ 1.17.1]{Triebel}, the set of spaces $\{\H_D^s(\IR^d)\}_{1/2 < s < 3/2}$ interpolates according to the same rules as $\{\H^s(\IR^d)\}_{1/2 < s < 3/2}$. In particular, the well-known interpolation results for Triebel-Lizorkin spaces on $\IR^d$ imply
\begin{align}
\label{Eq: Interpolation for HDs on whole space}
 \big(\H_D^{s_0}(\IR^d),\H_D^{s_1}(\IR^d)\big)_{\theta,2} = \H_D^{s_\theta}(\IR^d) = \big[\H_D^{s_0}(\IR^d),\H_D^{s_1}(\IR^d)\big]_\theta,
\end{align}
see e.g.\ \cite[Sec.\ 2.4.2, Thm.\ 1]{Triebel}. For brevity write $\mathfrak{F}(\H_D^{s_0}(\Omega), \H_D^{s_1}(\Omega))$ for any of the interpolation spaces occurring in (1). With $\E$ the extension operator provided by Theorem~\ref{Thm: Extension operator for HDs}, the retraction-coretraction theorem \cite[Sec.\ 1.2.4]{Triebel} and \eqref{Eq: Interpolation for HDs on whole space} yield that $\E(\H_D^{s_\theta}(\Omega))$ is a closed subspace of $\H_D^{s_\theta}(\IR^d)$ and that
\begin{align*}
 \E: \mathfrak{F}(\H_D^{s_0}(\Omega), \H_D^{s_1}(\Omega)) \to \E(\H_D^{s_\theta}(\Omega))
\end{align*}
is an isomorphism. Thus, $\H_D^{s_\theta}(\Omega)$ and $\mathfrak{F}(\H_D^{s_0}(\Omega), \H_D^{s_1}(\Omega))$ coincide as sets and due to
\begin{align*}
 \|f\|_{\H_D^{s_\theta}(\Omega)} \leq \|\E f\|_{\H_D^{s_\theta}(\IR^d)} \lesssim \|f\|_{\mathfrak{F}(\H_D^{s_0}(\Omega), \H_D^{s_1}(\Omega))} \qquad (f \in \H_D^{s_\theta}(\Omega) )
\end{align*}
and the bounded inverse theorem they also coincide as Banach spaces. This concludes the proof.

\subsection*{Proof of the first equality in (2)}

If $X_0$ and $X_1$ are Hilbert spaces such that $X_0 \subseteq X_1$ with dense and continuous inclusion then $[X_0,X_1]_{\theta} = (X_0,X_1)_{\theta,2}$ holds for each $\theta \in (0,1)$, cf.\ \cite[Cor.\ ~4.37]{Lunardi-Interpolation}. Since in virtue of Corollary~\ref{Cor: Form domain equals HD1} there is an equivalent norm on $\H_D^1(\Omega)$ that is induced by an inner product, the first equality in (2) follows.

\subsection*{Proof of the second equality in (2)}

The second equality in (2) is significantly harder to prove than (1) because the restriction operator $\R_D$, cf.\ Proposition~\ref{Prop: Jonsson-Wallin trace theorem}, is not defined on  $\L^2(\IR^d)$. Our proof relies on a characterization of real interpolation spaces via traces of Banach space valued fractional Sobolev spaces on the real line. Let us recall some notions and properties of these spaces first.

For $X$ a Banach space, $\L^2(\IR;X)$ is the usual Bochner-Lebesgue space of $X$ valued square integrable functions on the real line. For $s > 0$ the respective (fractional) Sobolev spaces $\H^s(\IR;X)$ are defined as in the scalar valued case, cf.\ Section~\ref{Sec: Sobolev mixed boundary}, upon replacing absolute values by norms on $X$. If $s \in \IR_+ \setminus \IN_0$ and $\lfloor s \rfloor$ denotes the integer part of $s$ then
\begin{align}
\label{Vector-valued Sobolev interpolation}
 \big(\H^{\lfloor s \rfloor}(\IR;X), \H^{\lfloor s \rfloor+1}(\IR;X) \big)_{s-\lfloor s \rfloor,2} = \H^s(\IR;X)
\end{align}
by literally the same proof as in \cite[Ex.\ 1.8]{Lunardi-Interpolation}. If $s>\frac{1}{2}$ then each $F \in \H^s(\IR; X)$ has a continuous representative and this gives rise to a continuous inclusion
\begin{align}
\label{BUC embedding}
 \H^s(\IR; X) \subseteq \mathrm{BUC}(\IR;X),
\end{align}
see \cite[Prop.\ 7.4]{Meyries-Veraar-SharpEmbeddings}, or \cite[Thm.\ 5.2]{Grisvard-IntermediateSpaces} for a more direct proof that also applies in the $X$ valued setting. Note that in \cite{Meyries-Veraar-SharpEmbeddings} and \cite{Grisvard-IntermediateSpaces} the spaces $\H^s(\IR; X)$ for non-integer $s$ are defined via \eqref{Vector-valued Sobolev interpolation}. If $s>\frac{1}{2}$ we will, starting from now, identify the elements in $\H^s(\IR; X)$ with their continuous representatives. In virtue of this identification $F \in \H^s(\IR; X)$ can be evaluated at each $t \in \IR$ in a meaningful way. 

The following characterization of real interpolation spaces due to Grisvard \cite[Thm.\ ~5.12]{Grisvard-Interpolation} is of fundamental importance for our further considerations. It gives a description of $(\theta,2)$-real interpolation spaces via traces of $\L^2$ based Sobolev spaces. This will enable us to study these interpolation spaces using the tools from Subsection~\ref{Subsec: Sobolev spaces with partially vanishing traces}.

\begin{theorem}[Grisvard]
\label{Thm: Grisvard trace theorem}
Let the Banach space $X_1$ be densely and continuously included into the Banach space $X_0$ and let $s > \frac{1}{2}$. Then
\begin{align*}
 \big(X_0,X_1\big)_{1-1/(2s), 2} = \big\{ \f_\otimes(0): \f_\otimes \in \L^2(\IR; X_1) \cap \H^s(\IR; X_0) \big\}
\end{align*}
as coinciding sets.
\end{theorem}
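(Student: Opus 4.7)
The plan is to establish both inclusions using the $K$-functional realization
\[
 \|a\|_{(X_0,X_1)_{\theta,2}}^2\simeq\int_0^\infty\bigl(t^{-\theta}\,K(t,a;X_0,X_1)\bigr)^2\,\frac{\d t}{t},\qquad\theta:=1-\tfrac{1}{2s}\in(\tfrac{1}{2},1).
\]

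For the inclusion ``$\supseteq$'', I would fix $f_\otimes\in\L^2(\IR;X_1)\cap\H^s(\IR;X_0)$ and put $a:=f_\otimes(0)$, which is well-defined by \eqref{BUC embedding}. For each $r>0$ the averaging decomposition
\[
 a=\Bigl(a-\tfrac{1}{r}\int_0^r f_\otimes(\sigma)\,\d\sigma\Bigr)+\tfrac{1}{r}\int_0^r f_\otimes(\sigma)\,\d\sigma=:a_0(r)+a_1(r)
\]
yields $\|a_1(r)\|_{X_1}\lesssim r^{-1/2}\|f_\otimes\|_{\L^2((0,r);X_1)}$ by Cauchy--Schwarz, while $\|a_0(r)\|_{X_0}$ admits -- via the fractional Hardy inequality applied to the $\H^s$-function $\sigma\mapsto f_\otimes(0)-f_\otimes(\sigma)$, which vanishes at $\sigma=0$ -- an estimate decaying like $r^{s-1/2}$. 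Naively choosing $r\simeq t^{1/s}$ gives the pointwise bound $t^{-\theta}K(t,a)\lesssim\|f_\otimes\|_{\L^2;X_1}+\|f_\otimes\|_{\H^s;X_0}$; to promote this to the required $\L^2(\tfrac{\d t}{t})$-bound, I would refine the argument by a Littlewood--Paley-type decomposition of $f_\otimes$ at dyadic scales and sum the contributions orthogonally via the Besov characterization of $\H^s(\IR;X_0)$.

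For the reverse inclusion ``$\subseteq$'', I would invoke the mean-method: each $a\in(X_0,X_1)_{\theta,2}$ admits a representation $a=\int_0^\infty u(\sigma)\,\tfrac{\d\sigma}{\sigma}$ with $\sigma^{-\theta}u\in\L^2(\tfrac{\d\sigma}{\sigma};X_0)$ and $\sigma^{1-\theta}u\in\L^2(\tfrac{\d\sigma}{\sigma};X_1)$, both controlled by $\|a\|_{(X_0,X_1)_{\theta,2}}$. The even extension
\[
 f_\otimes(t):=\int_{|t|}^\infty u(\sigma)\,\frac{\d\sigma}{\sigma}\qquad(t\in\IR)
\]
then satisfies $f_\otimes(0)=a$ and lies in $\L^2(\IR;X_1)\cap\H^1(\IR;X_0)$ by classical weighted Hardy inequalities in the respective target norms. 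The $\H^s$-regularity for fractional $s\in(\tfrac{1}{2},1)\cup(1,\tfrac{3}{2})$ would then be recovered from \eqref{Vector-valued Sobolev interpolation} and the reiteration theorem for real interpolation applied to the couple $(\L^2(\IR;X_0),\H^1(\IR;X_0))$.

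The main obstacle is twofold. In direction ``$\supseteq$'', the upgrade from a pointwise $K$-functional bound to the $\L^2(\tfrac{\d t}{t})$-bound is the crux and requires frequency-localized estimates rather than a single optimal choice of $r$. In direction ``$\subseteq$'', a direct verification of $\H^s$-regularity of the lifting for non-integer $s$ would be a delicate double integral computation on the Gagliardo side, most cleanly bypassed by the interpolation identity \eqref{Vector-valued Sobolev interpolation}. A minor additional point is compatibility with the paper's local Gagliardo convention (integration restricted to $|x-y|<1$), but the tail contributions at infinity are absorbed by the $\L^2(\IR;X_1)$ component of the norm.
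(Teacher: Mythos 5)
The paper does not prove this theorem; it quotes it directly from Grisvard \cite[Thm.\ 5.12]{Grisvard-Interpolation}, so there is no proof in the paper to compare against and your proposal must stand on its own. Your outline follows the standard trace-method pattern -- a $K$-functional splitting for one inclusion, an explicit $J$-method lifting for the other -- which is the right circle of ideas.

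There is, however, a genuine gap in ``$\subseteq$'' that you did not flag. The even extension $\f_\otimes(t)=\int_{|t|}^\infty u(\sigma)\,\tfrac{\d\sigma}{\sigma}$ has distributional derivative $\f_\otimes'(t)=\mp\,u(|t|)/|t|$, so the claimed membership $\f_\otimes\in\H^1(\IR;X_0)$ requires $\int_0^\infty t^{-2}\|u(t)\|_{X_0}^2\,\d t<\infty$. The $J$-method representation only supplies $\int_0^\infty t^{-2\theta-1}\|u(t)\|_{X_0}^2\,\d t<\infty$ with $\theta=1-\tfrac{1}{2s}$, and since $t^{-2}\le t^{-2\theta-1}$ on $(0,1]$ (and also on $[1,\infty)$) exactly when $\theta\ge\tfrac12$, i.e.\ $s\ge1$, the estimate you invoke fails precisely on the range $s\in(\tfrac12,1)$ -- which is the range actually used in the paper (cf.\ Proposition~\ref{Prop: Interpolation with non-matching parameters}). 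The embedding $X_1\hookrightarrow X_0$ does not repair it. In other words your construction builds in more time-regularity than the data allows, and the reduction via \eqref{Vector-valued Sobolev interpolation} cannot fix that. One must instead construct an $s$-dependent lift aimed directly at $\H^s(\IR;X_0)$, e.g.\ a dyadic/discrete $J$-method lift with cut-offs of width comparable to the scale $\sigma$, or Grisvard's own weighted $\mathrm{W}^{1,2}(t^{\alpha}\,\d t)$ spaces. For ``$\supseteq$'' you correctly identify that the single optimal choice $r\simeq t^{1/s}$ only yields the $(\theta,\infty)$-bound; but note that a straight dyadic discretisation of the same averaging estimate still produces, after Fubini, a logarithmic factor $\log(1/\sigma)$ multiplying the Hardy integrand, so ``sum the contributions orthogonally'' understates the difficulty -- one needs to estimate differences $f_\otimes(\sigma)-f_\otimes(2\sigma)$ between consecutive dyadic scales rather than $f_\otimes(0)-f_\otimes(\sigma)$ to avoid that loss. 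As it stands, neither inclusion is actually closed.
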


The notation used in Theorem~\ref{Thm: Grisvard trace theorem} stems from the fact that in the following $X_0$ and $X_1$ will always be function spaces on $\IR^d$. It is then convenient to identify $\L^2(\IR; X_1) \cap \H^s(\IR; X_0)$ with a function space on $\IR^{d+1}$. More precisely, if for $\f \in \C_c^\infty(\IR^{d+1})$ we put
\begin{align*}
 \f_\otimes: \IR \to \C_c^\infty(\IR^d), \quad t \mapsto \f(t,\argdot),
\end{align*}
where we think of $\IR^{d+1}$ as identified with $\IR \times \IR^d$, then the following holds.

\begin{lemma}
\label{Lem: Identification of functions on tensor product}
If $s \geq 0$ then $\f \mapsto \f_\otimes$ extends by density to a bounded operator from $\H^s(\IR^{d+1})$ into $\L^2(\IR; \H^s(\IR^d)) \cap \H^s(\IR; \L^2(\IR^d))$. This extension is also denoted by $\f \mapsto \f_\otimes$ in the following.
\end{lemma}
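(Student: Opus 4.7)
The plan is to prove the continuity estimate
\[
\|\f_\otimes\|_{\L^2(\IR;\H^s(\IR^d))} + \|\f_\otimes\|_{\H^s(\IR;\L^2(\IR^d))} \lesssim \|\f\|_{\H^s(\IR^{d+1})}
\]
for $\f \in \C_c^\infty(\IR^{d+1})$ and then to extend by density, using that $\C_c^\infty(\IR^{d+1})$ is dense in $\H^s(\IR^{d+1})$ as recalled in Subsection~\ref{Subsec: Sobolev spaces}. Before doing so, I would observe that the intersection $\L^2(\IR;\H^s(\IR^d)) \cap \H^s(\IR;\L^2(\IR^d))$ is a Banach space under the natural norm, both factors embedding continuously into $\L^2(\IR;\L^2(\IR^d)) \cong \L^2(\IR^{d+1})$. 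For $\f \in \C_c^\infty(\IR^{d+1})$ the smooth Bochner-valued function $t \mapsto \f(t,\argdot)$ visibly coincides with $\f$ under this identification, which ensures that the extension is independent of which target norm one works with and that it agrees on $\C_c^\infty(\IR^{d+1})$ with the pointwise definition of $\f_\otimes$.

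For the estimate itself I would pass to the Fourier side. Denoting by $(\tau,\xi) \in \IR \times \IR^d$ the variable dual to $(t,x)$ and using that $\H^s$ coincides with the Bessel potential space $\H_2^s$ (as noted in the text for the scalar case and as follows for Hilbert-valued Sobolev spaces from Plancherel's theorem together with \eqref{Vector-valued Sobolev interpolation}), Plancherel in $x$ and in $t$ yields
\begin{align*}
\|\f_\otimes\|_{\L^2(\IR;\H^s(\IR^d))}^2 &\simeq \iint_{\IR^{d+1}} (1+\abs{\xi}^2)^s \abs{\widehat{\f}(\tau,\xi)}^2 \, \d \tau \, \d \xi, \\
\|\f_\otimes\|_{\H^s(\IR;\L^2(\IR^d))}^2 &\simeq \iint_{\IR^{d+1}} (1+\abs{\tau}^2)^s \abs{\widehat{\f}(\tau,\xi)}^2 \, \d \tau \, \d \xi,
\end{align*}
with $\widehat{\f}$ the full Fourier transform of $\f$ on $\IR^{d+1}$. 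Each of these is majorized by $\iint (1+\abs{\tau}^2+\abs{\xi}^2)^s \abs{\widehat{\f}(\tau,\xi)}^2 \, \d\tau \, \d\xi \simeq \|\f\|_{\H^s(\IR^{d+1})}^2$, by means of the elementary pointwise bound that both $(1+\abs{\xi}^2)^s$ and $(1+\abs{\tau}^2)^s$ lie below $(1+\abs{\tau}^2+\abs{\xi}^2)^s$.

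The main technical point to verify will be the Bessel potential characterization of $\H^s(\IR;X)$ for a Hilbert space $X$ when $s$ is not an integer, since the definition given in the excerpt proceeds through the integer-order Bochner spaces and the Gagliardo-type seminorm. This is however routine: the vector-valued Plancherel theorem converts the $\L^2(\IR;X)$ double-integral seminorm into a weighted integral in $\tau$ with weight $\abs{\tau}^{2\theta}$, and the equivalence with $\int (1+\abs{\tau}^2)^s \|\widehat{F}(\tau)\|_X^2 \, \d\tau$ follows exactly as in the scalar case by a standard low/high frequency split. Density of $\C_c^\infty(\IR^{d+1})$ then furnishes the unique continuous extension, completing the argument; I do not anticipate any substantive obstacle beyond this bookkeeping.
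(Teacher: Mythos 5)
Your proof is correct, but it takes a genuinely different route from the paper's. The paper handles integer $s$ by Fubini's theorem (which gives the estimate with constant $1$) and then obtains the non-integer case by real and complex interpolation: it interpolates between the integer claims for $k$ and $k+1$, using \eqref{Vector-valued Sobolev interpolation} to identify the real interpolation space as $\H^s(\IR;\L^2(\IR^d))$ and the Hilbert-valued Riesz--Thorin identity $[\L^2(\IR;X_0),\L^2(\IR;X_1)]_\theta = \L^2(\IR;[X_0,X_1]_\theta)$ to identify the complex one as $\L^2(\IR;\H^s(\IR^d))$. You instead pass to the Fourier side and reduce the whole claim, for every $s\geq 0$ at once, to the trivial pointwise inequalities $(1+|\xi|^2)^s \leq (1+|\tau|^2+|\xi|^2)^s$ and $(1+|\tau|^2)^s \leq (1+|\tau|^2+|\xi|^2)^s$. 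This is more direct and uniform in $s$; its cost is that you must invoke the Plancherel/Bessel-potential characterization of the Hilbert-space-valued fractional Sobolev space $\H^s(\IR;X)$, which you correctly single out as the one substantive ingredient and correctly note is routine for $X$ Hilbert (it follows from vector-valued Plancherel and the usual estimate $\int_{|h|<1} |e^{ih\tau}-1|^2 |h|^{-1-2\theta}\,\d h \simeq \min(|\tau|^2,|\tau|^{2\theta})$). The paper's route avoids needing this characterization explicitly, at the price of invoking two separate interpolation identities; in a paper already built on interpolation machinery that is a natural trade-off, but your Fourier argument is equally valid and somewhat cleaner.
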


\begin{proof}
Recall that $\C_c^\infty(\IR^{d+1})$ is dense in $\H^s(\IR^{d+1})$ for each $s \geq 0$. If $s \in \IN_0$ then Fubini's theorem yields
\begin{align*}
 \|\f_\otimes\|_{\L^2(\IR; \H^s(\IR^d))}^2 +  \|\f_\otimes\|_{\H^s(\IR; \L^2(\IR^d))}^2 \leq \|\f\|_{\H^s(\IR^{d+1})}^2 \qquad (\f \in \C_c^\infty(\IR^{d+1}))
\end{align*}
and the conclusion follows.

Now, assume $s \in \IR_+ \setminus \IN_0$ and put $k:= \lfloor s \rfloor$ and $\theta := s - k$. By the usual interpolation rules for Triebel-Lizorkin spaces, see e.g.\ \cite[Sec.\ 2.4.2, Thm.\ 1]{Triebel},
\begin{align}
\label{Identification of functions on tensorproduct: Eq1}
 \big(\H^k(\IR^{d+1}), \H^{k +1}(\IR^{d+1})\big)_{\theta ,2} = \H^s(\IR^{d+1}) =  \big[\H^k(\IR^{d+1}), \H^{k +1}(\IR^{d+1})\big]_\theta .
\end{align}
Hence, $(\theta,2)$-real and $\theta$-complex interpolation of the claims for $k$ and $k+1$ show that $\f \mapsto \f_\otimes$ acts as a bounded operator from $\H^s(\IR^{d+1})$ into both
\begin{align*}
 \big(\H^k(\IR;\L^2(\IR^d)), \H^{k+1}(\IR;\L^2(\IR^d)) \big)_{\theta,2} \quad \text{and} \quad \big[\L^2(\IR; \H^k(\IR^d)), \L^2(\IR; \H^{k + 1}(\IR^d)) \big]_\theta.
\end{align*}
To conclude, note that by \eqref{Vector-valued Sobolev interpolation} the left-hand space equals $\H^s(\IR; \L^2(\IR^d))$, whereas the right-hand space can be revealed as $\L^2(\IR; \H^s(\IR^d))$ using the interpolation rule
\begin{align*}
 \big[\L^2(\IR; X_0), \L^2(\IR; X_1)\big]_\theta = \L^2\big(\IR; \big[X_0,X_1\big]_\theta\big),
\end{align*}
see \cite[Thm.\ 5.1.2]{Bergh-Loefstroem} for details, and applying \eqref{Identification of functions on tensorproduct: Eq1} for function spaces on $\IR^d$. 
\end{proof}

As a technical tool we need the following property of $l$-sets. To distinguish objects in $\IR^{d+1}$ from their counterparts in $\IR^d$ we shall keep on using bold letters for the former.

\begin{lemma}
\label{Lem: Tensoring d-sets}
Let $0 < l \leq d$. If $E \subseteq \IR^d$ is an $l$-set and $I \subseteq \IR$ is an interval that is not reduced to a single point, then $I \times E$ is an $(l+1)$-set in $\IR^{d+1}$.
\end{lemma}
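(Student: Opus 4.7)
The plan is to introduce the product measure $\nu := \m_1|_I \otimes \m_l|_E$ on the Borel sets of $\IR^{d+1}$, show that it is Ahlfors regular of dimension $l+1$ at points of $I \times E$, and then compare it to $\m_{l+1}$.

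Fix $(t_0, x_0) \in I \times E$ and $r > 0$. The Euclidean ball in $\IR^{d+1}$ is sandwiched between two product sets,
\[
(t_0 - r/\sqrt{2}, t_0 + r/\sqrt{2}) \times B(x_0, r/\sqrt{2}) \subseteq \B((t_0,x_0), r) \subseteq (t_0 - r, t_0 + r) \times B(x_0, r),
\]
so Fubini together with the $l$-regularity of $E$ immediately yields $\nu\bigl(\B((t_0,x_0), r) \cap (I \times E)\bigr) \simeq r^{l+1}$, provided $r$ stays below some threshold $r_0 > 0$. The need for such a threshold is twofold: the $l$-set condition on $E$ is only guaranteed for small radii, and for the lower bound one needs $(t_0 - r/\sqrt{2}, t_0 + r/\sqrt{2}) \cap I$ to have length comparable to $r$, which is automatic once $r$ is small in terms of $\m_1(I)$. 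Both restrictions are harmless because by Remark~\ref{Rem: Remarks on d-1 set}(1) the defining estimate of an $(l+1)$-set may be verified for radii in $(0, r_0]$ only.

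The next step is to upgrade this bound on $\nu$ to the corresponding bound on $\m_{l+1}$. The upper bound follows from a direct covering argument: since $E$ is $l$-regular, $E \cap B(x_0, r)$ admits a covering by at most $\lesssim (r/\epsilon)^l$ balls of radius $\epsilon$ centered in $E$, and $(t_0 - r, t_0 + r)$ by $\lesssim r/\epsilon$ intervals of length $\epsilon$; their products cover $(I \times E) \cap \B((t_0, x_0), r)$ by $\lesssim (r/\epsilon)^{l+1}$ sets of diameter $\lesssim \epsilon$, whence the bound $\m_{l+1}((I \times E) \cap \B((t_0,x_0),r)) \lesssim r^{l+1}$ follows by letting $\epsilon \to 0$. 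For the lower bound I would invoke the mass distribution principle: applying the right-hand inclusion above at any point $y \in I \times E$ and radius $\rho \leq r_0$ yields $\nu(\B(y, \rho)) \lesssim \rho^{l+1}$, and this pointwise upper bound forces $\nu(A) \lesssim \m_{l+1}(A)$ for every Borel $A \subseteq I \times E$; specialising to $A = (I \times E) \cap \B((t_0, x_0), r)$ closes the argument.

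The proof is essentially a bookkeeping exercise; the only point requiring some care is the simultaneous choice of a threshold $r_0$ that accommodates both the $l$-regularity of $E$ and the length of $I$. The flexibility granted by Remark~\ref{Rem: Remarks on d-1 set}(1) makes this painless, so no serious obstacle is expected.
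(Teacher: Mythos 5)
Your argument is correct, but it takes a genuinely different route from the paper. The paper reduces the whole lemma to a single citation of Federer \cite[Thm.\ 2.10.45]{Federer}, which gives directly that $\bm_{l+1}(U \times V) \simeq \abs{U} \cdot \m_l(V)$ for Lebesgue measurable $U \subseteq \IR$ and $V \subseteq \IR^d$ of finite $\m_l$-measure; combined with the same sandwich inclusion of the ball between product sets, this finishes the proof in a few lines. You instead manufacture the needed two-sided comparison between $\m_{l+1}$ and the product measure $\nu$ by hand: the upper bound via a packing/covering count on $E$ using both halves of the $l$-regularity, and the lower bound via the mass distribution principle (easy direction of Frostman), after first observing that $\nu(\B(y,\rho)) \lesssim \rho^{l+1}$ for $y \in I \times E$ and small $\rho$. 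Both routes are sound. What the paper's approach buys is brevity and a ready-made, sharper product formula; what yours buys is self-containment -- you avoid invoking a fairly heavy theorem from geometric measure theory and only use elementary covering arguments plus the standard mass distribution principle. The one bookkeeping issue you should make explicit is that $\nu$ is supported on $I \times E$, so that the pointwise ball estimate at points of $I \times E$ does extend to all sets of small diameter (any set $U$ with $\nu(U) > 0$ meets $I \times E$, hence is contained in a ball of radius $\diam U$ centered there); this is implicit in your write-up and is exactly what makes the mass distribution principle applicable. Aside from that, no gap.
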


\begin{proof}
First note that for $(t,x) \in I \times E$ and $r >0$ it holds
\begin{align}
\label{Eq: Tensoring d-sets}
 (t-r,t+r) \times B(x,r) \subseteq \B((t,x),2r) \subseteq (t-2r,t+2r) \times B(x,2r).
\end{align}
It is a classical result that $\bm_{l+1}(U \times V) \simeq \abs{U} \cdot  \m_l(V)$ holds with implicit constants depending only on $d$ provided that $U \subseteq \IR$ is Lebesgue measurable and $V \subseteq \IR^d$ has finite $\m_l$-measure, see e.g.\ \cite[Thm.\ 2.10.45]{Federer}. Thus, intersecting the inclusions in \eqref{Eq: Tensoring d-sets} with $I \times E$ leads to
\begin{align*}
 \bm_{l+1}\big((I \times E) \cap \B((t,x),2r)\big) \simeq r^{l+1} \qquad ((t,x) \in I \times E,\, 2r < 1).
\end{align*}
By Remark~\ref{Rem: Remarks on d-1 set} this concludes the proof. 
\end{proof}

\begin{corollary}
\label{Cor: Omega-Odot-D is a d-set}
The \emph{infinite $D$ cylinder} $\Dcyl:= (\{0\} \times \Omega) \cup (\IR \times D)$ is a $d$-set in $\IR^{d+1}$.
\end{corollary}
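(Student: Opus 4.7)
The plan is to decompose $\Dcyl = E_1 \cup E_2$ with $E_1 := \{0\} \times \Omega$ and $E_2 := \IR \times D$, establish the $d$-set property on each piece separately, and then invoke Remark~\ref{Rem: Remarks on d-1 set}(4) (whose proof runs verbatim in $\IR^{d+1}$) to conclude that the union is a $d$-set in $\IR^{d+1}$. If $D = \emptyset$ only the analysis of $E_1$ is required.

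For the cylindrical piece $E_2$, assuming $D$ is a $(d-1)$-set, Lemma~\ref{Lem: Tensoring d-sets} applied with $l = d-1$, $E = D$, and $I = \IR$ (a non-trivial interval) immediately yields that $E_2$ is a $d$-set in $\IR^{d+1}$.

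For the slice $E_1$ I would fix $(0,x) \in E_1$ and $r \in (0,1]$ and observe that
\begin{align*}
 E_1 \cap \B((0,x),r) = \{0\} \times (\Omega \cap B(x,r)).
\end{align*}
Since $y \mapsto (0,y)$ is an isometric embedding of $\IR^d$ into $\IR^{d+1}$, the $d$-dimensional Hausdorff measure of $\{0\} \times A$ coincides with the Lebesgue measure $\abs{A}$ for every Borel set $A \subseteq \IR^d$. Hence
\begin{align*}
 \bm_d(E_1 \cap \B((0,x),r)) = \abs{\Omega \cap B(x,r)} \simeq r^d,
\end{align*}
the last equivalence being the $d$-set property of $\Omega$ furnished by plumpness, see Remark~\ref{Rem: Remark on geometric assumptions}(3). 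The inequalities there are stated for $r < \diam(\Omega)$, but by Remark~\ref{Rem: Remarks on d-1 set}(1) the defining inequalities of an $l$-set may be imposed on any fixed range $(0,r_0]$ in place of $(0,1]$, so this causes no issue.

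I do not anticipate any real difficulty in this argument; it is essentially a matter of recombining the preceding observations and Lemma~\ref{Lem: Tensoring d-sets}. The only subtlety worth flagging is that being a $d$-set is a notion \emph{relative to} the ambient dimension, so one has to identify $\bm_d$ restricted to the hyperplane $\{0\} \times \IR^d$ with the Lebesgue measure on $\IR^d$ used in the plumpness condition.
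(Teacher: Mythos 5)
Your proposal is correct and follows essentially the same route as the paper: decompose $\Dcyl$ into the slice $\{0\}\times\Omega$ and the cylinder $\IR\times D$, handle the latter via Lemma~\ref{Lem: Tensoring d-sets}, the former via the plumpness of $\Omega$ (Remark~\ref{Rem: Remark on geometric assumptions}), and combine with Remark~\ref{Rem: Remarks on d-1 set}(4). The paper's two-line proof leaves the identification of $\bm_d$ on the hyperplane $\{0\}\times\IR^d$ with Lebesgue measure implicit, so your explicit treatment of that point is a welcome addition rather than a deviation.
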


\begin{proof}
If $D \neq \emptyset$ then Lemma~\ref{Lem: Tensoring d-sets} asserts that $\IR \times D$ is a $d$-set in $\IR^{d+1}$. Hence, the conclusion follows by Remarks~\ref{Rem: Remarks on d-1 set} and \ref{Rem: Remark on geometric assumptions}.
\end{proof}

Our next result shows that functions on $\Omega$ can be trivially extended to $\Dcyl$ without losing Sobolev regularity. Here, the fractional Hardy type inequality from Section~\ref{Sec: Fractional Hardy for HDs} comes into play.

\begin{proposition}
\label{Prop: Zero extension to Omega-Odot-D}
Let $s \in (\frac{1}{2},1)$ and $f \in \H_D^s(\Omega)$. Then the function
\begin{align*}
 f_\uparrow:  \Dcyl \to \IC, \qquad f_\uparrow(t,x) = \begin{cases}
				    f(x), & \text{if $t=0$, $x\in \Omega$,} \\
				    0, & \text{if $x \in D$,}
                                 \end{cases}
\end{align*}
is in $\H^s(\Dcyl,\, \bm_d)$, where $\bm_d$ is the $d$-dimensional Hausdorff measure in $\IR^{d+1}$, and satisfies the estimate $\|f_\uparrow\|_{\H^s(\Omega \uparrow D,\, \bm_d)} \lesssim \|f\|_{\H^s(\Omega)}$. A similar statement holds if $s \in (0,\frac{1}{2})$ and $f \in \H^s(\Omega)$.
\end{proposition}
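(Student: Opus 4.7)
The plan is to compute $\|f_\uparrow\|_{\H^s(\Dcyl,\, \bm_d)}^2$ directly from the intrinsic Slobodeckij definition (analogous to Definition~\ref{Def: Sobolev spaces on (d-1)-sets} but with kernel exponent $d+2s$ corresponding to the $d$-set $\Dcyl$, cf.\ Corollary~\ref{Cor: Omega-Odot-D is a d-set}) and to exploit the disjoint decomposition $\Dcyl = (\{0\} \times \Omega) \sqcup (\IR \times D)$---disjoint because $\Omega$ is open and $D \subseteq \bd \Omega$. On $\{0\} \times \Omega$ the measure $\bm_d$ is (a dimensional constant times) Lebesgue measure on $\Omega$, while on $\IR \times D$ it factorizes as $\d t \otimes \d \m_{d-1}$ by a standard product formula for Hausdorff measures; in particular the $\L^2(\Dcyl,\, \bm_d)$-term of the norm reduces immediately to $\|f\|_{\L^2(\Omega)}^2$.

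Splitting the Slobodeckij double integral into its three natural pieces, the part with both arguments in $\IR \times D$ vanishes because $f_\uparrow$ is identically zero there. The part with both arguments in $\{0\} \times \Omega$ becomes the intrinsic Slobodeckij seminorm of $f$ on $\Omega$, which is dominated by $\|g\|_{\H^s(\IR^d)}^2$ for any extension $g$ of $f$; passing to the infimum bounds it by $\|f\|_{\H^s(\Omega)}^2$ (cf.\ Remark~\ref{Rem: Sobolev spaces not locally defined}).

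The crucial piece is the mixed one, which after symmetry and Fubini reads
\begin{align*}
 T = 2 \int_\Omega \abs{f(x)}^2 \int_{(\IR \times D) \cap \B((0,x),1)} \frac{1}{\abs{(0,x)-\mathbf{z}}^{d+2s}} \; \d \bm_d(\mathbf{z}) \; \d x.
\end{align*}
For fixed $x \in \Omega$ I would estimate the inner integral by a dyadic decomposition into spherical annuli of radii $\simeq 2^k \dist_D(x)$ around $(0,x)$. Using that $\IR \times D$ is a $d$-set in $\IR^{d+1}$ (Lemma~\ref{Lem: Tensoring d-sets}), the off-set bound $\bm_d((\IR \times D) \cap \B((0,x),r)) \lesssim r^d$ follows from a standard covering argument, so each annulus contributes at most $2^{-2sk}\dist_D(x)^{-2s}$ and the geometric series in $k$ sums to $\dist_D(x)^{-2s}$. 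This yields $T \lesssim \int_\Omega \abs{f(x)}^2 \dist_D(x)^{-2s} \, \d x$.

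To close the argument, for $s \in (\tfrac{1}{2},1)$ I would invoke the fractional Hardy type inequality Theorem~\ref{Thm: Fractional Hardy for HDs} to bound this by $\|f\|_{\H_D^s(\Omega)}^2$; for $s \in (0,\tfrac{1}{2})$ the pointwise estimate $\dist_D \geq \dist_{\bd \Omega}$ together with Proposition~\ref{Prop: Hardy for vanishing boundary trace I} yields a bound by $\|f\|_{\H^s(\Omega)}^2$ without requiring any vanishing trace. The only genuine obstacle is the mixed piece: every other contribution is bookkeeping, while the mixed piece is precisely the consumer of the fractional Hardy inequality developed in Section~\ref{Sec: Fractional Hardy for HDs}---indeed, it is the structural reason that this inequality was established in the first place.
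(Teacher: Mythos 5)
Your proposal is correct and follows essentially the same route as the paper: the disjoint split of $\Dcyl$ into $\{0\}\times\Omega$ and $\IR\times D$, the vanishing of the $D$--$D$ block, a dyadic annular estimate using the $d$-set property of $\IR \times D$ (the paper equivalently uses that $\Dcyl$ itself is a $d$-set) to reduce the cross term to the weighted integral $\int_\Omega \abs{f}^2 \dist_D^{-2s}$, and finally Theorem~\ref{Thm: Fractional Hardy for HDs} (resp.\ Proposition~\ref{Prop: Hardy for vanishing boundary trace I} for $s<\tfrac12$). The only cosmetic difference is that you decompose the annuli outward from radius $\simeq\dist_D(x)$ while the paper decomposes inward from radius $1$; both yield the same geometric series.
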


\begin{proof}
Let $s \in (\frac{1}{2},1)$. Since the outer measure $E \mapsto \bm_d(\{0\} \times E)$ on $\IR^d$ is a translation invariant Borel measure that assigns finite measure to the unit cube, the induced measure coincides up to a norming constant $c_d > 0$ with the $d$-dimensional Lebesgue measure, see e.g.\ \cite[Thm.\ 8.1]{Bauer-Masstheorie}. Thus, $f_\uparrow \in \L^2(\Dcyl,\, \bm_d)$ is a consequence of $f \in \L^2(\Omega)$. 

To compute the complete $\H^s(\Dcyl, \bm_d)$ norm of $f_\uparrow$, split integration over $(\Dcyl) \times (\Dcyl)$ according to the definition of $f_\uparrow$ and use Tonelli's theorem to find
\begin{align}
\label{Zero extension to Omega-Odot-D: Eq1}
\begin{split}
&\iint_{\substack{\x, \y \in \Omega \uparrow D \\ \abs{\x-\y}<1}} \frac{\abs{f_\uparrow(\x) - f_\uparrow(\y)}^2}{\abs{\x-\y}^{d+2s}} \; \d \bm_d(\x) \; \d \bm_d(\y) \\
&\leq c_d \iint_{\substack{x,y \in \Omega \\ \abs{x-y}<1}} \frac{\abs{f(x) - f(y)}^2}{\abs{x-y}^{d+2s}} \; \d x \; \d y
+2 \int_{\{0\} \times \Omega} \int_{\substack{\x \in \IR \times D \\ \abs{\x - \y}<1}} \frac{\abs{f_\uparrow(\y)}^2}{\abs{\x - \y}^{d+2s}} \; \d \bm_d(\x) \; \d \bm_d(\y).
\end{split}
\end{align}
The first integral on the right-hand side is bounded by $\|f\|_{\H_D^s(\Omega)}^2$. To handle the second one fix $\y = (0,y) \in \{0\} \times \Omega$. If the inner domain of integration is non-empty then there exists an $n_0 \in \IN_0$ such that $2^{-(n_0+1)} < \bdist(\y, \IR \times D) < ~2^{-n_0}$. Splitting the integral into frame-like pieces 
\begin{align*}
 \mathbf{C}_n:= \big(\IR \times D\big) \cap \big((\B(\y, 2^{-n}) \setminus \B(\y, 2^{-(n+1)})\big) \qquad (0 \leq n \leq n_0)
\end{align*}
leads to
\begin{align*}
 \int_{\substack{\x \in \IR \times D \\ \abs{\x - \y}<1}} \frac{1}{\abs{\x - \y}^{d+2s}} \; \d \bm_d(\x) 
&\leq \sum_{n=0}^{n_0} 2^{(n+1)(d+2s)} \bm_d(\mathbf{C}_n) \lesssim \sum_{n=0}^{n_0} 2^{(n+1)(d+2s)} 2^{-dn},
\intertext{where the second step follows since $\Dcyl$ is a $d$-set in $\IR^{d+1}$. An explicit computation gives}
&=\frac{2^{d+2s}}{2^{2s}-1} (2^{2s(n_0+1)} -1) \lesssim \bdist(\y, \IR \times D)^{-2s} = \d(y,D)^{-2s}
\end{align*}
with implicit constants depending solely on $d$ and $s$. Now, Theorem~\ref{Thm: Fractional Hardy for HDs} allows to estimate
\begin{align*}
 \int_{\substack{\x \in \IR \times D \\ \abs{\x - \y}<1}} \frac{\abs{f_\uparrow(\y)}^2}{\abs{\x - \y}^{d+2s}} \; \d \bm_d(\x) \; \d \bm_d(\y)
\lesssim \int_\Omega \frac{\abs{f(y)}^2}{\d_D(y)^{2s}} \; \d y \lesssim \|f\|_{\H_D^s(\Omega)}^2.
\end{align*}
With a view on \eqref{Zero extension to Omega-Odot-D: Eq1} this completes the proof in the case $s>\frac{1}{2}$. 

If $s < \frac{1}{2}$ the argument is literally the same except that we can simply rest on Proposition~\ref{Prop: Hardy for vanishing boundary trace I} instead of Theorem~\ref{Thm: Fractional Hardy for HDs}, noting that $\d_D(y) \geq \d_{\bd \Omega}(y)$ for each $y \in \Omega$.
\end{proof}

We have now collected all necessary tools to establish the second equality in (2). The challenge is, as it turns out, to determine \emph{any} interpolation space between $\L^2(\Omega)$ and a Sobolev space incorporating mixed boundary conditions in the first place. This is done in the subsequent proposition. The actual proof can then be completed using reiteration techniques.

\begin{proposition}
\label{Prop: Interpolation with non-matching parameters}
If $s \in (0,1)$ and $\vartheta = \frac{2}{2s+1}$ then
\begin{align*}
 \big(\L^2(\Omega), \H_D^{s+1/2}(\Omega)\big)_{\vartheta s,2} = \begin{cases}
                                                          \H_D^s(\Omega), &\text{if $s > \frac{1}{2}$,} \\
							  \H^s(\Omega), &\text{if $s < \frac{1}{2}$.}
                                                         \end{cases}
\end{align*}
\end{proposition}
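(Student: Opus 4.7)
The plan is to use Grisvard's trace theorem (Theorem~\ref{Thm: Grisvard trace theorem}) as a bridge between the interpolation space on $\Omega$ and Sobolev theory on the cylinder $\Dcyl \subset \IR^{d+1}$. First observe $\vartheta s = 1 - \frac{1}{2(s+1/2)}$, so applying Grisvard with $s' = s+\frac{1}{2} > \frac{1}{2}$ characterises the target space $J_s:=(\L^2(\Omega),\H_D^{s+1/2}(\Omega))_{\vartheta s,2}$ as the set of traces $\f_\otimes(0)$ of functions $\f_\otimes \in \L^2(\IR;\H_D^{s+1/2}(\Omega)) \cap \H^{s+1/2}(\IR;\L^2(\Omega))$. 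The workhorse will be the standard Plancherel-based equivalence $\H^{s+1/2}(\IR^{d+1}) \cong \L^2(\IR;\H^{s+1/2}(\IR^d)) \cap \H^{s+1/2}(\IR;\L^2(\IR^d))$ (which complements Lemma~\ref{Lem: Identification of functions on tensor product} into an isomorphism) together with the fact that, under this identification, the $\L^2$-valued evaluation $\f_\otimes(0)$ coincides with the Jonsson-Wallin hyperplane trace at $\{0\} \times \IR^d$.

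For the inclusion $\H_D^s(\Omega) \hookrightarrow J_s$ (and analogously $\H^s(\Omega) \hookrightarrow J_s$ when $s < \frac{1}{2}$), I will start from $f$ in the relevant Sobolev space, zero-extend it via Proposition~\ref{Prop: Zero extension to Omega-Odot-D} to $f_\uparrow \in \H^s(\Dcyl,\bm_d)$, and invoke the Jonsson-Wallin extension theorem for the $d$-set $\Dcyl$ in $\IR^{d+1}$ (Corollary~\ref{Cor: Omega-Odot-D is a d-set}; note $s + \frac{1}{2} \in (\frac{1}{2},\frac{3}{2})$ is in the admissible trace range) to obtain $F \in \H^{s+1/2}(\IR^{d+1})$ with $\R_\Dcyl F = f_\uparrow$. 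Setting $\f_\otimes(t) := F_\otimes(t)|_\Omega$, Lemma~\ref{Lem: Identification of functions on tensor product} delivers the required vector-valued Sobolev regularity. The vanishing of $f_\uparrow$ on $\IR \times D$ together with a Fubini-type slicing argument then yields $\R_D F_\otimes(t) = 0$ for a.e.\ $t$, so $\f_\otimes(t) \in \H_D^{s+1/2}(\Omega)$ almost everywhere; and $\f_\otimes(0) = f$ follows because the $\L^2$-valued evaluation at $t=0$ matches the Jonsson-Wallin hyperplane trace, which further restricts to $f_\uparrow|_{\{0\}\times\Omega} = f$.

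For the reverse inclusion $J_s \hookrightarrow \H^s(\Omega)$ (respectively $\H_D^s(\Omega)$), I will extend slice-wise. Given $\f_\otimes$ with $\f_\otimes(0) = f$, set $\tilde \f_\otimes(t) := \E(\f_\otimes(t))$ using the extension operator from Theorem~\ref{Thm: Extension operator for HDs}, bounded at both $\L^2$ and $\H_D^{s+1/2}$ level. The Plancherel isomorphism then provides $\tilde F \in \H^{s+1/2}(\IR^{d+1})$ with $\tilde F_\otimes = \tilde \f_\otimes$, and the classical hyperplane trace puts $\tilde \f_\otimes(0) = \E(f)$ in $\H^s(\IR^d)$, so $f \in \H^s(\Omega)$ by restriction. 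This closes the case $s < \frac{1}{2}$. For $s > \frac{1}{2}$, I will further exploit that each $\tilde \f_\otimes(t) \in \H_D^{s+1/2}(\IR^d)$ has zero Jonsson-Wallin trace on $D$; by Fubini this upgrades to $\tilde F \equiv 0$ $\bm_d$-a.e.\ on $\IR \times D$. Consistency of Jonsson-Wallin restrictions along the inclusions $\{0\}\times D \subset \{0\} \times \IR^d$ and $\{0\} \times D \subset \IR \times D$ then forces $\R_D \E(f) = 0$, placing $\E(f)$ in $\H_D^s(\IR^d)$ and hence $f$ in $\H_D^s(\Omega)$.

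I expect the main obstacle to be orchestrating the Jonsson-Wallin machinery on $d$-sets inside $\IR^{d+1}$ rather than in the codimension-one setting of Proposition~\ref{Prop: Jonsson-Wallin trace theorem}: the extension from $\H^s(\Dcyl,\bm_d)$ to $\H^{s+1/2}(\IR^{d+1})$ requires the $l$-set version of Jonsson-Wallin, identifying the $\L^2$-valued evaluation with the hyperplane trace relies on consistency of averaged limits, and the composition of restrictions across different intermediate $d$-sets must be handled carefully. Once these standard but delicate bookkeeping steps are in place, the argument is essentially a linear combination of Grisvard's trace theorem, Proposition~\ref{Prop: Zero extension to Omega-Odot-D}, Theorem~\ref{Thm: Extension operator for HDs}, and the Fourier identification of $\H^{s+1/2}(\IR^{d+1})$.
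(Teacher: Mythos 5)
Your $\supseteq$ direction is essentially the paper's argument: zero-extend via Proposition~\ref{Prop: Zero extension to Omega-Odot-D}, apply the Jonsson--Wallin extension for the $d$-set $\Dcyl$ in $\IR^{d+1}$, transport via Lemma~\ref{Lem: Identification of functions on tensor product}, and feed the result into Grisvard's theorem. Where you write ``Fubini-type slicing argument'' and ``the $\L^2$-valued evaluation matches the hyperplane trace'', the paper fills these in by approximating $\g$ with a sequence $(\g_n)_n\subseteq\C_{\IR\times D}^\infty(\IR^{d+1})$ (Proposition~\ref{Prop: Smooth functions dense in HFs}) and passing to a subsequence so that the slice-wise limits exist for a.e.\ $t$ in $\H^{s+1/2}(\IR^d)$ and at $t=0$ in $\L^2(\Omega)$; this is important because the averaging limits defining $\R_D\g_\otimes(t)$ and $\R_{\{0\}\times\Omega}\g$ live over balls of different dimensions and do not ``obviously'' match.

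Your $\subseteq$ direction is genuinely different from the paper's, and markedly heavier. The paper simply interpolates the extension operator $\E$ of Theorem~\ref{Thm: Extension operator for HDs} between $\L^2$ and $\H_D^{s+1/2}$, uses the Triebel--Lizorkin interpolation identity on $\IR^d$, and then a short reiteration argument with $\R_D$ (exploiting that real interpolation is exact of type $\gamma$, so $\R_D$ vanishes on the interpolation space because it vanishes on the endpoint $Y_1$). Your route ``reverses'' the cylinder machinery: slice-wise extend by $\E$, reassemble a function on $\IR^{d+1}$, then restrict. This needs several facts the paper never establishes: \textbf{(a)} the reverse direction of Lemma~\ref{Lem: Identification of functions on tensor product}, i.e.\ that $\f\mapsto\f_\otimes$ is an \emph{isomorphism} onto $\L^2(\IR;\H^{s+1/2}(\IR^d))\cap\H^{s+1/2}(\IR;\L^2(\IR^d))$ (true by Fourier--Plancherel, but not in the paper); \textbf{(b)} your claim ``by Fubini this upgrades to $\tilde F\equiv 0$ $\bm_d$-a.e.\ on $\IR\times D$'' is the delicate step: the slice-wise averages defining $\R_D\tilde F_\otimes(t)$ and the full $(d+1)$-dimensional averages defining $\R_{\IR\times D}\tilde F$ are not the same limits, so relating them requires a density/continuity argument (agreement on $\C_c^\infty(\IR^{d+1})$ and boundedness of both operators into $\L^2(\IR\times D,\bm_d)\cong\L^2(\IR;\L^2(D,\m_{d-1}))$), not naive Fubini; \textbf{(c)} passing from $\tilde F\in\H_{\IR\times D}^{s+1/2}(\IR^{d+1})$ to $\R_D\E f=0$ requires a Jonsson--Wallin trace onto the codimension-\emph{two} set $\{0\}\times D\subset\IR^{d+1}$ plus a consistency argument for composed traces -- note $\{0\}\times D$ is null for $\bm_d$ on $\IR\times D$ and for Lebesgue measure on $\{0\}\times\IR^d$, so ``consistency'' cannot be read off pointwise but must again be run through approximation by smooth compactly supported functions that avoid $\IR\times D$. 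Your route would likely succeed if these lemmas were proved, but the paper's $\subseteq$ argument is considerably shorter and sidesteps all trace identifications on null slices.
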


\begin{proof}
We prove both continuous inclusions separately.

$\subseteq$ \,:
For brevity put $X:= (\L^2(\Omega), \H_D^{s+1/2}(\Omega))_{\vartheta s,2}$. Let $\E$ be the extension operator provided by Theorem~\ref{Thm: Extension operator for HDs}. By $(\vartheta s,2)$-real interpolation and the interpolation rules for Triebel-Lizorkin spaces \cite[Sec.\ 2.4.2, Thm.\ 1]{Triebel}, $\E$ is bounded from $X$ into
\begin{align}
\label{Interpolation with non-matching parameters: Eq1}
 \big(\L^2(\IR^d), \H_D^{s+1/2}(\IR^d)\big)_{\vartheta s,2} \subseteq \big(\L^2(\IR^d), \H^{s+1/2}(\IR^d)\big)_{\vartheta s,2} = \H^s(\IR^d).
\end{align}
To see that $\E$ in fact maps into $\H_D^s(\IR^d)$ if $D \neq \emptyset$ and $s > \frac{1}{2}$, first note that in this case $\vartheta s \in (\frac{1}{2}, 1)$. Hence, it is possible to find $\lambda \in (\frac{1}{2}, \vartheta s)$ and $\gamma \in (0,1)$ such that $\vartheta s = (1-\gamma) \lambda + \gamma$. The reiteration theorem for real interpolation \cite[Sec.\ 1.10.2]{Triebel} yields 
\begin{align*}
 \E(X)\subseteq \big(\L^2(\IR^d), \H_D^{s+1/2}(\IR^d) \big)_{\vartheta s,2} = \big(\big(\L^2(\IR^d), \H_D^{s+1/2}(\IR^d)\big)_{\lambda,2}, \H_D^{s+1/2}(\IR^d)\big)_{\gamma,2} =: \big(Y_0,Y_1\big)_{\gamma,2}.
\end{align*}
As in \eqref{Interpolation with non-matching parameters: Eq1} it follows that $Y_0$ is continuously included in $\H^{\lambda (s+1/2)}(\IR^d)$. Due to $\lambda (s+\frac{1}{2}) > \frac{1}{2}$ the restriction operator $\R_D$ from Proposition~\ref{Prop: Jonsson-Wallin trace theorem} is defined on both $Y_0$ and $Y_1$, mapping them into the respective Sobolev spaces on $D$. But, by definition, $Y_1$ is contained in the null space of $\R_D$. Since $(\gamma,2)$-real interpolation is exact of type $\gamma$, see \cite[Sec.\ 1.3.3]{Triebel} for details, $(Y_0,Y_1)_{\gamma,2}$ and hence $\E(X)$ is contained in the null space of $\R_D$ as well. Due to \eqref{Interpolation with non-matching parameters: Eq1} this implies $\E(X) \subseteq \H_D^s(\IR^d)$.

From the considerations above we conclude that if $s> \frac{1}{2}$ then each $f \in X$ belongs to $\H_D^s(\Omega)$ as the restriction of $\E f \in \H_D^s(\IR^d)$ and that, since $\E: X \to \H_D^s(\IR^d)$ is bounded, this inclusion is continuous. Likewise, if $s< \frac{1}{2}$ then $X \subseteq \H^s(\Omega)$ with continuous inclusion.

\medskip

$\supseteq $ \,: We concentrate on the case $s > \frac{1}{2}$. Upon replacing $\H_D^s(\Omega)$ by $\H^s(\Omega)$ the proof in the case $s < \frac{1}{2}$ is literally the same. The roadmap for the somewhat involved argument reads as follows:
\begin{align*}
\\[-5pt]
\begin{CD}
\H_{\IR \times D}^{s+1/2}(\IR^{d+1}) @>{\text{Lem. \ref{Lem: Identification of functions on tensor product}}}>> \L^2\big(\IR; \H_D^{s+1/2}(\IR^d)\big) \cap \H^{s+1/2}\big(\IR; \L^2(\IR^d)\big)\\
@A{\E_{\Omega \uparrow D}}AA @VV{\R_\Omega}V \\
\H^s(\Dcyl) @. \L^2\big(\IR; \H_D^{s+1/2}(\Omega)\big) \cap \H^{s+1/2}\big(\IR; \L^2(\Omega)\big) \\
@A{\text{Prop. \ref{Prop: Zero extension to Omega-Odot-D}}}AA @VV{\text{Thm.\ \ref{Thm: Grisvard trace theorem}}}V \\
\H_D^s(\Omega) @. \big(\L^2(\Omega), \H_D^{s+1/2}(\Omega)\big)_{\vartheta s,2}.
\end{CD}
\\[-5pt]
\end{align*}

To make this precise, first note that in view of Theorem~\ref{Thm: Grisvard trace theorem} and the bounded inverse theorem it suffices to construct for general $f \in \H_D^s(\Omega)$ a function $\f_\otimes$ such that
\begin{align}
\label{Interpolation with non-matching parameters: desired properties of F}
 \f_\otimes \in \L^2\big(\IR; \H_D^{s+1/2}(\Omega)\big) \cap \H^{s+1/2}\big(\IR; \L^2(\Omega)\big), \quad \f_\otimes(0) = f.
\end{align}
For the construction let $f_\uparrow \in \H^s(\Dcyl,\, \bm_d)$ be given by Proposition~\ref{Prop: Zero extension to Omega-Odot-D}. Apply Proposition~\ref{Prop: Jonsson-Wallin trace theorem} to the $d$-set $\Dcyl \subseteq \IR^{d+1}$ to obtain an extension $\g \in \H^{s+1/2}(\IR^{d+1})$ of $f_\uparrow$. In virtue of Lemma~\ref{Lem: Identification of functions on tensor product} this extension is related to the the Banach space valued function
\begin{align*}
 \g_\otimes \in \L^2\big(\IR; \H^{s+1/2}(\IR^d)\big) \cap \H^{s+1/2}\big(\IR; \L^2(\IR^d)\big).
\end{align*}
A closer inspection of $\g_\otimes$ making use of the exact definition of $f_\uparrow$ reveals the following.
\begin{enumerate}[(i)]
\itemsep3pt
  \item By definition of $f_\uparrow$ it holds $\g \in \H_{\IR \times D}^{s+1/2}(\IR^{d+1}) \subseteq \H_{\IR \times D}^1(\IR^{d+1})$. Note that this notation is meaningful for $\IR \times D$ is either empty or a $d$-set in $\IR^{d+1}$ thanks to Lemma~\ref{Lem: Tensoring d-sets}. Proposition~\ref{Prop: Smooth functions dense in HFs} provides a sequence $(\g_n)_n$ of smooth, compactly supported functions whose support avoids $\IR \times D$ and that approximates $\g$ in $\H^1(\IR^{d+1})$. Owing to Lemma~\ref{Lem: Identification of functions on tensor product} we can, after passing to a suitable subsequence, assume for almost all $t \in \IR$ that
  \begin{align*}
    \qquad \lim_{n\to \infty} \g_n (t, \argdot) = \lim_{n \to \infty} (\g_n)_\otimes(t) = \g_\otimes(t) \qquad (\text{in $\H^1(\IR^d)$}).
  \end{align*}
  Since $\g_n(t,\argdot) \in \C_D^\infty(\IR^d)$ holds for all $t \in \IR$ by construction, this entails that for a.e.\ $t \in \IR$ the function $\g_\otimes(t) \in \H^{s+1/2}(\IR^d)$ satisfies $\R_D (\g_\otimes(t)) = 0$, i.e.\ belongs to the closed subspace $\H_D^{s+1/2}(\IR^d)$. Here, $\R_D$ is the restriction operator to the $(d-1)$-set $D$, cf.\ Proposition~\ref{Prop: Jonsson-Wallin trace theorem}, and we have used its boundedness from $\H^1(\IR^d)$ onto $\L^2(D, \m_{d-1})$. Summing up, it follows $\g_\otimes \in \L^2\big(\IR; \H_D^{s+1/2}(\IR^d)\big)$.
 
  \item Lemma~\ref{Lem: Identification of functions on tensor product} in combination with the embedding \eqref{BUC embedding} reveals $\g_\otimes(0)$ as the $\L^2(\IR^d)$-limit of $(\g_n(0,\argdot))_n$. But as $\{0\} \times \Omega$ is a $d$-set in $\IR^{d+1}$, cf.\ Remark~\ref{Rem: Remark on geometric assumptions}, Proposition~\ref{Prop: Jonsson-Wallin trace theorem} provides a bounded restriction operator $\R_{\{0\} \times \Omega}: \H^1(\IR^{d+1}) \to \L^2(\{0\} \times \Omega,\, \bm_d)$ and it also follows
  \begin{align*}
    \qquad \lim_{n \to \infty} \g_n|_{\{0\} \times \Omega} = \lim_{n \to \infty} \R_{\{0\} \times \Omega} (\g_n) = \R_{\{0\} \times \Omega}(\g) = f_\uparrow|_{\{0\} \times \Omega}   \qquad (\text{in $\L^2(\{0\} \times \Omega,\, \bm_d)$}).
  \end{align*}
  Identifying the measure spaces $(\Omega,\, \abs{\argdot})$ and $(\{0\} \times \Omega,\, \bm_d)$ as in the proof of Proposition~\ref{Prop: Zero extension to Omega-Odot-D} we conclude from the previous observations that $\g_\otimes(0) = f$ holds a.e.\ on $\Omega$. 
\end{enumerate}
Altogether,
\begin{align*}
 \g_\otimes \in \L^2\big(\IR; \H_D^{s+1/2}(\IR^d)\big) \cap \H^{s+1/2}\big(\IR; \L^2(\IR^d)\big), \quad \g_\otimes(0)|_{\Omega} = f,
\end{align*}
so that \eqref{Interpolation with non-matching parameters: desired properties of F} holds for the choice $\f_\otimes(t) := \g_\otimes(t)|_{\Omega}$, $t \in \IR$.
\end{proof}

Now, the proof of the second equality in (2) can easily be completed. In the following all function spaces will be on $\Omega$, so for brevity we shall write $\L^2$ instead of $\L^2(\Omega)$ and so on. We have to show
\begin{align*}
 \big(\L^2, \H_D^1 \big)_{s,2} = \H_D^s \quad \text{and} \quad \big(\L^2, \H_D^1 \big)_{t,2} = \H^t \qquad(0 < t < \tfrac{1}{2} < s <1).
\end{align*}
Given $s \in (\frac{1}{2},1)$ set $\vartheta: = \frac{2}{2s+1}$. Observe that $\vartheta s < \vartheta < 1$ so that there exists a $\lambda  \in (0,1)$ such that $\vartheta = (1-\lambda) \vartheta s + \lambda$. Using in sequence the reiteration theorem for real interpolation, cf.\ \cite[Sec.\ ~1.10.2]{Triebel}, Proposition~\ref{Prop: Interpolation with non-matching parameters}, and (1) in Theorem~\ref{Thm: Main interpolation theorem} leads to
\begin{align*}
 \big(\L^2, \H_D^{s+1/2}\big)_{\vartheta,2} 
= \big( \big(\L^2, \H_D^{s+1/2}\big)_{\vartheta s,2},  \H_D^{s+1/2} \big)_{\lambda,2}
= \big(\H_D^s, \H_D^{s+1/2}\big)_{\lambda,2}
= \H_D^1.
\end{align*}
Reapplication of the reiteration theorem and Proposition~\ref{Prop: Interpolation with non-matching parameters} yield the desired equality
\begin{align}
\label{L2-HD1 interpolation large s}
\big(\L^2, \H_D^1 \big)_{s,2} = \big(\L^2, \big(\L^2, \H_D^{s + 1/2} \big)_{\vartheta,2} \big)_{s,2} =  \big(\L^2, \H_D^{s+1/2}\big)_{\vartheta s,2} = \H_D^s.
\end{align}
Likewise for $t \in (0,\frac{1}{2})$ set $\vartheta := \frac{2}{2t+1}$ and employ in sequence the reiteration theorem, \eqref{L2-HD1 interpolation large s} for the choice $s=t+\frac{1}{2}$, and Proposition~\ref{Prop: Interpolation with non-matching parameters} to find
\begin{align*}
 \big(\L^2, \H_D^1 \big)_{t,2} = \big(\L^2, \big(\L^2, \H_D^1 \big)_{t+1/2,2}\big)_{\vartheta t,2} = \big(\L^2, \H_D^{t+1/2}\big)_{\vartheta t,2} = \H^t
\end{align*}
and the proof is complete. \hfill $\square$

\subsection*{A remark on the critical case $\theta = \frac{1}{2}$} As the trace operator $\R_D$ from Proposition~\ref{Prop: Jonsson-Wallin trace theorem} is only defined on $\H^\theta(\IR^d)$ if $\theta> 1/2$, there is no analogously defined space $\H_D^{1/2}(\Omega)$. Still, of course, there are $(\frac{1}{2},2)$-real and $\frac{1}{2}$-complex interpolation spaces between $\L^2(\Omega)$ and $\H_D^1(\Omega)$ and the question arises if these spaces know about the trace zero condition on $D$ in any reasonable sense. The following proposition shows that the respective interpolation spaces can be characterized by a fractional Hardy type inequality.

\begin{proposition}
\label{Prop: Interpolation in the critical case}
The interpolation spaces $(\L^2(\Omega), \H_D^1(\Omega))_{1/2, 2}$ and  $[\L^2(\Omega), \H_D^1(\Omega)]_{1/2}$ both coincide with $\H^{1/2}(\Omega) \cap \L^2(\Omega, \d x / \dist_D(x))$, i.e.\ the space of all $f \in \H^{1/2}(\Omega)$ such that
\begin{align*}
 \int_\Omega \frac{\abs{f(x)}^2}{\dist_D(x)} \; \d x  < \infty,
\end{align*}
equipped with its natural norm.
\end{proposition}

\begin{proof}
For brevity put $X:= (\L^2(\Omega), \H_D^1(\Omega))_{1/2,2}$ and  $Y:= \H^{1/2}(\Omega) \cap \L^2(\Omega, \d x / \dist_D(x))$. First, recall from the proof of part (1) of Theorem~\ref{Thm: Main interpolation theorem} that $X = [\L^2(\Omega), \H_D^1(\Omega)]_{1/2}$.

In order to prove $X = Y$, first let $f \in Y$. Then $f_\uparrow$ defined in Proposition~\ref{Prop: Zero extension to Omega-Odot-D} belongs to $\H^{1/2}(\Dcyl,\, \bm_d)$. Indeed, in the proof of Proposition~\ref{Prop: Zero extension to Omega-Odot-D} the restriction $s>\frac{1}{2}$ has only been used in the very last estimate in order to guarantee that $\int_\Omega \abs{f}^2 \dist_D^{-2s}$ is finite. For $f \in Y$ and $s = \frac{1}{2}$ this, however, follows by definition of $Y$. Therefore $f \in X$ follows literally as in part `$\supseteq$' of the proof of Proposition~\ref{Prop: Interpolation with non-matching parameters}.

The next step is to prove $X \subseteq \H^{1/2}(\Omega)$ with continuous inclusion. To this end, let $\E$ be the extension operator provided by Theorem~\ref{Thm: Extension operator for HDs}. Using the classical interpolation result $(\L^2(\IR^d), \H^1(\IR^d))_{1/2,2} = \H^{1/2}(\IR^d)$, see e.g.\ \cite[Sec.~2.4.2, Thm.~1]{Triebel}, it follows that $\E$ maps $X$ boundedly into $\H^{1/2}(\IR^d)$. Since the restriction from $\H^{1/2}(\IR^d)$ onto $\H^{1/2}(\Omega)$ is bounded, this yields the claim. 

Finally, it remains to prove $X \subseteq \L^2(\Omega, \d x / \dist_D(x))$ with continuous inclusion. Here, note that in virtue of the Hardy type inequality
\begin{align*}
 \int_\Omega \frac{\abs{f(x)}^2}{\dist_D(x)^2} \; \d x \lesssim \|f\|_{\H_D^1(\Omega)} \qquad (f \in \H_D^1(\Omega))
\end{align*}
from \cite[Thm.~6.1]{ABHR} there is a continuous inclusion $\H_D^1(\Omega) \subseteq \L^2(\Omega, \d x / \dist_D(x)^2)$. Hence, the claim follows by $(\frac{1}{2},2)$-real interpolation of $\L^2$ spaces with a change of measure \cite[Thm.~5.4.1]{Bergh-Loefstroem}.
\end{proof}

\begin{remark}
\label{Rem: No knowledge on critical case}
Unlike in the case $s \in (0,\frac{1}{2})$, the fractional Hardy inequality occurring above encapsulates some boundary behavior on $D$ and thus is not satisfied by every $f \in \H^{1/2}(\Omega)$, cf.\ Proposition~\ref{Prop: Hardy for vanishing boundary trace I}. For example, let $\Omega := B(0,1)$, $D := \partial B(0,1)$, and $f :\equiv 1$. Then of course $f \in \H^{1/2}(\Omega)$ but $\int_\Omega \abs{f(x)}^2\dist_D(x)^{-1} \; \d x \simeq \int_0^1 r^{d-1} (1-r)^{-1} \; \d r = \infty$. This also shows that the upper bound for the range of exponents in Proposition~\ref{Prop: Hardy for vanishing boundary trace I} is sharp.
\end{remark}

\section{Proof of the Main Result}
\label{Sec: Proof of the main result}

\noindent We now turn to the proof of our main result, Theorem~\ref{Thm: Main Result}. Again $\Delta_\V$ denotes the weak Laplacian with form domain $\V$, cf.\ Subsection~\ref{Subsec: The Elliptic Operator}. Then $1-\Delta_\V$ is an invertible, maximal accretive self-adjoint operator on $\L^2(\Omega)$ with associated sesquilinear form
\begin{align*}
\j: \V \times \V \to \IC,  \quad \j(u,v) = \int_\Omega u \cdot \cl{v}  + \int_\Omega \nabla u \cdot \nabla \cl{v}.
\end{align*}
Recall by Corollary~\ref{Cor: Form domain equals HD1} and the square root property for self-adjoint operators \cite[Thm.\ ~VI.2.23]{Kato} that
\begin{align*}
 \H_D^1(\Omega) = \V = \dom((1-\Delta_\V)^{1/2})
\end{align*}
holds up to equivalent norms. Starting from this we obtain
\begin{align}
\label{Eq: Fractional power domains under square root}
 \dom((-\Delta_\V)^\alpha) = \dom((1-\Delta_\V)^\alpha) = \big[\L^2(\Omega), \H_D^1(\Omega)\big]_{2 \alpha} 
= \begin{cases}
   \H_D^{2 \alpha}(\Omega), & \text{if $\alpha \in (\tfrac{1}{4}, \tfrac{1}{2}]$,} \\
   \H^{2 \alpha}(\Omega), & \text{if $\alpha \in [0,\tfrac{1}{4})$}
  \end{cases}
\end{align}
thanks to Theorem~\ref{Thm: Main interpolation theorem} and the following classical result for maximal accretive operators.

\begin{proposition}[{\cite[Cor.\ 4.30]{Lunardi-Interpolation}}]
\label{Prop: BIP Lemma}
If $B$ is an invertible, maximal accretive operator on a Hilbert space, then for all $\alpha, \beta \geq 0$ and for all $\theta \in [0,1]$ it holds
\begin{align*}
 \big[\dom(B^\alpha), \dom(B^\beta)\big]_\theta = \dom(B^{(1-\theta)\alpha + \theta \beta}).
\end{align*}
\end{proposition}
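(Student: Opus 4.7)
The plan is to derive the identity from the \emph{Bounded Imaginary Powers} (BIP) property of $B$ followed by a classical three-lines argument on the strip $\overline S := \{z \in \IC : 0 \leq \Re z \leq 1\}$. The statement is the Hilbert-space specialization of a general theorem for sectorial operators with BIP going back to Seeley, Triebel, and Yagi, so the core task is to verify BIP for $B$ and then construct two explicit holomorphic functions on $\overline S$.

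First I would establish BIP for $B$. Since $B$ is maximal accretive on the Hilbert space $H$, the Lumer-Phillips theorem shows that $-B$ generates a strongly continuous contraction semigroup; hence $B$ is sectorial with sectoriality angle $\omega(B) \leq \pi/2$, and invertibility places $0$ in its resolvent set. At this point I would invoke McIntosh's theorem, which on a Hilbert space upgrades every such sectorial operator to one admitting a bounded $\H^\infty$-functional calculus on every sector of opening angle larger than $\omega(B)$. Specializing this calculus to the functions $z \mapsto z^{it}$ yields the uniform BIP estimate $\|B^{it}\|_{\Lop(H)} \lesssim_\omega e^{\omega|t|}$ for every $\omega > \pi/2$ and every $t \in \IR$.

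With BIP at hand, I would prove the two inclusions separately. Without loss assume $\alpha < \beta$ and put $\gamma := (1-\theta)\alpha + \theta \beta$. For $\dom(B^\gamma) \subseteq [\dom(B^\alpha), \dom(B^\beta)]_\theta$, given $u \in \dom(B^\gamma)$ I would exhibit the explicit realizer
\begin{align*}
 F_\delta(z) := e^{\delta (z - \theta)^2} B^{(\theta - z)(\beta - \alpha)} u, \qquad z \in \overline S,
\end{align*}
with small $\delta > 0$. BIP guarantees that $F_\delta$ is $H$-valued, continuous on $\overline S$, holomorphic on its interior, satisfies $F_\delta(\theta) = u$, and takes values in $\dom(B^\alpha)$ resp.\ $\dom(B^\beta)$ on the two boundary lines; the Gaussian factor absorbs the sub-exponential BIP growth and produces boundary norm bounds by a multiple of $\|B^\gamma u\|_H$. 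For the reverse inclusion, starting from an analytic realizer $f$ for the interpolation norm of $u$, I would apply the three-lines theorem to
\begin{align*}
 G_\delta(z) := e^{\delta (z - \theta)^2} B^{(1-z)\alpha + z \beta} f(z),
\end{align*}
reading off $\|B^\gamma u\|_H = \|G_\delta(\theta)\|_H$ from the boundary norms.

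The main obstacle is BIP itself: without a sub-exponential bound on $\|B^{it}\|$, neither $F_\delta$ nor $G_\delta$ would live in the right target space and the three-lines argument would be inapplicable. On a general Banach space, BIP is an extra hypothesis; in our Hilbert-space framework, however, McIntosh's theorem delivers it automatically, so after securing that step the remaining work reduces to bookkeeping of exponents.
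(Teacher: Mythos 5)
The paper does not prove this proposition --- it is quoted verbatim from Lunardi \cite[Cor.\ 4.30]{Lunardi-Interpolation} --- so there is no in-paper proof to compare against; I can only assess the correctness of your sketch. Your overall strategy (establish bounded imaginary powers, then run the Seeley--Triebel--Yagi three-lines argument with a Gaussian damping factor) is indeed the standard route and is the one underlying Lunardi's proof. The exponent bookkeeping you indicate for $F_\delta$ and $G_\delta$ checks out: $\alpha+\theta(\beta-\alpha) = (1-\theta)\alpha+\theta\beta = \beta-(1-\theta)(\beta-\alpha)$, and any $\delta>0$ lets $e^{-\delta t^2}$ absorb the sub-exponential BIP growth.

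However, there is a genuine gap in the way you justify BIP. McIntosh's theorem is an \emph{equivalence} theorem: for an injective sectorial operator on a Hilbert space, having a bounded $\H^\infty$-calculus is equivalent to satisfying quadratic estimates, which is equivalent to having BIP. It does \emph{not} assert that every sectorial operator on a Hilbert space enjoys these properties; McIntosh and Yagi constructed explicit sectorial operators on Hilbert space with no bounded $\H^\infty$-calculus. So the step ``sectorial of angle $\leq \pi/2$ on Hilbert space $\Rightarrow$ bounded $\H^\infty$-calculus'' is false as stated. What rescues the argument is that you are not dealing with a generic sectorial operator of angle $\pi/2$ but with a \emph{maximal accretive} one, and for these a separate classical theorem (going back to Kato, ``Fractional powers of dissipative operators'', J.\ Math.\ Soc.\ Japan 13 (1961), and reproduced e.g.\ in Haase's book) gives the quantitative bound $\|B^{it}\|_{\Lop(H)} \leq e^{\pi |t|/2}$ directly. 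Alternatively one may argue via the Sz.-Nagy unitary dilation of the contraction semigroup $e^{-tB}$, or via the Crouzeix--Delyon numerical-range functional calculus; but in any case the accretivity must be used in an essential way, not merely to place the sectoriality angle at $\pi/2$.

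One smaller technical point in the $\subseteq$ direction: an arbitrary realizer $f$ of the interpolation norm only takes values in $\dom(B^\alpha)+\dom(B^\beta)=\dom(B^{\min(\alpha,\beta)})$ on the open strip, so $B^{(1-z)\alpha+z\beta} f(z)$ need not be defined there. The usual fix is to restrict first to the dense subclass of realizers $f(z)=\sum_k e^{a_k z^2 + b_k z}\, x_k$ with $x_k \in \dom(B^{\max(\alpha,\beta)})$, for which $G_\delta$ is well defined and holomorphic, and then pass to the limit. You call this ``bookkeeping,'' which is fair, but it is worth recording since it is precisely where the density of $\dom(B^{\max(\alpha,\beta)})$ in the interpolation space enters. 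With the BIP justification repaired and this density step made explicit, the proof is correct.
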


In view of \eqref{Eq: Fractional power domains under square root} it remains to show that there exists an $\eps \in (0, \frac{1}{4})$ such that
\begin{align}
\label{Eq: Last goal}
 \dom((1-\Delta_\V)^\alpha) = \H_D^{2 \alpha}(\Omega) \qquad (\alpha \in (\tfrac{1}{2}, \tfrac{1}{2} + \eps)).
\end{align}
Here we used again that the domains of the respective fractional powers of $-\Delta_\V$ and $1-\Delta_\V$ coincide. 

We will establish \eqref{Eq: Last goal} by means of an interpolation argument going back to Pryde \cite{Pryde-MixedBoundary}, see also \cite{AKM}. Throughout, $X^*$ denotes the \emph{anti dual space} of a Banach space $X$, i.e.\ the space of all bounded conjugate linear functionals on $X$. Occasionally, we apply results on dual spaces also in the anti dual setting. These arguments can all be justified by the simple observation that $x^*$ is an element of $X^*$, if and only if its conjugate $\cl{x^*}$ is in the dual of $X$. 

All function spaces occurring in the following will be on $\Omega$, so for brevity we shall again write $\L^2$ instead of $\L^2(\Omega)$ and so on. We begin with the following interpolation estimates for $\j$.

\begin{lemma}
\label{Lem: First Interpolation estimates for j}
If $\alpha \in [\frac{1}{2},\frac{3}{4})$ then
\begin{align*}
 \abs{\j(u,v)} \lesssim \|u\|_{\dom((1-\Delta)^\alpha)} \|v\|_{\H_D^{2-2\alpha}} \qquad (u \in \dom((1-\Delta_\V)^\alpha),\, v \in \V).
\end{align*}
\end{lemma}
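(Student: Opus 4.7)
The idea is to set up a complex interpolation argument between two endpoint bounds for the form $\j$.

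First I would record the two endpoint estimates. For $u,v \in \V = \H_D^1(\Omega)$, Cauchy--Schwarz applied to $\j(u,v) = \int_\Omega u\cl v + \int_\Omega \nabla u\cdot \nabla\cl v$ yields $|\j(u,v)| \lesssim \|u\|_{\H_D^1}\|v\|_{\H_D^1}$, which corresponds to $\alpha=\tfrac{1}{2}$. At the other extreme, for $u \in \dom(1-\Delta)$ the form identity $\j(u,v) = \langle (1-\Delta)u,v\rangle_{\L^2}$ for $v \in \V$ shows that $v \mapsto \j(u,v)$ extends to a bounded conjugate linear functional on $\L^2(\Omega)$ of norm $\|(1-\Delta)u\|_{\L^2}$.

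Next I would package this into an interpolation statement. Define the linear map $T\colon u \mapsto \j(u,\argdot)$. The two endpoint bounds say that $T$ acts boundedly $\H_D^1 \to (\H_D^1)^*$ and $\dom(1-\Delta) \to (\L^2)^*$. Since $\H_D^1$ embeds densely into $\L^2$, dualization realizes $(\L^2)^*$ as a continuously embedded subspace of $(\H_D^1)^*$, so the target is a genuine interpolation couple. For $\theta \in [0,\tfrac{1}{2})$, complex interpolation then yields
\begin{align*}
T \colon \bigl[\H_D^1(\Omega),\dom(1-\Delta)\bigr]_\theta \to \bigl[(\H_D^1(\Omega))^*, (\L^2(\Omega))^*\bigr]_\theta
\end{align*}
boundedly.

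The domain side is identified via Proposition~\ref{Prop: BIP Lemma}, applied with $B=1-\Delta$ and exponents $\tfrac{1}{2},1$, as $\dom((1-\Delta)^{(1+\theta)/2})$. For the target side I would invoke the duality theorem for complex interpolation (applicable since both Hilbert spaces are reflexive and $\H_D^1$ is dense in $\L^2$, cf.\ \cite[Thm.\ 4.5.1]{Bergh-Loefstroem}) together with the second part of Theorem~\ref{Thm: Main interpolation theorem} to obtain
\begin{align*}
\bigl[(\H_D^1)^*, (\L^2)^*\bigr]_\theta = \bigl(\bigl[\L^2, \H_D^1\bigr]_{1-\theta}\bigr)^* = \bigl(\H_D^{1-\theta}(\Omega)\bigr)^*,
\end{align*}
where the last equality crucially uses $1-\theta > \tfrac{1}{2}$. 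Setting $\alpha := (1+\theta)/2 \in [\tfrac{1}{2},\tfrac{3}{4})$, one has $2-2\alpha = 1-\theta$ and the bound $\|Tu\|_{(\H_D^{2-2\alpha})^*} \lesssim \|u\|_{\dom((1-\Delta)^\alpha)}$ pairs with $v \in \V \hookrightarrow \H_D^{2-2\alpha}$ to give the asserted inequality. The only delicate point I anticipate is verifying the duality identification; otherwise the argument is standard interpolation bookkeeping.
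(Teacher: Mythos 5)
Your proposal is correct, but it takes a genuinely different route from the paper. The paper's proof is a short direct computation: reduce to $u \in \dom(1-\Delta)$ by a core argument, write $\j(u,v) = \langle (1-\Delta)u,v\rangle_{\L^2}$, use self-adjointness to shift a factor across the inner product and obtain $\j(u,v) = \langle (1-\Delta)^\alpha u, (1-\Delta)^{1-\alpha}v\rangle_{\L^2}$, then apply Cauchy--Schwarz and invoke the already-established identification $\dom((1-\Delta)^{1-\alpha}) = \H_D^{2-2\alpha}$ from \eqref{Eq: Fractional power domains under square root}. Your route instead treats the map $T\colon u \mapsto \j(u,\argdot)$ as an operator between interpolation couples, nails the two endpoints $\H_D^1 \to (\H_D^1)^*$ and $\dom(1-\Delta) \to (\L^2)^*$, and interpolates; the domain side is handled by Proposition~\ref{Prop: BIP Lemma} and the target side by the duality theorem for complex interpolation combined with Theorem~\ref{Thm: Main interpolation theorem}(2). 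Both arguments ultimately rest on the same two facts (Theorem~\ref{Thm: Main interpolation theorem}(2) and Proposition~\ref{Prop: BIP Lemma}), but the paper avoids the duality and reflexivity machinery entirely by exploiting self-adjointness to factorize inside the pairing, which is shorter. What your version buys is stylistic coherence: it proves Lemma~\ref{Lem: First Interpolation estimates for j} by the same dualize-and-interpolate pattern that the paper is forced to use for the companion Lemma~\ref{Lem: Second Interpolation estimates for j}, where no self-adjoint factorization is available. One small point worth making explicit if you write this up: when verifying that $T$ is a single well-defined operator on $X_0+X_1 = \H_D^1$, note that for $u\in\dom(1-\Delta)$ the $(\L^2)^*$-valued and $(\H_D^1)^*$-valued definitions of $Tu$ agree after restricting the former to $\H_D^1$ -- this compatibility is what justifies applying the interpolation functor.
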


\begin{proof}
Since $\dom(1-\Delta_\V)$ is a core for $\dom((1-\Delta_\V)^\alpha)$ and since the latter is continuously included into $\dom((1-\Delta_\V)^{1/2}) = \V$ it suffices, by approximation, to consider the special case $u \in \dom(1-\Delta_\V)$. As with $1-\Delta_\V$ also its fractional powers are self-adjoint, cf.\ \cite[Prop.\ 2.6.3]{Haase}, it follows
\begin{align*}
 \big|\j(u,v) \big| 
&= \big|\big \langle (1-\Delta_\V)u, v \big \rangle_{\L^2}\big| 
= \big|\big \langle (1-\Delta_\V)^\alpha u, (1-\Delta_\V)^{1-\alpha}v \big \rangle_{\L^2}\big| \\
&\leq \|u \|_{\dom((1-\Delta_\V)^\alpha)} \|v \|_{\dom((1-\Delta_\V)^{1-\alpha})}
\end{align*}
for all $v \in \V$. This already yields the claim since $\dom((1-\Delta_\V)^{1-\alpha}) = \H_D^{2-2\alpha}$ holds up to equivalent norms thanks to \eqref{Eq: Fractional power domains under square root}.
\end{proof}

\begin{lemma}
\label{Lem: Second Interpolation estimates for j}
If $\alpha \in (\frac{1}{4},\frac{1}{2}]$ then
\begin{align*}
 \abs{\j(u,v)} \lesssim \|u\|_{\H_D^{2\alpha}} \|v\|_{\H_D^{2-2 \alpha}} \qquad (u \in \V,\, v \in \H_D^{2-2 \alpha}).
\end{align*}
\end{lemma}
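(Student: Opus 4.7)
My plan is to imitate the strategy of Lemma~\ref{Lem: First Interpolation estimates for j} using self-adjointness of the fractional powers of $1-\Delta$, but now employing \eqref{Eq: Fractional power domains under square root} to control the $u$-factor rather than the $v$-factor. Starting from the form representation $\j(u,v) = \langle (1-\Delta)^{1/2}u, (1-\Delta)^{1/2}v\rangle_{\L^2}$ valid on $\V \times \V$, I use the factorization $(1-\Delta)^{1/2}v = (1-\Delta)^{\alpha - 1/2}(1-\Delta)^{1-\alpha} v$ (valid when $v \in \dom((1-\Delta)^{1-\alpha})$) together with the self-adjointness of the bounded operator $(1-\Delta)^{\alpha - 1/2}$ (bounded on $\L^2(\Omega)$ since $\alpha - \tfrac{1}{2}\in[-\tfrac{1}{4}, 0]$ and $1-\Delta$ is invertible) to transfer this factor to the $u$-side. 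This produces
\begin{align*}
 \j(u,v) = \langle (1-\Delta)^\alpha u, (1-\Delta)^{1-\alpha} v\rangle_{\L^2(\Omega)},
\end{align*}
and Cauchy--Schwarz then yields $\abs{\j(u,v)} \leq \|(1-\Delta)^\alpha u\|_{\L^2}\|(1-\Delta)^{1-\alpha} v\|_{\L^2}$. Since $\alpha \in (\tfrac{1}{4},\tfrac{1}{2}]$, the first factor is equivalent to $\|u\|_{\H_D^{2\alpha}}$ by \eqref{Eq: Fractional power domains under square root}.

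The crucial remaining task is to bound $\|(1-\Delta)^{1-\alpha} v\|_{\L^2}$ by $\|v\|_{\H_D^{2-2\alpha}}$ for $v \in \H_D^{2-2\alpha}(\Omega)$, i.e., to establish the continuous embedding $\H_D^{2-2\alpha}(\Omega) \hookrightarrow \dom((1-\Delta)^{1-\alpha})$ in the regime $1-\alpha \in [\tfrac{1}{2},\tfrac{3}{4})$, which lies beyond the range covered directly by \eqref{Eq: Fractional power domains under square root}. When $\alpha = \tfrac{1}{2}$ this is just $\V = \dom((1-\Delta)^{1/2})$. For $\alpha \in (\tfrac{1}{4},\tfrac{1}{2})$ my plan is to factor $(1-\Delta)^{1-\alpha} = (1-\Delta)^{1/2-\alpha}\,(1-\Delta)^{1/2}$: by \eqref{Eq: Fractional power domains under square root} the operator $(1-\Delta)^{1/2-\alpha}$ is bounded from $\H^{1-2\alpha}(\Omega) = \dom((1-\Delta)^{1/2-\alpha})$ (exponent in $[0,\tfrac{1}{4})$) into $\L^2(\Omega)$, so it suffices to show that $(1-\Delta)^{1/2}$ maps $\H_D^{2-2\alpha}(\Omega)$ continuously into $\H^{1-2\alpha}(\Omega)$, a one-derivative-loss regularity statement for the form's square root.

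A final density argument based on Proposition~\ref{Prop: Smooth functions dense in HFs} (density of $\C_D^\infty(\Omega)$ in $\H_D^{2-2\alpha}(\Omega)$) together with continuity of $\j$ on $\V \times \V$ extends the estimate from a dense subset of $\dom((1-\Delta)^{1-\alpha})$ to arbitrary $v \in \H_D^{2-2\alpha}(\Omega)$. The principal obstacle is clearly the regularity statement $(1-\Delta)^{1/2}\colon \H_D^{2-2\alpha}(\Omega) \to \H^{1-2\alpha}(\Omega)$: it must be secured without circularly invoking Theorem~\ref{Thm: Main Result}, for instance via interpolation between the isometry $(1-\Delta)^{1/2}\colon \V \to \L^2$ and a suitable higher-order endpoint, or by moment inequalities for the sectorial operator $1-\Delta$ combined with \eqref{Eq: Fractional power domains under square root}.
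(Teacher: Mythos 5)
Your strategy of transferring the self-adjoint factorization $\j(u,v) = \langle (1-\Delta)^\alpha u, (1-\Delta)^{1-\alpha}v\rangle_{\L^2}$ from Lemma~\ref{Lem: First Interpolation estimates for j} to the regime $\alpha\le\tfrac12$ does not close, and the gap you flag as the ``principal obstacle'' is genuinely fatal, not a detail to be filled in. For $\alpha\in(\tfrac14,\tfrac12)$ you need the continuous embedding $\H_D^{2-2\alpha}(\Omega)\hookrightarrow\dom\big((1-\Delta)^{1-\alpha}\big)$ with $1-\alpha\in(\tfrac12,\tfrac34)$. But identifying $\dom\big((1-\Delta)^{\beta}\big)$ with $\H_D^{2\beta}(\Omega)$ for $\beta$ strictly above $\tfrac12$ is exactly the content of \eqref{Eq: Last goal}, which is what the Sneiberg argument built on Lemmas~\ref{Lem: First Interpolation estimates for j} and \ref{Lem: Second Interpolation estimates for j} is supposed to \emph{produce}. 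Invoking even one inclusion of it here is circular.

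Your proposed remedies do not rescue this. Interpolating the isomorphism $(1-\Delta)^{1/2}\colon\H_D^1\to\L^2$ against a ``higher-order endpoint'' requires such an endpoint, and the only natural candidate, $(1-\Delta)^{1/2}\colon\dom(1-\Delta)\to\H_D^1$ together with $\dom(1-\Delta)\subseteq\H^2(\Omega)$, is precisely what can fail in this setting (the paper recalls Shamir's example: optimal Sobolev regularity for $\dom(-\Delta)$ can fail even on smooth domains). Moment inequalities for $1-\Delta$ give $\dom\big((1-\Delta)^{1-\alpha}\big)\subseteq\dom\big((1-\Delta)^{1/2}\big)=\H_D^1$, which points in the wrong direction; they cannot produce fractional Sobolev regularity above order $1$ from what is known before the Sneiberg step.

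The paper avoids the problem by never touching fractional powers with exponent above $\tfrac12$ in the proof of this lemma. It works with the form itself: $\j(u,v)=\int u\bar v+\sum_j\int\partial_j u\,\overline{\partial_j v}$, interprets $\partial_j u$ for $u\in\L^2$ as a functional on $\H^1_{\bd\Omega}$ (via density of $\C_c^\infty(\Omega)$), and uses Theorem~\ref{Thm: Main interpolation theorem} together with duality for complex interpolation to deduce that $\partial_j$ is bounded $\H_D^{2\alpha}\to(\H^{1-2\alpha})^*$. Pairing with the obvious boundedness $\partial_j\colon\H_D^{2-2\alpha}\to\H^{1-2\alpha}$ gives the estimate directly, with no appeal to regularity of $1-\Delta$ beyond \eqref{Eq: Fractional power domains under square root}. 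You should redirect your proof along these lines.
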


\begin{proof}
Recall from Remark~\ref{Rem: Remark on geometric assumptions} that $\bd \Omega$ is a $(d-1)$-set. Hence, if the pair $(\Omega, D)$ satisfies Assumption~\ref{Ass: General geometric assumption on Omega} then so does $(\Omega, \bd \Omega)$. Therefore, Theorem~\ref{Thm: Main interpolation theorem} combined with a duality principle for complex interpolation \cite[Cor.\ 4.5.2]{Bergh-Loefstroem} yields the interpolation identities
\begin{align}
\label{Eq: Second Interpolation estimates for j}
 \big[\L^2, \H_D^1\big]_{2\alpha} = \H_D^{2\alpha} \quad \text{and} \quad \big[(\L^2)^*,(\H_{\bd \Omega}^1)^*\big]_{1-2\alpha} = \big[\L^2,\H_{\bd \Omega}^1\big]_{1-2\alpha}^* = (\H^{1-2 \alpha})^*.
\end{align}

Let $1 \leq j \leq d$. By Proposition~\ref{Prop: Smooth functions dense in HFs} the test function space $\C_c^\infty(\Omega)$ is dense in $\H_{\bd \Omega}^1$. Given $f \in \L^2$, the distributional derivative $\partial_j f$ can therefore be canonically regarded as an element of $(\H_{\bd \Omega}^1)^*$. In virtue of this identification 
\begin{align*}
 \partial_j: \big[\L^2, \H_D^1\big]_{2\alpha} 
\to  \big[(\H_{\bd \Omega}^1)^*, (\L^2)^*\big]_{2\alpha} = \big[(\L^2)^*,(\H_{\bd \Omega}^1)^*\big]_{1-2\alpha}
\end{align*}
is bounded. Taking \eqref{Eq: Second Interpolation estimates for j} into account we conclude that $\partial_j$ maps $\H_D^{2\alpha}$ boundedly into $(\H^{1-2 \alpha })^*$.

To establish the actual claim, simply note that $\partial_j$ also maps $\H_D^{2-2 \alpha}$ boundedly into $\H^{1-2\alpha}$, where this time distributional derivatives are identified with $\L^2$ functions rather than with functionals, and conclude for $u \in \V$ and $v \in \H_D^{2-2 \alpha}$ the desired estimate
\begin{align*}
 \abs{\j(u,v)} \leq \|u\|_{\L^2} \|v\|_{\L^2} + \sum_{j = 1}^d \|\partial_j u\|_{(\H^{1-2 \alpha})^*} \|\partial_j v\|_{\H^{1-2\alpha}} \lesssim \|u\|_{\H_D^{ 2\alpha}} \|v\|_{\H_D^{2-2 \alpha}}. & \qedhere
\end{align*}
\end{proof}

Our main result is now a surprisingly simple consequence of the interpolation theory established in Section~\ref{Sec: Interpolation for HDs} and the following stability result for complex interpolation originally due to Sneiberg ~\cite{Sneiberg-Original}, see also \cite[Thm.\ 2.7]{Sneiberg-KaltonMitrea}.

\begin{proposition}
\label{Prop: Sneiberg}
Let $(X_0, X_1)$ and $(Y_0,Y_1)$ be interpolation couples and let $T: X_0 + X_1 \to Y_0 + Y_1$ be a linear operator that for $j=0,1$ restricts to a bounded operator from $X_j$ into $Y_j$. Then
\begin{align*}
 \big \{\theta \in (0,1) \big| \text{ $T: \big[X_0, X_1\big]_\theta \to \big[Y_0, Y_1\big]_\theta$ is an isomorphism} \big \}
\end{align*}
is an open subset of $(0,1)$.
\end{proposition}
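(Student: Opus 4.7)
The plan is to prove Sneiberg's stability theorem via the analytic-function characterization of complex interpolation and an iterative perturbation argument. Recall that $[X_0,X_1]_\theta = \{F(\theta) : F \in \mathcal{F}(X_0,X_1)\}$, where $\mathcal{F}(X_0,X_1)$ is Calder\'on's strip space of functions $F: \cl{S} \to X_0+X_1$ which are bounded and continuous on the closed strip, holomorphic on $S := \{0 < \Re z < 1\}$, and whose boundary traces $t \mapsto F(j+\i t)$ are bounded continuous maps $\IR \to X_j$ for $j=0,1$, normed by $\|F\|_{\mathcal{F}} := \max_{j=0,1} \sup_t \|F(j+\i t)\|_{X_j}$, with $\|f\|_{[X_0,X_1]_\theta} = \inf\{\|F\|_{\mathcal{F}} : F(\theta) = f\}$. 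Writing $M := \max_{j=0,1}\|T\|_{X_j \to Y_j}$, the pointwise action $F \mapsto TF$ is bounded $\mathcal{F}(X_0,X_1) \to \mathcal{F}(Y_0,Y_1)$ with norm $\leq M$. Fix $\theta_0$ where $T_{\theta_0}$ is an isomorphism, write $B = T_{\theta_0}^{-1}$ and $N = \|B\|$.

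The key analytic input will be a Schwarz-type estimate on $S$. Let $\varphi_{\theta_0}: S \to \mathbb{D}$ be a conformal map with $\varphi_{\theta_0}(\theta_0) = 0$; since it extends continuously to $\bd S$ with $|\varphi_{\theta_0}| \equiv 1$ there, division by $\varphi_{\theta_0}$ preserves $\|\cdot\|_{\mathcal{F}}$ for every $H \in \mathcal{F}(Y_0,Y_1)$ vanishing at $\theta_0$, which gives
\[
 \|H(\theta)\|_{[Y_0,Y_1]_\theta} \leq |\varphi_{\theta_0}(\theta)| \cdot \|H\|_{\mathcal{F}(Y_0,Y_1)} \qquad (\theta \in (0,1)),
\]
with $|\varphi_{\theta_0}(\theta)| \to 0$ as $\theta \to \theta_0$. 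An analogous bound holds with the roles of $(X,\theta_0)$ and $(Y,\theta)$ interchanged. This smallness is what drives the perturbation.

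I would then construct a bounded right inverse for $T$ at nearby $\theta$ by iteration: for $y \in [Y_0,Y_1]_\theta$ of unit norm, take a nearly optimal lift $G \in \mathcal{F}(Y_0,Y_1)$, set $x_0 := B(G(\theta_0)) \in [X_0,X_1]_{\theta_0}$ with $\|x_0\| \lesssim N$, and lift $x_0$ to $F_0 \in \mathcal{F}(X_0,X_1)$. The residual $H := G - TF_0$ vanishes at $\theta_0$ and satisfies $\|H\|_{\mathcal{F}} \lesssim MN+1$, so the Schwarz estimate yields $\|y - TF_0(\theta)\|_{[Y_0,Y_1]_\theta} \leq |\varphi_{\theta_0}(\theta)|(MN+1)$. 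Feeding the residual back into the same construction and summing a geometric series produces, as soon as $|\varphi_{\theta_0}(\theta)|(MN+1) < 1$, an element $x \in [X_0,X_1]_\theta$ with $Tx = y$ and $\|x\| \leq 2N\|y\|$, giving a bounded right inverse $R_\theta$ on a neighborhood of $\theta_0$.

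The main obstacle is injectivity, which I would handle by the symmetric dual argument. Given $x \in [X_0,X_1]_\theta$ with $Tx = 0$ and a near-optimal lift $F \in \mathcal{F}(X_0,X_1)$, the image $TF$ vanishes at $\theta$, so the Schwarz estimate with the conformal map $\varphi_\theta: S \to \mathbb{D}$ sending $\theta \to 0$ gives $\|(TF)(\theta_0)\|_{Y_{\theta_0}} \leq |\varphi_\theta(\theta_0)|\, M\|F\|_{\mathcal{F}}$, and then $\|F(\theta_0)\|_{X_{\theta_0}} \leq NM|\varphi_\theta(\theta_0)|\|F\|_{\mathcal{F}}$ via $B$. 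Lifting $F(\theta_0)$ to $F_1 \in \mathcal{F}(X_0,X_1)$ of comparable norm and subtracting, $F - F_1$ vanishes at $\theta_0$, so the Schwarz estimate on the $X$-side bounds $\|F(\theta) - F_1(\theta)\|_{X_\theta}$ by $|\varphi_{\theta_0}(\theta)| \|F - F_1\|_{\mathcal{F}}$. Combining these two bounds and exploiting that both $|\varphi_{\theta_0}(\theta)|$ and $|\varphi_\theta(\theta_0)|$ tend to $0$ as $\theta \to \theta_0$ forces $\|x\|_{X_\theta} \leq \kappa \|x\|_{X_\theta}$ with $\kappa < 1$, so $x = 0$. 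Together with surjectivity this shows that $T$ is an isomorphism from $[X_0,X_1]_\theta$ to $[Y_0,Y_1]_\theta$ on a full neighborhood of $\theta_0$, proving openness.
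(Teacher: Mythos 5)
The paper itself does not prove this proposition; it quotes Sneiberg's stability result and refers to Sneiberg's original article and to Kalton--Mitrea. So there is no in-paper argument to compare against, and the question is simply whether your proof is correct.

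Your argument is essentially the classical proof of Sneiberg's theorem and, modulo a few details that would need to be spelled out in a full write-up, it is sound. The Schwarz-type estimate is the heart of the matter: with $\varphi_{\theta_0}$ the conformal map $S\to\mathbb D$ vanishing at $\theta_0$ (explicitly $\varphi_{\theta_0}(z)=(\e^{\i\pi z}-\e^{\i\pi\theta_0})/(\e^{\i\pi z}-\e^{-\i\pi\theta_0})$), the singularity of $H/\varphi_{\theta_0}$ at $\theta_0$ is removable for Banach-space valued $H$, the function $H/\varphi_{\theta_0}$ stays in Calder\'on's class because $\abs{\varphi_{\theta_0}}=1$ on $\partial S$ and $\abs{\varphi_{\theta_0}}\to 1$ at $\pm\i\infty$, and the three-lines inequality then delivers $\|H(\theta)\|_{[Y_0,Y_1]_\theta}\le \abs{\varphi_{\theta_0}(\theta)}\,\|H\|_{\mathcal F}$. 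Your iterative construction of a right inverse then works exactly as you indicate: each step contracts the residual by the factor $\rho:=\abs{\varphi_{\theta_0}(\theta)}(1+MN)(1+\eps)^2<1$ for $\theta$ close to $\theta_0$, and the geometric series yields $x\in[X_0,X_1]_\theta$ with $Tx=y$ and $\|x\|\lesssim N\|y\|$. Your injectivity step is a clean variant of the usual lower-bound argument (which in full strength replaces the hypothesis $Tx=0$ by general $x$ and proves $\|Tx\|\gtrsim\|x\|$, thereby also establishing that the norm of $T^{-1}$ depends only on $M$, $N$ and $\abs{\theta-\theta_0}$): by splitting $F=F_1+(F-F_1)$ with $F_1$ a near-optimal lift of the small vector $F(\theta_0)$, you push $\|x\|$ below itself for $\theta$ close enough to $\theta_0$, and bijectivity plus the open mapping theorem give the isomorphism on a full neighbourhood, hence openness.

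Two small points worth making explicit in a final version. First, you implicitly use that $T$ is bounded $[X_0,X_1]_\theta\to[Y_0,Y_1]_\theta$ for every $\theta$; this is of course the basic interpolation property, and it is what makes the open mapping theorem applicable once bijectivity is established. Second, when you divide $H$ by $\varphi_{\theta_0}$ you should note that the boundary traces $t\mapsto (H/\varphi_{\theta_0})(j+\i t)$ still satisfy whatever decay at $\pm\infty$ your chosen definition of Calder\'on's class $\mathcal F$ imposes, since $\abs{\varphi_{\theta_0}(j+\i t)}\equiv 1$; otherwise the identity $\|H/\varphi_{\theta_0}\|_{\mathcal F}=\|H\|_{\mathcal F}$, and with it the whole Schwarz estimate, could be in question.
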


In order to apply this result, put $(X_0, X_1) := (\H_D^{2/3}, \H_D^{4/3})$ and $(Y_0, Y_1):= (X_1^*,X_0^*)$. By Theorem~\ref{Thm: Main interpolation theorem} the complex interpolation spaces induced by the couple $(X_0,X_1)$ are
\begin{align}
\label{Eq: Interpolation X0 X1}
 \big[X_0,X_1 \big]_\theta = \H_D^{2 \alpha} \qquad (\theta \in [0,1], \, \alpha = \tfrac{1+\theta}{3}).
\end{align}
In particular, the smallest space $\H_D^{4/3}$ is dense in $\H_D^{2 \alpha}$ for each $\alpha \in [\frac{1}{3}, \frac{2}{3}]$, cf.\ \cite[Thm.\ 4.2.2]{Bergh-Loefstroem}. For these values of $\alpha$ the anti dual spaces $(\H_D^{2 \alpha})^*$ can be naturally embedded into $(\H_D^{4/3})^*$ via restriction of functionals. In virtue of these embeddings $(Y_0, Y_1)$ is an interpolation couple and due to \eqref{Eq: Interpolation X0 X1}, reflexivity of $X_0$, cf.\ Corollary~\ref{Cor: HDs are reflexive}, and duality for complex interpolation \cite[Cor.\ ~4.5.2]{Bergh-Loefstroem} the induced interpolation spaces are
\begin{align*}
 \big[Y_0,Y_1\big]_\theta = (\H_D^{2-2\alpha})^* \qquad (\theta \in [0,1], \, \alpha = \tfrac{1+\theta}{3}).
\end{align*}

Lemma~\ref{Lem: Second Interpolation estimates for j} asserts that \emph{the duality map} $u \mapsto \j(u, \argdot)$ extends by density from $\V$ to a bounded operator $\J: X_0 \to Y_0$ which, owing to the symmetry of $\j$, maps $X_1$ boundedly into $Y_1$. Hence, by Sneiberg's stability result
\begin{align*}
 I:= \big \{\alpha \in (\tfrac{1}{3},\tfrac{2}{3}) \big| \text{ $\J: \H_D^{2 \alpha} \to (\H_D^{2-2\alpha})^*$ is an isomorphism} \big \}
\end{align*}
is an open subset of $(\frac{1}{3},\frac{2}{3})$. Thanks to the Lax-Milgram lemma $\frac{1}{2} \in I$. Hence, there exists $\eps_0 \in (0, \frac{1}{6})$ such that $[\frac{1}{2}-\eps_0, \frac{1}{2}+\eps_0] \subseteq I$.

Now, let $\alpha \in [\frac{1}{2},\frac{1}{2} + \eps_0]$ and take $u \in \dom((1-\Delta_\V)^\alpha) \subseteq \V$. A reformulation of Lemma~\ref{Lem: First Interpolation estimates for j} is that $\J u=\j(u,\cdot)$ is a bounded conjugate linear functional on $\H_D^{2-2\alpha}$ with norm not exceeding the graph norm of $u$. Due to $\alpha \in I$ it follows 
\begin{align*}
 \|u\|_{\H_D^{2\alpha}} \lesssim \|\J u\|_{(\H_D^{2-2\alpha})^*} \lesssim  \|u\|_{\dom((1-\Delta_\V)^\alpha)},
\end{align*}
i.e.\ $\dom((1-\Delta_\V)^\alpha) \subseteq \H_D^{2 \alpha}$ with continuous inclusion. To see that for $\alpha$ close enough to $\frac{1}{2}$ we have in fact equality, first recall from \eqref{Eq: Fractional power domains under square root} that $\H_D^{2 \alpha} = \dom((1-\Delta_\V)^\alpha)$ holds if $\alpha \in (\frac{1}{4},\frac{1}{2}]$. Combining this with the previously established continuous inclusion we see that
\begin{align*}
 \Id: \dom((1-\Delta_\V)^\alpha) \to \H_D^{2 \alpha}
\end{align*}
is bounded if $\alpha \in (\frac{1}{4}, \frac{1}{2} + \eps_0]$ and an isomorphism if $\alpha \in (\frac{1}{4},\frac{1}{2}]$. Since the domains of the fractional powers of $1-\Delta_\V$ interpolate according to Proposition~\ref{Prop: BIP Lemma}, we can re-apply Proposition~\ref{Prop: Sneiberg} to obtain an $\eps < \eps_0$ such that $\Id: \dom((1-\Delta_\V)^\alpha) \to \H_D^{2 \alpha}$ is an isomorphism for all $\alpha \in [\frac{1}{2}, \frac{1}{2} + \eps)$. This establishes our ultimate goal \eqref{Eq: Last goal} and thereby completes the proof of Theorem~\ref{Thm: Main Result}. \hfill $\square$

\section{Elliptic Systems}
\label{Sec: Elliptic Systems}

\noindent In this section we extend Theorem~\ref{Thm: Kato} to coupled systems of elliptic operators on $\Omega$ of the form
\begin{align*}
\begin{array}{c c c}
 (\SysA u)_1 &=& - \displaystyle{\sum_{m,n = 1}^d \sum_{k=1}^N \partial_m(\mu_{m,n}^{1,k} \partial_n u_k)} \\
 \vdots& \vdots &\vdots \\
 (\SysA u)_N &=&- \displaystyle{\sum_{m,n = 1}^d \sum_{k=1}^N \partial_m(\mu_{m,n}^{N,k} \partial_n u_k)}
\end{array}
\end{align*}
with coefficients $\mu_{m,n}^{j,k} \in \L^\infty(\Omega)$ and mixed boundary conditions with possibly different Dirichlet parts $D_j$ for each component $u_j$. We assume that each pair $(\Omega, D_j)$ satisfies Assumption~\ref{Ass: General geometric assumption on Omega}, i.e.\ that the following holds.

\begin{assumption}
\label{Ass: General geometric assumption for Systems}
\begin{enumerate}
\itemsep3pt
 \item The domain $\Omega \subseteq \IR^d$, $d \geq 2$, is a non-empty, bounded $d$-set.
 \item The Dirichlet parts $D_j$, $1\leq j \leq N$, are closed subsets of $\bd \Omega$ and each of them is either empty or a $(d-1)$-set. 
 \item Around every point of the closure of $\bd \Omega \setminus \bigcap_{j=1}^N D_j = \bigcup_{j=1}^N \bd \Omega \setminus D_j$ there exists a bi-Lipschitz coordinate chart as in Assumption~\ref{Ass: General geometric assumption on Omega}.
\end{enumerate}
\end{assumption}

To define an appropriate form domain for $\SysA$ first take $\V_j$, $1 \leq j \leq N$, as the closure of $\C_{D_j}^\infty(\Omega)$ under the norm $\|u_j\|_{\V_j} := (\int_\Omega \abs{u_j}^2 +~ \abs{\nabla u_j}^2)^{1/2}$ and then put $\SysV:= \prod_{j=1}^N \V_j = \prod_{j=1}^N \H_{D_j}^1(\Omega)$. Here, the second equality is due to Corollary~\ref{Cor: Form domain equals HD1}. Similar to Subsection~\ref{Subsec: The Elliptic Operator} we identify $\SysA$ with the maximal accretive operator on $\L^2(\Omega)^N$ associated to the elliptic sesquilinear form
\begin{align*}
 \a: \SysV \times \SysV \to \IC, \quad \a(u,v) = \sum_{m,n = 1}^d \sum_{j,k=1}^N \int_\Omega \mu_{m,n}^{j,k}\partial_n u_k \cdot \partial_m \cl{v_j},
\end{align*}
and make the following assumption.

\begin{assumption}
\label{Ass: Ellipticity for systems}
There exists some $\lambda > 0$ such that the following G\aa{}rding inequality holds:
\begin{align*}
 \Re (\a(u,u)) \geq \lambda \sum_{j=1}^N \|\nabla u_j\|_{\L^2(\Omega; \IC^d)}^2 \qquad (u \in \SysV)
\end{align*}
\end{assumption}

 Here, and throughout, we write $u_j$, $1 \leq j \leq N$, for the component functions of $u \in \L^2(\Omega)^N$. This setup for elliptic systems has been previously studied e.g.\ in \cite{Rehberg-Jonsson}. For a survey on regularity results for elliptic systems with rough coefficients, see e.g.\ \cite{Mazya-survey}.

For $1 \leq j \leq N$ let $\Delta_{\V_j}$ be the weak Laplacian with form domain $\V_j$, cf.\ Subsection~\ref{Subsec: The Elliptic Operator}. For the choice $\mu_{m,n}^{j,k} = \delta_{m,n} \delta_{j,k}$, where $\delta$ is Kronecker's delta, the sesquilinear form $\a$ becomes
\begin{align*}
 \SysV \times \SysV \to \IC, \quad (u,v) \mapsto \sum_{j=1}^N \int_\Omega \nabla u_j \cdot \nabla \cl{v_j}
\end{align*}
and it can easily be checked that the associated operator is the negative componentwise Laplacian
\begin{align*}
 -\Delta_\SysV = \mathrm{diag}(-\Delta_{\V_1},\ldots, -\Delta_{\V_N}) \quad \text{on} \quad \dom(-\Delta_\SysV) = \prod_{j=1}^N \dom(-\Delta_{\V_j}).
\end{align*}

The subsequent theorem solves the Kato Square Root Problem for the general coupled elliptic system $\SysA$. The proof relies again on the reduction results in \cite{Laplace-Extrapolation-Implies-Kato}. The key observation is the following decoupling property: It suffices to work with the \emph{diagonal} system $-\Delta_\SysV$ instead of the general \emph{coupled} system $\SysA$. But all properties of the system $-\Delta_\SysV$ can be obtained from the previous sections by coordinatewise considerations.

\begin{theorem}
\label{Thm: Kato for systems}
Under Assumption~\ref{Ass: General geometric assumption for Systems} the domain of $\SysA^{1/2}$ coincides with the form domain $\SysV$ and
\begin{align*}
 \|\SysA^{1/2}u\|_{\L^2(\Omega)^N} \simeq \|(\nabla u_j)_{j=1}^N\|_{\L^2(\Omega; \IC^d)^N}  \qquad (u \in \dom(\SysA^{1/2})).
\end{align*}
\end{theorem}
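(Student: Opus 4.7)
The plan is to invoke the systems version of the reduction theorem of \cite{Laplace-Extrapolation-Implies-Kato}, which reduces the Kato square root property for the coupled operator $\SysA$ to extrapolation properties of the \emph{diagonal} auxiliary Laplacian $-\Delta = \mathrm{diag}(-\Delta_1, \ldots, -\Delta_N)$ on $\L^2(\Omega)^N$. More precisely, it suffices to verify the conditions ($d$), ($d-1$), ($\SysV$), ($\alpha$) and (E) of Section~\ref{Sec: Main results} with $\L^2(\Omega)$, $\V$ and $-\Delta$ replaced by $\L^2(\Omega)^N$, $\SysV$ and the above diagonal Laplacian. The crucial point is that each of these conditions decouples completely, so that it can be checked componentwise using the scalar theory already established.

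First, the geometric conditions ($d$) and ($d-1$) depend only on $\Omega$ and $\bd \Omega$ and are obtained verbatim as in Section~\ref{Sec: Main results}: $\Omega$ is a $d$-set by plumpness, and $\bd \Omega$ is a $(d-1)$-set because the union $\bigcup_{j=1}^N \cl{\bd \Omega \setminus D_j}$ is covered by finitely many bi-Lipschitz charts by Assumption~\ref{Ass: General geometric assumption for Systems}(3), while its complement in $\bd\Omega$ lies in $\bigcap_{j=1}^N D_j$ and is therefore again a $(d-1)$-set (or empty) by Assumption~\ref{Ass: General geometric assumption for Systems}(2) and Remark~\ref{Rem: Remarks on d-1 set}. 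Condition ($\SysV$) holds because $\SysV = \prod_{j=1}^N \H_{D_j}^1(\Omega)$ is a product of scalar form domains that each satisfy it: closedness, inclusion of $\C_c^\infty(\Omega)^N$, stability under multiplication by $\varphi \in \C_c^\infty(\IR^d)$ acting diagonally, and the $\H^1$ extension property (simply apply the scalar extension operator component by component) are all inherited from the scalar case.

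For ($\alpha$) and (E), the argument is again componentwise. The interpolation ($\alpha$) follows from the product formula for complex interpolation \cite[Sec.\ 1.18.1]{Triebel} and from the scalar identity $[\L^2(\Omega), \H_{D_j}^1(\Omega)]_\alpha = \H^\alpha(\Omega)$ for $\alpha < 1/2$ established in Theorem~\ref{Thm: Main interpolation theorem}; this yields
\begin{align*}
  \big[\L^2(\Omega)^N, \SysV\big]_\alpha = \prod_{j=1}^N \big[\L^2(\Omega), \H_{D_j}^1(\Omega)\big]_\alpha = \H^\alpha(\Omega)^N
\end{align*}
for every $\alpha \in (0, \tfrac{1}{2})$. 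For condition (E), one applies Theorem~\ref{Thm: Main Result} to each scalar operator $-\Delta_j$ associated with the pair $(\Omega, D_j)$ to obtain a corresponding $\eps_j \in (0, \tfrac{1}{4})$. Setting $\eps := \min_{1 \leq j \leq N} \eps_j$ and using the diagonal structure $\dom((-\Delta)^{\beta}) = \prod_{j=1}^N \dom((-\Delta_j)^\beta)$ for $\beta > 0$, we obtain $\dom((-\Delta)^{1/2 + \alpha/2}) \subseteq \prod_{j=1}^N \H_{D_j}^{1+\alpha}(\Omega) \subseteq \H^{1+\alpha}(\Omega)^N$ with continuous inclusion for every $\alpha \in (0, 2\eps)$.

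The main ``obstacle'' is essentially administrative rather than mathematical: one has to make sure that a common $\eps > 0$ works for all components simultaneously, which is automatic since $N$ is finite. Once the five conditions are verified, the reduction theorem from \cite{Laplace-Extrapolation-Implies-Kato} immediately delivers $\dom(\SysA^{1/2}) = \SysV$ together with the norm equivalence claimed in the theorem.
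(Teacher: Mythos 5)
Your proposal is correct and follows essentially the same route as the paper: invoke the reduction theorem from \cite{Laplace-Extrapolation-Implies-Kato} for the diagonal system $-\Delta = \mathrm{diag}(-\Delta_1,\ldots,-\Delta_N)$, verify the geometric and form-domain conditions componentwise, and use the diagonal structure $\dom((-\Delta)^\alpha) = \prod_j \dom((-\Delta_j)^\alpha)$ together with Theorem~\ref{Thm: Main Result} to obtain conditions ($\alpha$') and (E') for a common sufficiently small exponent. The only cosmetic differences are that you spell out the product rule for complex interpolation and the minimum-over-$j$ choice of $\eps$, which the paper leaves as ``for $\alpha>0$ sufficiently small''.
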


\begin{proof}
We have stated Theorem~\ref{Thm: Kato from Laplace} for a single equation only but in fact this result is proved in \cite{Laplace-Extrapolation-Implies-Kato} for elliptic systems $\SysA$ as defined above. The assumptions are the same upon the obvious modifications such as replacing $\V$ by a function space that contains $\C_c^\infty(\Omega)^N$ and that is closed under the norm $u \mapsto \sum_{j=1}^N (\int_\Omega \abs{u}^2 +~ \abs{\nabla u}^2)^{1/2}$, and $\Delta_\V$ by $\Delta_\SysV$. Since by Assumption~\ref{Ass: General geometric assumption for Systems} the domain $\Omega$ is a $d$-set and $\partial \Omega$ is a $(d-1)$ set, cf.\ Remark~\ref{Rem: Remark on geometric assumptions}, this theorem then gives $\dom(\SysA^{1/2}) = \SysV$ with the inhomogeneous estimate 
\begin{align}
\label{Eq: Inhomogeneous estimate Systems}
 \|(1+\SysA)^{1/2}u\|_{\L^2(\Omega)^N} \simeq \|(u_j)_{j=1}^N\|_{\L^2(\Omega)^N} + \|(\nabla u_j)_{j=1}^N\|_{\L^2(\Omega; \IC^d)^N}  \qquad (u \in \dom(\SysA^{1/2})),
\end{align}
provided we can take care of the following.
\begin{enumerate}
\itemsep3pt
 \item[($\SysV$)] The form domain is stable under multiplication by smooth scalar valued functions in the sense that $\varphi \SysV \subseteq \SysV$ holds for each $\varphi \in \C_c^\infty(\IR^d)$. Moreover, it has the \emph{$\H^1$extension property}, i.e.\ there exists a bounded operator $\E_{\SysV}: \SysV \to \H^1(\IR^d)^N$ such that $\E_{\V} u = u$ holds a.e.\ on $\Omega$ for each $u \in \SysV$.
 \item[($\alpha$')] There exists an $\alpha \in (0, 1)$ such that the complex interpolation space $[\L^2(\Omega)^N, \SysV]_\alpha$ coincides with $\H^\alpha(\Omega)^N$ up to equivalent norms.
 \item[(E')] For the \emph{same} $\alpha$ as above $\dom((-\Delta_\SysV)^{1/2 + \alpha/2}) \subseteq \H^{1+\alpha}(\Omega)^N$ holds with continuous inclusion.
\end{enumerate}
In Section~\ref{Sec: Main results} we have seen that for each $1 \leq j \leq N$ the space $\V_j$ is stable under multiplication by smooth scalar valued functions and that it has the $\H^1$ extension property. Thus, ($\SysV$) follows. To establish ($\alpha$') and (E') first note that if $\Re(\alpha) > 0$ then the Balakrishnan Representation \eqref{Balakrishnan Representation} readily yields
\begin{align*}
 (-\Delta_\SysV)^\alpha = \mathrm{diag}((-\Delta_{\V_1})^\alpha,\ldots, (-\Delta_{\V_N})^\alpha) \quad \text{on} \quad \dom((-\Delta_\SysV)^\alpha) = \prod_{j=1}^N \dom((-\Delta_{\V_j})^\alpha).
\end{align*}
Thanks to Theorem~\ref{Thm: Main Result} each $-\Delta_{\V_j}$ satisfies ($\alpha$) and (E) from Section~\ref{Sec: Main results} not only for a single $\alpha$ but for all $\alpha$ in some open interval with lower endpoint $0$. Hence, ($\alpha$) and (E) are met simultaneously by all $-\Delta_{\V_j}$, $1 \leq j \leq N$, if $\alpha>0$ is sufficiently small. This gives ($\alpha$') and (E').

Finally, the required homogeneous estimate can be deduced from \eqref{Eq: Inhomogeneous estimate Systems} by literally the same arguments as in the proof of Lemma~\ref{Lem: Homogeneous Kato is easy}. Note that the results in \cite[Sec.~6]{Hardy-Poincare} carry over to $\IC^N$-valued spaces word by word.
\end{proof}

\begin{bibdiv}
\begin{biblist}

\bib{Adams-Hedberg}{book}{
      author={Adams, D.~R.},
      author={Hedberg, L.~I.},
       title={Function {S}paces and {P}otential {T}heory},
      series={Grundlehren der Mathematischen Wissenschaften},
   publisher={Springer-Verlag},
     address={Berlin},
        date={1996},
      volume={314},
        ISBN={3-540-57060-8},
}

\bib{ABHR}{article}{
      author={Auscher, P.},
      author={Badr, N.},
      author={{Haller-Dintelmann}, R.},
      author={Rehberg, J.},
       title={The square root problem for second order, divergence form
  operators with mixed boundary conditions on ${L}^p$},
     journal={J. Evol. Equ.},
        date={to appear},
}

\bib{Kato-Square-Root-Proof}{article}{
      author={Auscher, P.},
      author={Hofmann, S.},
      author={Lacey, M.},
      author={M\textsuperscript{c}Intosh, A.},
      author={Tchamitchian, P.},
       title={The solution of the {K}ato square root problem for second order
  elliptic operators on {$\mathbb{R}^n$}},
        date={2002},
     journal={Ann. of Math. (2)},
      volume={156},
      number={2},
       pages={633\ndash 654},
}

\bib{Kato-Systems-Proof}{article}{
      author={Auscher, P.},
      author={Hofmann, S.},
      author={M\textsuperscript{c}Intosh, A.},
      author={Tchamitchian, P.},
       title={The {K}ato square root problem for higher order elliptic
  operators and systems on {$\mathbb{R}^n$}},
        date={2001},
     journal={J. Evol. Equ.},
      volume={1},
      number={4},
       pages={361\ndash 385},
}

\bib{Kato-homogeneousBoundary}{article}{
      author={Auscher, P.},
      author={Tchamitchian, P.},
       title={Square roots of elliptic second order divergence operators on
  strongly {L}ip-schitz domains: {$L^2$} theory},
        date={2003},
        ISSN={0021-7670},
     journal={J. Anal. Math.},
      volume={90},
       pages={1\ndash 12},
}

\bib{AKM}{article}{
      author={Axelsson, A.},
      author={Keith, S.},
      author={M\textsuperscript{c}Intosh, A.},
       title={The {K}ato square root problem for mixed boundary value
  problems},
        date={2006},
     journal={J. London Math. Soc. (2)},
      volume={74},
      number={1},
       pages={113\ndash 130},
}

\bib{Bauer-Masstheorie}{book}{
      author={Bauer, H.},
       title={Measure and {I}ntegration {T}heory},
      series={de Gruyter Studies in Mathematics},
   publisher={Walter de Gruyter \& Co.,},
     address={Berlin},
        date={2001},
      volume={26},
        ISBN={3-11-016719-0},
}

\bib{Bergh-Loefstroem}{book}{
      author={Bergh, J.},
      author={L{\"o}fstr{\"o}m, J.},
       title={Interpolation {S}paces. {A}n {I}ntroduction},
   publisher={Springer-Verlag},
     address={Berlin},
        date={1976},
      series={Grundlehren der Mathematischen Wissenschaften},
     volume ={223},
}

\bib{Sobolev-Hitchhiker}{article}{
      author={Di~Nezza, E.},
      author={Palatucci, G.},
      author={Valdinoci, E.},
       title={Hitchhiker's guide to the fractional {S}obolev spaces},
        date={2012},
        ISSN={0007-4497},
     journal={Bull. Sci. Math.},
      volume={136},
      number={5},
       pages={521\ndash 573},
}

\bib{Edmunds-Hardy}{article}{
      author={Edmunds, D.~E.},
      author={{Hurri-Syrj\"anen}, R.},
      author={V\"ah\"akangas, A.~V.},
       title={Fractional {H}ardy-type inequalities in domains with uniformly fat complement},
     journal={Proc. Amer. Math. Soc.},
        date={2014},
      volume={142},
      number={3},
       pages={897\ndash907},
}

\bib{Hardy-Poincare}{article}{
      author={Egert, M.},
      author={{Haller-Dintelmann}, R.},
      author={Rehberg, J.},
      title ={{H}ardy's inequality for functions vanishing on a part of the boundary},
      eprint={http://arxiv.org/abs/1405.6167},
}
\bib{Laplace-Extrapolation-Implies-Kato}{article}{
      author={Egert, M.},
      author={{Haller-Dintelmann}, R.},
      author={Tolksdorf, P.},
      title ={The {K}ato {S}quare {R}oot {P}roblem follows from an extrapolation property of the {L}aplacian},
      eprint={http://arxiv.org/abs/1311.0301}
}

\bib{Federer}{book}{
      author={Federer, H.},
       title={Geometric {M}easure {T}heory},
      series={Die Grundlehren der mathematischen Wissenschaften},
   publisher={Springer-Verlag New York Inc., New York},
        date={1969},
      volume={153},
}

\bib{Gesztesy-Potentials}{article}{
      author={Gesztesy, F.},
      author={Hofmann, S.},
      author={Nichols, R.},
       title={On stability of square root domains for non-self-adjoint
  operators under additive perturbation},
      eprint={http://arxiv.org/abs/1212.5661},
}

\bib{Griepentrog-InterpolationOnGroger}{article}{
      author={Griepentrog, J.~A.},
      author={Gr{\"o}ger, K.},
      author={Kaiser, H.~C.},
      author={Rehberg, J.},
       title={Interpolation for function spaces related to mixed boundary value
  problems},
        date={2002},
        ISSN={0025-584X},
     journal={Math. Nachr.},
      volume={241},
       pages={110\ndash 120},
}

\bib{Grisvard-IntermediateSpaces}{article}{
      author={Grisvard, P.},
       title={Espaces interm\'ediaires entre espaces de {S}obolev avec poids},
        date={1963},
     journal={Ann. Scuola Norm. Sup. Pisa (3)},
      volume={17},
       pages={255\ndash 296},
}

\bib{Grisvard-Interpolation}{article}{
      author={Grisvard, P.},
       title={\'{E}quations diff\'erentielles abstraites},
        date={1969},
        ISSN={0012-9593},
     journal={Ann. Sci. \'Ecole Norm. Sup. (4)},
      volume={2},
       pages={311\ndash 395},
}

\bib{Haase}{book}{
      author={Haase, M.},
       title={The {F}unctional {C}alculus for {S}ectorial {O}perators},
      series={Operator Theory: Advances and Applications},
   publisher={Birkh{\"a}user Verlag},
     address={Basel},
        date={2006},
      volume={169},
}

\bib{Rehberg-Jonsson}{article}{
      author={{Haller-Dintelmann}, R.},
      author={Jonsson, A.},
      author={Knees, D.},
      author={Rehberg, J.},
       title={Elliptic and parabolic regularity for second order divergence
  operators with mixed boundary conditions},
      eprint={http://arxiv.org/abs/1310.3679},
}

\bib{Haller-Rehberg}{article}{
      author={{Haller-Dintelmann}, R.},
      author={Rehberg, J.},
       title={Maximal parabolic regularity for divergence operators including
  mixed boundary conditions},
        date={2009},
        ISSN={0022-0396},
     journal={J. Differential Equations},
      volume={247},
      number={5},
       pages={1354\ndash 1396},
}

\bib{Boundary-Visibility}{article}{
      author={Ihnatsyeva, L.},
      author={Lehrb\"ack, J.},
      author={Tuominen, H.},
      author={V\"ah\"akangas, A.~V.},
       title={Fractional {H}ardy inequalities and visibility of the boundary},
      eprint={http://arxiv.org/abs/1305.4616},
}

\bib{Hardy-Triebel-LizorkinI}{article}{
      author={Ihnatsyeva, L.},
      author={V\"ah\"akangas, A.~V.},
       title={Hardy inequalities in {T}riebel-{L}izorkin spaces},
      eprint={http://arxiv.org/abs/1209.5840},
}

\bib{Hardy-Triebel-LizorkinII}{article}{
      author={Ihnatsyeva, L.},
      author={V\"ah\"akangas, A.~V.},
       title={Hardy inequalities in {T}riebel-{L}izorkin spaces {II}. {A}ikawa
  dimension},
     journal={Ann. Mat. Pura Appl.},
        year={2013},
      number={4},
}

\bib{Jonsson-Wallin}{article}{
      author={Jonsson, A.},
      author={Wallin, H.},
       title={Function spaces on subsets of {${\IR}^n$}},
        date={1984},
        ISSN={0275-7214},
     journal={Math. Rep.},
      volume={2},
      number={1},
}

\bib{Sneiberg-KaltonMitrea}{article}{
      author={Kalton, N.},
      author={Mitrea, M.},
       title={Stability results on interpolation scales of quasi-{B}anach
  spaces and applications},
        date={1998},
        ISSN={0002-9947},
     journal={Trans. Amer. Math. Soc.},
      volume={350},
      number={10},
       pages={3903\ndash 3922},
}

\bib{Kato}{book}{
      author={Kato, T.},
       title={Perturbation {T}heory for {L}inear {O}perators},
      series={Classics in Mathematics},
   publisher={Springer-Verlag},
     address={Berlin},
        date={1995},
        ISBN={3-540-58661-X},
}

\bib{Lehrback-PointwiseHardy}{article}{
      author={Lehrb{\"a}ck, J.},
       title={Pointwise {H}ardy inequalities and uniformly fat sets},
        date={2008},
        ISSN={0002-9939},
     journal={Proc. Amer. Math. Soc.},
      volume={136},
      number={6},
       pages={2193\ndash 2200},
}

\bib{Lehrback-Boundarysize}{article}{
      author={Lehrb{\"a}ck, J.},
       title={Weighted {H}ardy inequalities and the size of the boundary},
        date={2008},
        ISSN={0025-2611},
     journal={Manuscripta Math.},
      volume={127},
      number={2},
       pages={249\ndash 273},
}

\bib{Lewis-FatSets}{article}{
      author={Lewis, J.~L.},
       title={Uniformly fat sets},
        date={1988},
        ISSN={0002-9947},
     journal={Trans. Amer. Math. Soc.},
      volume={308},
      number={1},
       pages={177\ndash 196},
}

\bib{Counterexample_Lions}{article}{
      author={Lions, J.~L.},
       title={Espaces d'interpolation et domaines de puissances fractionnaires
  d'op\'erateurs},
        date={1962},
     journal={J. Math. Soc. Japan},
      volume={14},
       pages={233\ndash 241},
}

\bib{Lunardi-Interpolation}{book}{
      author={Lunardi, A.},
       title={Interpolation {T}heory},
      series={Scuola Normale Superiore di Pisa (Nuova Serie).},
   publisher={Edizioni della Normale, Pisa},
        date={2009},
        ISBN={978-88-7642-342-0; 88-7642-342-0},
}

\bib{Mazya-survey}{article}{
    author = {Mazya, V.},
    author = {Shaposhnikova, T.},
     title = {Recent progress in elliptic equations and systems of arbitrary
              order with rough coefficients in {L}ipschitz domains},
   journal = {Bull. Math. Sci.},
    volume = {1},
      year = {2011},
    number = {1},
     pages = {33\ndash 77},
}

\bib{McIntosh-KatoSurvey}{proceedings}{
     author={M\textsuperscript{c}Intosh, A.},
      title={The square root problem for elliptic operators -- a survey},
     series={Lecture Notes in Mathematics},
     volume={1450},
  booktitle={Proceedings of a Conference and a Symposium held in Tokyo, Japan, July 3 \ndash 9, 1989},
  publisher={Springer-Verlag},
    address={Berlin},
       year={1990},
      pages={122-140},
}

\bib{Meyries-Veraar-SharpEmbeddings}{article}{
      author={Meyries, M.},
      author={Veraar, M.},
       title={Sharp embedding results for spaces of smooth functions with power
  weights},
        date={2012},
        ISSN={0039-3223},
     journal={Studia Math.},
      volume={208},
      number={3},
       pages={257\ndash 293},
}

\bib{Mitrea-PoissonMixed}{article}{
      author={Mitrea, I.},
      author={Mitrea, M.},
       title={The {P}oisson problem with mixed boundary conditions in {S}obolev
  and {B}esov spaces in non-smooth domains},
        date={2007},
        ISSN={0002-9947},
     journal={Trans. Amer. Math. Soc.},
      volume={359},
      number={9},
       pages={4143\ndash 4182 (electronic)},
}

\bib{Ouhabaz}{book}{
      author={Ouhabaz, E. M.},
       title={Analysis of {H}eat {E}quations on {D}omains},
      series={London Mathematical Society Monographs Series},
      volume={31},
   publisher={Princeton University Press},
     address={Princeton, NJ},
        year={2005},
}

\bib{Pryde-MixedBoundary}{article}{
      author={Pryde, A.~J.},
       title={Second order elliptic equations with mixed boundary conditions},
        date={1981},
        ISSN={0022-247X},
     journal={J. Math. Anal. Appl.},
      volume={80},
      number={1},
       pages={203\ndash 244},
}

\bib{Rogers-Article}{article}{
      author={Rogers, L.~G.},
       title={Degree-independent {S}obolev extension on locally uniform
  domains},
        date={2006},
     journal={J. Funct. Anal.},
      volume={235},
      number={2},
       pages={619\ndash 665},
}

\bib{Shamir-Counterexample}{article}{
      author={Shamir, E.},
       title={Regularization of mixed second-order elliptic problems},
        date={1968},
        ISSN={0021-2172},
     journal={Israel J. Math.},
      volume={6},
       pages={150\ndash 168},
}

\bib{Sneiberg-Original}{article}{
      author={Sneiberg, I.},
       title={Spectral properties of linear operators in interpolation families
  of {B}anach spaces},
        date={1974},
        ISSN={0542-9994},
     journal={Mat. Issled.},
      volume={9},
      number={2(32)},
       pages={214\ndash 229, 254\ndash 255},
}

\bib{Stein}{book}{
      author={Stein, E.~M.},
       title={Singular {I}ntegrals and {D}ifferentiability {P}roperties of
  {F}unctions},
      series={Princeton Mathematical Series},
   publisher={Princeton University Press},
     address={Princeton, NJ},
        date={1970},
      volume={30},
}

\bib{Triebel}{book}{
      author={Triebel, H.},
       title={Interpolation {T}heory, {F}unction {S}paces, {D}ifferential
  {O}perators},
      series={North-Holland Mathematical Library},
   publisher={North-Holland Publishing Co.,},
     address={Amsterdam},
        date={1978},
      volume={18},
        ISBN={0-7204-0710-9},
}

\bib{Triebel-Wavelets}{book}{
      author={Triebel, H.},
       title={Function {S}paces and {W}avelets on {D}omains},
      series={EMS Tracts in Mathematics},
   publisher={European Mathematical Society (EMS), Z\"urich},
        date={2008},
      volume={7},
        ISBN={978-3-03719-019-7},
}

\bib{Dyda-Vahakangas}{article}{
      author={V\"ah\"akangas, A.~V.},
      author={Dyda, B.},
       title={A framework for fractional {H}ardy inequalities},
     journal={Ann. Acad. Sci. Fenn. Math.},
        date={accepted for publication},   
}

\bib{Vaisala}{article}{
    author = {V{\"a}is{\"a}l{\"a}, Jussi},
     title = {Uniform domains},
   journal = {Tohoku Math. J. (2)},
    volume = {40},
      year = {1988},
    number = {1},
     pages = {101--118},
}

\end{biblist}
\end{bibdiv}

\end{document}